\documentclass[preprint]{elsarticle}

\usepackage{amssymb, amsmath, amsthm, amscd, url}
\usepackage{graphicx}
\usepackage{graphicx}
\input epsf

\usepackage{subfigure}
\bibliographystyle{plain}

\begin{document}
\newcommand{\tsum}{$T=T_1+_{\Delta} T_2$}
\newcommand{\bdy}{\partial}
\newcommand{\thh}{T(1/2, -1/2)}
\newcommand{\cont}{\subset}
\newcommand{\bpann}{$\partial \Delta$-anannular}
\newcommand{\Int}{{\rm Int}}

\title{Hyperbolicity of arborescent tangles and arborescent links}
\author{Kathleen Reif Volz}
\ead{kathleenvolz@augustana.edu}
\address{Department of Mathematics, University of Iowa, Iowa City, IA 52242, U.S.A.}
\address{Current Address:  Mathematics/Computer Science Department, Augustana College \\ 639  38th Street, Rock Island, IL 61201, U.S.A. }

\begin{keyword}
Arborescent tangles \sep arborescent links \sep hyperbolic manifolds

\MSC{\em Primary 57N10}
\end{keyword}

\begin{abstract}
In this paper, we study the hyperbolicity of arborescent tangles and arborescent links.  We will explicitly determine all essential surfaces in arborescent tangle complements with non-negative Euler characteristic, and show that given an arborescent tangle $T$, the complement $X(T)$ is non-hyperbolic if and only if $T$ is a rational tangle, $T=Q_m \ast T'$ for some $m \geq 1$, or $T$ contains $Q_n$ for some $n \geq 2$.  We use these results to prove a theorem of Bonahon and Seibenmann which says that a large arborescent link $L$ is non-hyperbolic if and only if it contains $Q_2$.
\end{abstract}

\newtheorem{thm}{Theorem}[section]
\newtheorem{lem}[thm]{Lemma}
\newtheorem{con}[thm]{Conjecture}
\newtheorem{cor}[thm]{Corollary}
\newtheorem{prop}[thm]{Proposition}
\newtheorem{addend}[thm]{Addendum}
\newtheorem{defin}[thm]{Definition}
\newtheorem{example}[thm]{Example}
\newtheorem*{claim}{Claim}  
\newtheorem*{rem}{Remark}
\newtheorem*{ntbd}{Note (to be deleted)}

\maketitle

\section{Introduction}
\label{section: Intro}

Arborescent tangles were defined by Conway \cite{Conway} as tangles which can be obtained from the trivial tangles by certain operations.  He used these to study a class of links which he called algebraic links.  His purpose was to generalize 2-bridge links, also called rational links.
Rational tangles make up the most basic class of such tangles; every rational tangle is associated with a unique rational number, $p/q$, or $\infty$, and Conway was the first to note that two rational tangles are isotopic if and only if they correspond to the same rational number.  Later Gabai named Conway's algebraic links \emph{arborescent links} because the name algebraic links had already been used before Conway for another class of links.  Arborescent links have also been studied by Montesinos \cite{Montesinos}, Hatcher and Thurston
\cite{HatcherThurston}, Oertel \cite{Oertel}, and many others.

 
Since arborescent tangles (resp. links) are built up from rational tangle components, we often want to decompose a tangle (link) into two arborescent tangle pieces.  This involves cutting along a decomposing disk (sphere) called a Conway disk (Conway sphere), which cuts the tangle or link into a set of rational tangles.  The length of an arborescent tangle or a large arborescent link is defined to be the minimum number of rational tangles among all such decompositions.

Wu classified all arborescent tangles without closed components whose exteriors are
hyperbolic in the sense that such a tangle admits a hyperbolic structure with totally geodesic boundary \cite{Wu}.  The main purpose of this paper is to study the same problem for the complement of arborescent tangles, allowing closed components.  Given an arborescent tangle $(B,T)$, define the tangle complement to be $X(T)=B-T$, and the tangle exterior $E(T)=B-\Int N(T)$.  Let $Q_m$ be the tangle with two vertical strings and $m$ horizontal circles, as shown in Figure \ref{fig: Q3 and annulus and torus}.  Given two tangles $T_1, T_2$, define $T_1 \ast T_2$ to be the tangle obtained by gluing $T_1$ on top of $T_2$.  See the paragraph before Definition  \ref{def: Q_n tangle, std torus, etc} for more details.  We can now state the main theorem from Section \ref{section: Hyperbolicity of complements}.

\bigskip

\noindent \textbf{Theorem \ref{thm: hyperbolic tangle complements}.} 
\textit{Suppose $T$ is an arborescent tangle.  Then $X(T)$ is non-hyperbolic if
and only if one of the following holds.}

\textit{(1) $T$ is a rational tangle.}

\textit{(2) $T = Q_m * T'$ for some $m\geq 1$.}

\textit{(3) $T$ contains $Q_n$ for some $n\geq 2$.}

\bigskip

A {\it standard annulus} in $Q_m$ is an annulus separating the circles from the two vertical arcs.  Similarly for {\it standard torus}.  See Definition \ref{def: Q_n tangle, std torus, etc} for more details.  The tangle complement $X(T)$ is non-hyperbolic if and only if it contains an essential surface $F$ which is a sphere, disk, annulus, or torus.  These can be determined explicitly as follows.

\bigskip

\noindent \textbf{Addendum \ref{addendum from YQW}.}  
\textit {Suppose $T$ is an arborescent tangle.}

\textit {(1) $X(T)$ contains no essential $S^2$.}

\textit {(2) $X(T)$ contains an essential disk $D$ if and only if $T$ is a rational tangle and $D$ is the disk separating the two strings of $T$.}

\textit {(3) $X(T)$ contains an essential annulus $A$ if and only if $T = Q_m * T'$ for some $m\geq 1$ and $A$ is a standard annulus in $Q_m$.}

\textit  {(4) $X(T)$ contains an essential torus $F$ if and only if $T$ contains a $Q_m$ for some $m\geq 2$ and $F$ is a standard torus in $Q_m$.}

\bigskip

Bonahon-Siebenmann classified all non-hyperbolic arborescent links in an unpublished manuscript \cite{BS}.  Oertel studied Montesinos links and found exactly which ones are hyperbolic.  See Theorem \ref{thm: Oertel's Montesinos hyperbolic links} for his statement.  We will use Theorem \ref{thm: hyperbolic tangle complements} to give a proof of the following theorem.  Together with Oertel's theorem, this gives a complete proof of Bonahon-Seibenmann's theorem for the classification of non-hyperbolic arborescent links.  

\bigskip

\noindent \textbf{Theorem \ref{thm: Bonahon-Seibenmann large arborescent links}.  {\rm (Bonahon-Seibenmann)}} 
\textit{ Suppose $L$ is a large arborescent link.  Then $L$ is non-hyperbolic if and only if it contains $Q_2$.}

\bigskip

An alternative proof of Bonahon-Seibenmann's theorem has been given by Futer and Gueritaud \cite{FuterGueritaud}, using a different method.

Gabai's definition for arborescent links uses tree diagrams (hence the use of the Latin word \emph{arbor}, meaning tree).   In this paper we define an arborescent link to be a Montesinos link or a link obtained by gluing two non-trivial arborescent tangles to each other.  See Definition \ref{defin: arborescent links in terms of tangles}.  The two definitions are equivalent for prime links, as shown in \cite{Reif}.  
We will also show that if $L$ is a large arborescent link then it is also prime.  See Theorem \ref{thm: large arborescent link is prime}.



\section{Definitions and Preliminaries}
\label{section: Definitions and Preliminaries}
 
Unless otherwise stated, in this paper surfaces are compact and orientable, and surfaces in 3-manifolds are properly embedded.  A surface $F$ in a 3-manifold $M$ is essential means it is incompressible, $\bdy$-incompressible, and not $\bdy$-parallel.  The manifold $M$ is $\bdy$-irreducible means $\bdy M$ is incompressible in $M$.  Given a set $X$ in a manifold $M,$ let $N(X)$ denote a regular neighborhood of $X$ in $M$.  We use $A \parallel B$ to denote that $A$ is parallel to $B$.   Other classical definitions can be found in Hempel \cite{Hempel}, Jaco \cite{Jaco}, or Hatcher's notes \cite{Hatcher2}.

A \emph{tangle} is a pair, $(B,T)$, where $B$ is a 3-ball and $T$ is a properly embedded 1-manifold.  In this paper we always assume that $T$ consists of 2 arcs and possibly some circles, so $T$ intersects $\bdy B$ in exactly 4 points.  A \emph{marked tangle} is a triple $(B, T, \Delta)$ where $(B,T)$ is a tangle and $\Delta$ is a disk on $\bdy B$ containing exactly two endpoints of $T$, called the {\it gluing disk}.  We use $T$ to describe a tangle when $B$ and $\Delta$ are not ambiguous.  

\begin{figure}[hbt]
\centering
\includegraphics[width=4in]{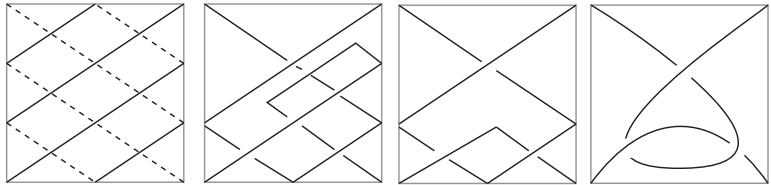}
\caption{The tangle $T[2/3],$ first inscribed on a pillowcase and then progressively simplified.}
\label{fig: T23}
\end{figure}

A \emph{rational tangle $T[p/q]$} is a tangle drawn by inscribing lines with slope $p/q$ on a ``pillowcase'' with four holes at the corners.   Figure \ref{fig: T23} gives an example of the simplification of the rational tangle $T[2/3]$ starting with the tangle drawn on a pillowcase.  The class of rational tangles includes the two trivial tangles, $T[0]$ and $T[\infty]$, as shown in Figure \ref{fig: trivial tangles}.  

\begin{figure}[hbtp]
\centering
\includegraphics[width=3in]{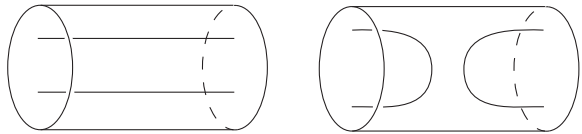}
\caption{The tangle on the left is T[0]; on the right is T[$\infty$].}
\label{fig: trivial tangles}
\end{figure}

Given a rational tangle $(B,T)$ in standard position (as drawn on the pillowcase $\bdy B)$, define a \emph{horizontal circle} as a simple closed curve on $\bdy B$ running horizontally and a \emph{vertical circle} as a simple closed curve on $\bdy B$ running vertically.  For example, the equator is a horizontal circle.

Given a tangle $(B, T)$, define the tangle complement to be
$X(T) = B - T$, and the tangle exterior to be $E(T) = B - \Int N(T)$.  While
they are homotopic, it is important to note that there are major
differences between a surface in $X(T)$ and a surface in $E(T)$.  For example,
the boundary of $X(T)$ is a 4-punctured sphere $\bdy B - T$, while the
boundary of $E(T)$ is a genus 2 surface.  Also, a surface properly
embedded in $X(T)$ may be $\bdy$-compressible in $E(T)$ but not in
$X(T)$.  

Given a string $t_i$ from a tangle $T$, the exterior of the string
$t_i$ is denoted $E(t_i)$, i.e. $E(t_i)= B-\Int N(t_i)$.  While $\bdy
M$ usually denotes the boundary of the 3-manifold $M$, it is
convenient to use the notation $\bdy N(T)$ to denote the
\emph{frontier} of the regular neighborhood $N(T)$ of $T$ instead of
the whole boundary of $N(T)$.  For example if $T$ is a pair of
arcs then $\bdy N(T)$ is a pair of annuli.  Similarly, we define $\bdy
N(t_i)$ to be the frontier of $N(t_i)$ when $t_i$ is a string of $T$.

Two marked tangles, $(B_1, T_1, \Delta_1)$ and $(B_2, T_2, \Delta_2)$,
are \emph{equivalent} means there is an orientation preserving homeomorphism of triples from
$(B_1, T_1, \Delta_1)$ to $(B_2, T_2, \Delta_2)$.  For example we can
see that two rational tangles $T[p_1/q_1]$ and $T[p_2/q_2]$ are
equivalent if and only if $p_1/q_1 \equiv p_2/q_2$ mod $\mathbb{Z}$.
Given two tangles, $(B_1, T_1, \Delta_1)$ and $(B_2, T_2, \Delta_2)$,
the sum $(B_1, T_1, \Delta_1) + (B_2, T_2, \Delta_2)$ is defined by
choosing a gluing map $\phi: \Delta_1 \rightarrow \Delta_2$ with
$\phi(\Delta_1 \cap T_1)=\Delta_2 \cap T_2;$ write $(B,T)=(B_1, T_1,
\Delta_1) + (B_2, T_2, \Delta_2)$ and say that $(B,T)$ is the sum of
the two tangles.  More simply, we say $T=T_1 + T_2$.  Note that this
sum depends on the gluing map $\phi$, but in most cases the property
of $T$ in which we are interested is not affected by the choice of
$\phi$.  In the case that we want to be specific about the gluing
disk, $\Delta \approx \Delta_1 \approx \Delta_2$, we denote the sum as
$T= T_1 +_{\Delta} T_2$.  Furthermore, we define the twice-punctured
disk $P(\Delta)=\Delta \cap X(T_1)=\Delta \cap X(T_2)$.  A sum of two
marked tangles, \tsum, is called nontrivial exactly when neither $(B_1, T_1,
\Delta _1)$ nor $(B_2, T_2, \Delta _2)$ is $T[0]$ or $T[\infty]$.

An arborescent tangle $T$ can be defined in terms of rational tangles as follows.  Rational tangles are arborescent tangles, and any nontrivial sum of arborescent tangles is an arborescent tangle.  Arborescent links are built from these arborescent tangles and will be explained more below.

Montesinos tangles are a smaller class within the class of arborescent tangles. They are characterized by the fact that their gluing disks are mutually disjoint.  Tangles written in the form $T(r_1, r_2, ..., r_n)$ with $r_i$ a rational number for $i=1, .., n$ are Montesinos tangles drawn by connecting each tangle $T[r_i]$ in order from left to right and connecting the top strings and bottom strings.  See Figure \ref{fig: Montesinos Tangle} for a diagram.  

\begin{figure}[hbt]
\centering
\includegraphics[width=3in]{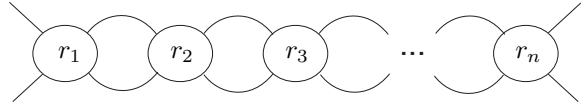}
\put(-198,17){$r_1$}
\put(-155.5,17){$r_2$}
\put(-112,17){$r_3$}
\put(-25,17){$r_n$}
\caption{A Montesinos Tangle}
\label{fig: Montesinos Tangle}
\end{figure}

The \emph{length of an arborescent tangle $T$}, given by $\ell (T)$, is the minimum number of (non-trivial) rational tangles from which $T$ can be written as a sum.  An example is shown in Figure \ref{fig: length}.  

\begin{figure}[hbtp]
\centering
\includegraphics[width=1.5in]{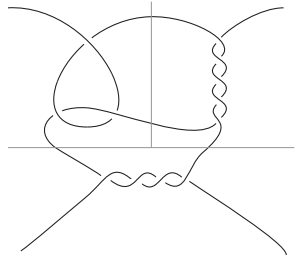}
\caption{An arborescent tangle with length 3.}
\label{fig: length}
\end{figure}

As defined by Gabai \cite{Gabai}, an arborescent link is the boundary of a surface constructed by plumbing (Murasugi sum along a 4-gon) as specified by a tree.  The reader is referred to Gabai's paper \cite{Gabai} for details on how the trees relate to the links.  An
example of such a tree and associated link is shown in Figure \ref{fig: arborescent tree diagram}.

\begin{figure}[hbtp]
\centering
\includegraphics[width=4in]{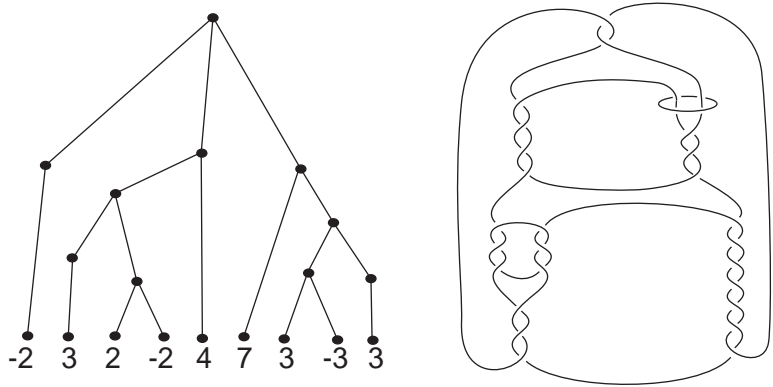}
\caption{An example of a tree diagram (left) and its associated arborescent link (right).}
\label{fig: arborescent tree diagram}
\end{figure}

A \emph{Conway sphere} for a link $L$ in $S^3$ is a sphere $S$ intersecting $L$ at 4 points, such that $S - L$ is incompressible in $S^3 - L$.  Similarly, a \emph{Conway disk} for a tangle $(B, T)$ is a disk $D$ in $B$ intersecting $T$ at two points, such that $D - T$ is
incompressible in $B-T$, and there is no disk $E$ in $B-T$ with $\bdy E$ a union of two arcs $\alpha \cup \beta$, where $\alpha \subset \bdy B$, and $\beta = E \cap D$ is an essential arc on $D - T$.  A Conway disk will also be called a \emph{decomposing disk.}

An arborescent link can also be defined in terms of arborescent tangles. This is more convenient for our purposes.  If $T = T(p_1/q_1, ..., p_n/q_n)$ is a Montesinos tangle and $q_i > 1$ for all $i$, then the numerator closure of $T$ is called a Montesinos link of length $n$. (See Figure \ref{fig: Montesinos Link}.)  Note that a Montesinos link
of length 1 or 2 is a 2-bridge link.  Montesinos links have been studied in detail by Oertel \cite{Oertel}, who called them star links since the tree diagrams (as in Gabai \cite{Gabai}) are star-shaped.  To denote a Montesinos link as in Figure \ref{fig: Montesinos Link}, we write $L(r_1, r_2,...,r_n)$, where $r_i=p_i/q_i$.  Burde and Zieschang's book \cite{BurdeZieschang} gives more detail on Montesinos Links.

\begin{defin} {\rm (Wu \cite{Wu2})}
\label{defin: arborescent links in terms of tangles}
A link is a \emph{small arborescent link} if it is a Montesinos link
of length at most $3$.  A link is a \emph{large arborescent link} if
it has a Conway sphere cutting it into two non-rational arborescent
tangles.  A link is an \emph{arborescent link} if it is either a small
arborescent link or a large arborescent link.
\end{defin}

There is a slight difference between the definition for arborescent links given above and Gabai's tree definition.  For example, the diagram shown in Figure \ref{fig: NOT arborescent link} is not arborescent by the above definition although it can be obtained from a
tree diagram as defined by Gabai if an end-vertex with zero weight is allowed.  However, notice that this link is a composite link.  
In this paper, we use the tangle-definition of arborescent link.  This will not cause loss of generality when studying the hyperbolicity of arborescent links because we already know that composite links are non-hyperbolic.

\begin{figure}[hbtp]
\centering
\includegraphics[width=4in]{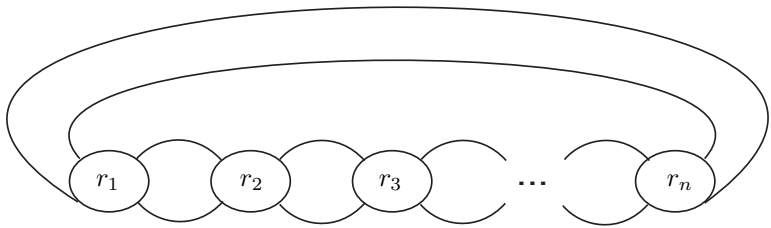}
\put(-255,15){$r_1$}
\put(-200.5,15){$r_2$}
\put(-148,15){$r_3$}
\put(-39,15){$r_n$}
\caption{A Montesinos link $L(r_1, r_2, r_3, ..., r_n)$.}
\label{fig: Montesinos Link}
\end{figure} 

\begin{figure}[hbtp]
\centering
\includegraphics[width=2.5in]{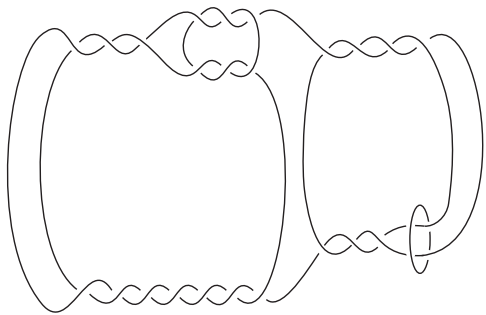}
\caption{This is an example of a link which is NOT an arborescent link
  by the tangle definition.  However, if end-vertices with weight zero
  are allowed in Gabai's definition, then this link is arborescent by
  that definition.}
\label{fig: NOT arborescent link}
\end{figure}

\section{Hyperbolicity of Tangle Complements}
\label{section: Hyperbolicity of complements}

Recall that the complement of a tangle $T = (B,T)$ is the non-compact
manifold $X(T) = B - T$.  

\begin{defin} The complement $X(T)$ of a tangle $T$ is \emph{hyperbolic}
if it is irreducible, $\bdy$-irreducible, atoroidal, and anannular.  
\end{defin}

If $X(T)$ is hyperbolic by the above definition then the double of
$X(T)$ along $\bdy X(T)$ with toroidal cusps removed is a compact
manifold with toroidal boundary, which is irreducible, atoroidal, and cannot be Seifert fibered because $\bdy X(T)$ is a separating incompressible surface with negative Euler characteristic. (See the proof of Lemma \ref{lem: Non Seifert fibered}.)
Therefore the double of $X(T)$ is hyperbolic, and hence $X(T)$ admits
a complete hyperbolic structure with totally geodesic boundary (see Thurston \cite{Thurston}).  The main theorem of this paper is Theorem \ref{thm: hyperbolic tangle
  complements}, which determines all non-hyperbolic arborescent tangle
complements.

Proposition \ref{prop: P essential and bdy sphere incompressible}
shows that if $X(T)$ is $\bdy$-reducible then $T$ is a rational
tangle.  Proposition \ref{prop: arb tangle complements irred} shows that
$X(T)$ is always irreducible.  Proposition \ref {prop: essential
  annulus is standard} determines all $X(T)$ which contain essential
annuli, and Proposition \ref {prop: toroidal tangle contains Q2 with
  torus standard} determines those containing essential tori.  
Theorem \ref{thm: hyperbolic tangle complements} follows from these
propositions.  

\subsection{Essential disks and essential spheres in $X(T)$}


Given a rational tangle $(B,T)=T[p/q]=(t_1 \cup t_2)$, a compressing
disk for $\bdy B -T$ separates the strings $t_1$ and $t_2$. If $T$ is
the trivial tangle $T[0]$, one can see that the horizontal disk with a
horizontal circle as its boundary is the only compressing disk for
$\bdy B-T$ up to isotopy.  Similarly, up to isotopy the only
compressing disk for the trivial tangle $T[\infty]$ is the compressing
disk with a vertical circle as its boundary.  Since any rational
tangle $T=T[p/q]$ is homeomorphic to a trivial tangle, we can see that
up to isotopy there is only one compressing disk for $\bdy X(T)$.
This fact will be used in the proof of Lemma \ref{lem: pq tangle}.

\begin{defin}
  Let $\alpha_1,$ $\alpha_2$ be simple closed curves on a surface $F$
  such that $\alpha_1 \cap \alpha_2 \neq \emptyset$.  A \emph{bigon
    between $\alpha_1,$ $\alpha_2$} is a disk $D \cont F$ such that
  $\bdy D=\alpha_1' \cup \alpha_2'$, where $\alpha_i'$ is an arc on
  $\alpha_i$.
\end{defin}

The following Lemma is from Casson and Bleiler \cite[pp. 26-30]{CassonBleiler}.

\begin{lem}
Suppose $\alpha,$ $\beta$ are simple closed curves on a compact hyperbolic surface $F$ such that there is no bigon between $\alpha$ and $\beta$.  Let $\beta'$ be a simple closed curve on $F$ which is isotopic to $\beta$.  Then $|\alpha \cap \beta| \leq |\alpha \cap \beta'|$, and equality holds iff there is no bigon between $\alpha$ and $\beta'$.  
\label{lem: casson and bleiler}
\end{lem}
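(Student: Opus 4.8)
The plan is to derive the statement from the \emph{bigon criterion}: a simple closed curve transverse to $\alpha$ realizes the geometric intersection number
$i(\alpha,\beta) := \min\{\, |\alpha \cap \gamma| : \gamma \text{ is isotopic to } \beta \text{ and transverse to } \alpha \,\}$
if and only if it forms no bigon with $\alpha$. Granting this, the lemma is a formal consequence. Since $\beta$ has no bigon with $\alpha$, we get $|\alpha \cap \beta| = i(\alpha,\beta)$, and because $i(\alpha,\beta)$ is by definition the minimum over the isotopy class, any $\beta'$ isotopic to $\beta$ satisfies $|\alpha \cap \beta'| \geq i(\alpha,\beta) = |\alpha \cap \beta|$, which is the asserted inequality. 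Moreover, equality holds exactly when $|\alpha \cap \beta'| = i(\alpha,\beta)$, i.e.\ (by the criterion, applied in both directions) exactly when $\beta'$ forms no bigon with $\alpha$. The easy half of the criterion is the reduction move: if $\beta'$ does form a bigon with $\alpha$, then an isotopy of $\beta'$ supported in a neighborhood of that bigon removes two intersection points, so $|\alpha \cap \beta'|$ is not minimal; iterating, every representative reduces to a bigon-free one.

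The substance is the other half (no bigon implies minimal position), and here I would use the hyperbolic structure. Pass to the universal cover $p : \mathbb{H}^2 \to F$ (working in the interior of $F$, or in its double, so that the cover is $\mathbb{H}^2$ with ideal boundary $S^1_\infty$), and assume $\alpha,\beta$ are essential; if one bounds a disk an innermost-disk argument shows a bigon is forced unless the curves are disjoint, and the claim is direct. Each lift of an essential simple closed curve is a properly embedded line with two distinct ideal endpoints, and distinct lifts are disjoint because the curve is embedded. A free homotopy (in particular an isotopy) moves lifts a bounded distance, so lifts of $\beta'$ have the same ideal endpoint pairs as lifts of $\beta$, and similarly for $\alpha$. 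The \emph{key claim} is: if $\gamma$ (taken to be $\beta$, or a bigon-free $\beta'$) has no bigon with $\alpha$, then any lift of $\alpha$ meets any lift of $\gamma$ in at most one point. Granting the key claim, a pair of lifts crosses if and only if their ideal endpoint pairs link on $S^1_\infty$, a condition depending only on the free homotopy classes; a standard count of $\pi_1$-orbits of crossing lift-pairs then identifies $|\alpha \cap \gamma|$ with the number of linked pairs. This number is therefore the same for every bigon-free representative, and equals $|\alpha_g \cap \beta_g|$ for the geodesic representatives. Since two distinct geodesics form no bigon (a bigon would lift to two geodesic arcs in $\mathbb{H}^2$ sharing both endpoints), the geodesics realize the minimum, so this common value is $i(\alpha,\beta)$, completing the criterion.

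The step I expect to be the main obstacle is the key claim, which I would prove by an innermost-bigon argument; the delicate point is producing an \emph{embedded} bigon downstairs from a pair of lifts crossing twice. If a lift $\tilde\alpha$ meets a lift $\tilde\gamma$ at two points, the two subarcs between consecutive intersection points bound a disk $\tilde\Delta \subset \mathbb{H}^2$. Because $\alpha$ and $\gamma$ are each simple, any other lift of $\alpha$ entering $\tilde\Delta$ must enter and leave across the $\tilde\gamma$-side (it cannot cross $\tilde\alpha$), and dually for lifts of $\gamma$; in either case it cuts off a strictly smaller bigon. As $p^{-1}(\alpha \cup \gamma)$ meets the compact disk $\tilde\Delta$ in only finitely many arcs, I may choose $\tilde\Delta$ \emph{innermost}, with interior disjoint from all lifts. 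A deck-transformation argument (a nontrivial deck transformation cannot carry the compact disk $\tilde\Delta$ to itself, and cannot overlap it without some lift entering its interior) then shows $p|_{\tilde\Delta}$ is injective, so $\tilde\Delta$ projects to an embedded bigon between $\alpha$ and $\gamma$ in $F$, contradicting the no-bigon hypothesis. This establishes the key claim, and with it the lemma. The remaining bookkeeping — the $\pi_1$-orbit count identifying $|\alpha \cap \gamma|$ with the number of linked lift-pairs, and the disposal of inessential curves — is routine.
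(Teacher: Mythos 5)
Your proposal is correct, and it is essentially the argument of Casson and Bleiler \cite[pp.~26--30]{CassonBleiler}, which the paper cites in lieu of giving its own proof: geodesic representatives, lifts to $\mathbb{H}^2$, linking of ideal endpoint pairs on the circle at infinity, and an innermost-disk/deck-transformation argument to produce an embedded bigon downstairs. The one point you gloss --- that the bigon-removal isotopy should be performed on an \emph{innermost} bigon, whose interior is disjoint from $\alpha \cup \beta'$, so that no new intersections are created --- is routine and does not affect correctness.
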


\begin{rem}
Lemma \ref{lem: casson and bleiler} also holds for non-hyperbolic surfaces.
\end{rem}

\begin{lem}
Suppose $T$ is a $p/q$ rational tangle, $(p,q)=1,$ $q \geq 1,$ and let $S=\bdy B - T$.  Let $P,Q \cont \bdy B$ be twice punctured disks such that $\alpha =\bdy P=\bdy Q=P \cap Q$ is a vertical circle and $P$ is the left disk.  If $D$ is a compressing disk for $S$ and neither $D \cap P$ nor $D \cap Q$ contains an arc which is inessential on $P$ or $Q,$ respectively, then $|\alpha \cap \bdy D|=2q$.  In particular, $P$ is incompressible. 
\label{lem: pq tangle}
\end{lem}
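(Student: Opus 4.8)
The plan is to recognize $S=\partial B - T$ as the four-punctured sphere, with $\alpha$ an essential (vertical, i.e.\ ``slope $\infty$'') simple closed curve and $\partial D$ the boundary of the compressing disk for $S$, which is unique up to isotopy by the discussion preceding the lemma. From the pillowcase description of $T[p/q]$ the two strings are parallel arcs of slope $p/q$, and the disk separating them has boundary isotopic to the slope-$p/q$ curve on $S$; thus $\partial D$ has slope $p/q$, a curve separating the four punctures $2{+}2$. Granting this, the content of the lemma is a computation of the geometric intersection number of the slope-$\infty$ curve $\alpha$ with the slope-$p/q$ curve $\partial D$, together with a check that the hypothesis on $D$ realizes this minimum. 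Accordingly I would split the argument into three parts: (i) the no-inessential-arc hypothesis forbids bigons between $\alpha$ and $\partial D$; (ii) hence $|\alpha\cap\partial D|$ is already the minimal geometric intersection number in the two isotopy classes; and (iii) that minimum equals $2q$.

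For (i), suppose there were a bigon between $\alpha$ and $\partial D$ in $S$. Choosing an innermost one, its interior meets neither $\alpha$ nor $\partial D$, so it lies entirely on one side of $\alpha$, say in $P$, and contains no puncture since it is an honest disk in $S$. Its $\partial D$-side is then a single arc of $D\cap P$ cutting off an unpunctured disk in $P$, that is, an arc inessential on $P$, contradicting the hypothesis; the same applies with $Q$. Hence there is no bigon between $\alpha$ and $\partial D$, and by Lemma~\ref{lem: casson and bleiler} (together with the following Remark) $|\alpha\cap\partial D|$ equals the minimal geometric intersection number of the isotopy classes of $\alpha$ and $\partial D$.

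For (iii), I would pass to the double cover of the pillowcase $\partial B$ by the torus $\mathbb{T}^2$ branched over the four punctures. Away from the branch points this is an honest double cover, and since both $\alpha$ and $\partial D$ separate the punctures $2{+}2$, each lifts to a \emph{pair} of disjoint parallel straight curves of the corresponding slope. Two straight curves of slopes $1/0$ and $p/q$ on $\mathbb{T}^2$ meet minimally in $q$ points, so the four lifted pairs contribute $4q$ intersection points upstairs; as the deck involution acts freely on these points and the cover has degree two, they descend to $2q$ points on $S$. Combined with (ii), this yields $|\alpha\cap\partial D| = 2q$.

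Finally, for the ``in particular'' statement: up to isotopy the essential simple closed curves in the twice-punctured disk $P$ are $\alpha$ itself and the two meridians encircling a single puncture. No meridian bounds a disk in $X(T)$ because the corresponding string has been removed, and $\alpha$ bounds no disk since the unique compressing disk has slope $p/q\neq\infty$ (here $q\geq 1$); hence $P$ is incompressible. The step I expect to be the main obstacle is (iii), specifically pinning down rigorously that $\partial D$ is the slope-$p/q$ curve rather than merely \emph{some} separating slope, and then applying the branched-cover intersection count with the correct bookkeeping for the factor of two; once the reduced-position hypothesis is translated into the absence of bigons, parts (i)–(ii) are routine.
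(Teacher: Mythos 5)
Your proposal is correct and follows essentially the same route as the paper: both identify $\partial D$, via the uniqueness of the compressing disk discussed before the lemma, with the boundary of a neighborhood of a slope-$p/q$ string, translate the no-inessential-arc hypothesis into the absence of bigons between $\alpha$ and $\partial D$, and conclude with the Casson--Bleiler criterion (Lemma \ref{lem: casson and bleiler}). The only difference is in the bookkeeping for the number $2q$ --- the paper counts directly on the pillowcase ($|c_1 \cap \alpha| = q$, hence $|\partial N(c_1) \cap \alpha| = 2q$) where you lift to the torus double branched cover --- and your explicit argument for the incompressibility of $P$ spells out a step the paper leaves implicit.
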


\begin{proof}
Let $\alpha$ be a vertical circle on $\bdy B$.  By definition, $T$ is isotopic rel $\bdy T$ to a pair of arcs $c_1 \cup c_2$ of slope $p/q$ on the pillowcase $\bdy B$.  Note that $|c_i \cap \alpha|=q$.  Let $\beta$ be the boundary of a regular neighborhood of $c_1$ on $\bdy B$.  Then $\beta$ bounds a compressing disk of $\bdy B -T$ in $B-T$.  Figure \ref{fig: pq disk} demonstrates an example of such a compressing disk.

Since $|c_1 \cap \alpha|=q,$ we have $|\beta \cap \alpha| = 2q$.  Note that $\beta$ intersects $P$ and $Q$ in essential arcs, hence there is no bigon between $\alpha$ and $\beta$.  By the discussion above, $\bdy D$ is isotopic to $\beta,$ so by Lemma \ref{lem: casson and bleiler}, $|\bdy D \cap \alpha|=|\beta \cap \alpha|=2q$ if and only if there are no bigons between $\bdy D$ and $\alpha,$ i.e., each component of $\bdy D \cap P$ and $\bdy D \cap Q$ is essential. 
\end{proof}

\begin{figure}[hbtp]
\centering
\includegraphics[width=1.5in]{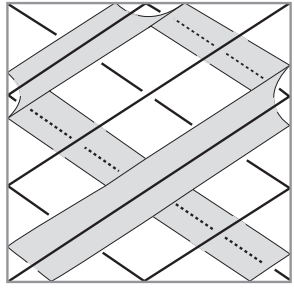}
\caption{The shaded region represents the compressing disk (up to isotopy) for the rational tangle $T(2/3)$.}
\label{fig: pq disk}
\end{figure}

\begin{lem}
Suppose $F$ is an essential surface in a $3$-manifold $M$.  If $M$ is $\bdy$-reducible then there exists a $\bdy$-reducing disk $D$ such that $D \cap F=\emptyset$.  If $M$ is reducible then there is a reducing sphere $S$ such that $S \cap F=\emptyset$.
\label{lem: boundary reducing disk with empty intersection AND sphere}
\end{lem}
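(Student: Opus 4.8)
The plan is to put the $\bdy$-reducing disk (resp.\ reducing sphere) in transverse position with respect to $F$ and then eliminate the curves of intersection one at a time, using incompressibility of $F$ to kill circles and $\bdy$-incompressibility of $F$ to kill arcs. To organize this I would fix, among all $\bdy$-reducing disks (resp.\ all reducing spheres) transverse to $F$, one for which the number of components of the intersection with $F$ is minimal, call it $D$ (resp.\ $S$), and then show that this minimum must be zero.

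\textbf{The sphere case.} Since $S$ and $F$ are closed, $S \cap F$ consists only of simple closed curves. If it is nonempty, let $c$ be a curve innermost on $S$, bounding a disk $D_S \subset S$ with $\Int D_S \cap F = \emptyset$. Incompressibility of $F$ gives a disk $D_F \subset F$ with $\bdy D_F = c$. Surgering $S$ along $D_F$ (capping the two sides of $c$ with parallel push-offs of $D_F$) yields two spheres: one, built from $D_S$ and a push-off of $D_F$, is disjoint from $F$, and the other meets $F$ in exactly $|S \cap F|-1$ curves. The point I would use here is that surgering an essential sphere never produces two inessential spheres: if both pieces bounded balls then $S$ itself would bound a ball. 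Hence one of the two spheres is an essential reducing sphere meeting $F$ in fewer curves, or disjoint from $F$ outright, contradicting minimality. Therefore $S \cap F = \emptyset$.

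\textbf{The disk case.} Now $D \cap F$ may contain both circles and arcs. Circles are removed exactly as above: an innermost circle on $D$ bounds a disk which, by incompressibility of $F$, is capped off inside $F$ and pushed across; this surgery leaves $\bdy D$ unchanged, so $D$ stays $\bdy$-reducing while $|D \cap F|$ drops, contradicting minimality. Thus $D \cap F$ consists of arcs only. If an arc is present, take one outermost on $D$, cutting off a subdisk $D' \subset D$ with $\Int D' \cap F = \emptyset$ and $\bdy D' = a \cup b$, where $a = D' \cap F$ and $b \subset \bdy M$. Then $D'$ is a $\bdy$-compressing disk for $F$ along $a$, so $\bdy$-incompressibility of $F$ forces $a$ to be inessential in $F$. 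Consequently $F$ contains inessential arcs of $D \cap F$; choosing one whose cut-off disk $F' \subset F$ is innermost (minimal by inclusion), I get $\Int F' \cap D = \emptyset$. Pushing $D$ across $F'$ is then a genuine ambient isotopy, supported in a product neighborhood of $F'$ whose interior is disjoint from $D$; it removes the arc of $D \cap F$ lying in $\bdy F'$, lowers $|D \cap F|$, and carries $D$ to an isotopic (hence still $\bdy$-reducing) disk, contradicting minimality. Hence $D \cap F = \emptyset$.

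\textbf{Main obstacle.} The two places needing care are precisely where the reduction must preserve a property, not merely lower the count. In the sphere case this is the fact that surgering an essential sphere yields at least one essential sphere. In the disk case it is keeping $\bdy D$ essential on $\bdy M$: here $\bdy$-incompressibility is exactly what guarantees the offending arc is $\bdy$-parallel in $F$, and passing to an innermost cut-off disk $F'$ on $F$ is what makes the removal an honest isotopy of $D$ rather than a boundary-changing surgery, so that $D$ remains a $\bdy$-reducing disk. Everything else is the routine innermost-disk/outermost-arc bookkeeping.
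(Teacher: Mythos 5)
Your strategy is the same as the paper's (minimize $|D\cap F|$, then kill circles by incompressibility and arcs by $\bdy$-incompressibility), but both of your circle-removal steps have a genuine flaw. When you take a circle $c$ of $D\cap F$ (or of $S\cap F$) innermost on $D$ (on $S$) and cap it with the disk $D_F\cont F$ supplied by incompressibility, nothing guarantees that the interior of $D_F$ is disjoint from $D$ (from $S$): even though $c$ is innermost on $D$, other curves of the intersection may lie inside $D_F$ on $F$. In that configuration every push-off of $D_F$ meets the rest of $D$ (of $S$) near each such curve, so the swapped disk is not embedded, and in the sphere case your two surgered spheres are not both embedded; the count $|S\cap F|-1$ and, worse, the dichotomy ``one of the two spheres is essential,'' which presupposes two embedded spheres that could bound balls, both break down. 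This is precisely the pitfall you correctly guarded against in the arc case, where you insisted on an innermost cut-off disk $F'$ so that the move is an honest isotopy, but you overlooked it for circles.

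The repair is short and is exactly what the paper does. Any curve of the intersection lying inside $D_F$ is automatically inessential on $F$ (it lies in a disk), so the set of inessential-on-$F$ circles is nonempty; choose one that is innermost on $F$, whose capping disk then has interior disjoint from $D$ (from $S$), making the swap or surgery embedded and reducing the count. Your innermost-on-$D$ observation is still needed, but only for the complementary case in which every circle of $D\cap F$ is essential on $F$: there the innermost subdisk of $D$ is a compressing disk for $F$, contradicting incompressibility. With this two-case treatment of circles inserted, the rest of your argument goes through; indeed your arc step (isotoping $D$ across an innermost cut-off disk, after noting that once circles are gone no other component of $D\cap F$ can enter $F'$) is correct and slightly cleaner than the paper's cut-and-paste, which must additionally argue that at least one of the two swapped disks is still a $\bdy$-reducing disk.
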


\begin{proof}
This is a standard innermost circle/outermost arc argument.  Choose a $\bdy$-reducing disk $D$ so that the number of components $|D \cap F|$ is minimal.  (The proof for the reducing sphere is similar.)  If $D \cap F$ has inessential circle components on $F,$ let $D'$ be a disk on $F$ bounded by an innermost such component and let $D_1$ be the disk on $D$ bounded by $\bdy D'$.  Then $D_1'=(D-D_1) \cup D'$ can be perturbed so that $|D_1' \cap F| < |D \cap F|,$ contradicting the fact that $|D \cap F|$ is minimal.  If $D \cap F$ has some circle components and if they are all essential on $F,$ then a disk on $D$ bounded by an innermost circle component of $D \cap F$ would be a compressing disk of $F,$ contradicting the incompressibility of $F$.  Therefore $D \cap F$ has no circle components.  Similarly if $D \cap F$ has a trivial arc on $F$ then an outermost such arc $\alpha$ on $F$ would cut off a disk $D'$ on $F$ and $\alpha$ splits $D$ into $D_1$ and $D_2;$ at least one of the $D_i'=D_i \cup D'$ is then a $\bdy$-reducing disk of $M,$ which can be isotoped to reduce $|D \cap F|$.  If $D \cap F$ consists of essential arcs on $F$ then a disk on $D$ cut off by an outermost component of $D \cap F$ on $D$ would be a boundary compressing disk of $F,$ contradicting the $\bdy$-incompressiblity of $F$.  
\end{proof}

\begin{prop}
Let $T = T_1 +_{\Delta} T_2$ be a non-trivial sum of arborescent tangles.  Then $P(\Delta)$ is essential in $X(T)$ and $\bdy X(T)$ is incompressible in $X(T)=B-T$.  
\label{prop: P essential and bdy sphere incompressible}
\end{prop}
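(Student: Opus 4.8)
The plan is to run the whole argument by induction on the length $\ell(T)$, reducing the essentiality of the twice-punctured disk $P(\Delta)$ in $X(T)$ to its essentiality in each summand complement and then gluing, and afterwards deducing the incompressibility of $\bdy X(T)$ from the essentiality of $P(\Delta)$. The inductive statement is exactly ``for every non-trivial marked arborescent tangle $R$, the disk $P(\Delta_R)$ is essential in $X(R)$,'' whose base case is a non-trivial \emph{rational} summand, where Lemma \ref{lem: pq tangle} applies. For incompressibility I note that, since $\Delta$ meets $T$ in two points, $P(\Delta)=\Delta-T$ is a twice-punctured disk, and any essential simple closed curve on it encircles exactly one puncture, hence is a meridian of a string of $T$. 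Such a curve is non-trivial in $H_1(X(T))$ and bounds no disk in $X(T)$, so $P(\Delta)$ is incompressible; the identical argument works inside $X(T_1)$ and $X(T_2)$.

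For the $\bdy$-incompressibility I would use that an essential arc on a twice-punctured disk must separate its two punctures. Given a $\bdy$-compressing disk $E$ for $P(\Delta)$, a standard innermost-circle/outermost-arc argument (as in the proof of Lemma \ref{lem: boundary reducing disk with empty intersection AND sphere}, removing the intersections of $E$ with the incompressible $P(\Delta)$) lets me assume $E$ lies on one side, say in the closure of the $X(T_1)$ side, meeting $P(\Delta)$ only in the essential arc $a=\bdy E\cap P(\Delta)$. Then $E$ is a $\bdy$-compression of $P(\Delta)$ inside $X(T_1)$, contradicting the inductive hypothesis. Since $P(\Delta_1)$ is identified with $P(\Delta)$ and $\ell(T_1)<\ell(T)$, the hypothesis applies; the summand $T_2$ is symmetric. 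Finally, if $P(\Delta)$ were $\bdy$-parallel in $X(T)$ it would cut off a product region $P(\Delta)\times I$ on one side, forcing $T_1$ or $T_2$ to be $T[0]$ or $T[\infty]$, against non-triviality. For the base case $R=T[p/q]$ with $q\geq 1$, Lemma \ref{lem: pq tangle} gives incompressibility, and the count $|\alpha\cap\bdy D|=2q$ shows that any $\bdy$-compression or $\bdy$-parallelism would yield a compressing disk for $\bdy B-R$ meeting the vertical circle $\alpha$ fewer than $2q$ times (equivalently with an inessential arc on $P$ or $Q$), which is impossible unless $R$ is $T[0]$ or $T[\infty]$.

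Once $P(\Delta)$ is known to be essential, I would prove the incompressibility of $\bdy X(T)=\bdy B-T$ as follows. Suppose it is compressible; by Lemma \ref{lem: boundary reducing disk with empty intersection AND sphere} there is a compressing disk $D$ with $D\cap P(\Delta)=\emptyset$, so $D$ lies on one side, say in $X(T_1)$, and $\bdy D$ is a curve on the $4$-punctured sphere $\bdy X(T_1)$ that separates the four punctures into two pairs and is disjoint from $\bdy\Delta$. The only such curve up to isotopy is parallel to $\bdy\Delta$, so $\bdy D$ encircles exactly the two punctures of $\bdy X(T_1)$ lying off $\Delta$. Since $D$ misses $T$, the two strings of $T_1$ through those punctures cannot cross $D$ and so must close up on the $D$-side; combined with the essentiality of $P(\Delta)$ this forces $T_1$ to be trivial, a contradiction.

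The hard part will be the last step together with the $\bdy$-incompressibility reduction. Keeping careful track of how $\bdy D$, and the arc $a=\bdy E\cap P(\Delta)$, meet $\bdy\Delta$, and showing that the only surviving compression is the $\bdy\Delta$-parallel one which can occur only for a trivial summand, is precisely where the non-triviality hypothesis enters and where the case bookkeeping is heaviest; the incompressibility of $P(\Delta)$ and the base-case count are comparatively routine.
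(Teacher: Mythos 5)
Your proposal follows the paper's general strategy (induction on $\ell(T)$, with Lemma \ref{lem: pq tangle} as the rational base case and Lemma \ref{lem: boundary reducing disk with empty intersection AND sphere} to push disks off $P(\Delta)$), but two of its key steps have genuine gaps. First, the incompressibility argument for $P(\Delta)$ is wrong as stated: it is not true that every essential simple closed curve on a twice-punctured disk encircles exactly one puncture. There is also the curve parallel to $\bdy \Delta$ enclosing \emph{both} punctures, and this is exactly the dangerous one. It is homologous to $\pm\mu_1\pm\mu_2$ and can be null-homologous (e.g.\ when the two punctures are joined by a single strand on one side), so your $H_1$/meridian argument says nothing about it; indeed, when a summand is $T[\infty]$ this curve really does bound a compressing disk, which is precisely where the non-triviality hypothesis must enter. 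The paper dispatches this case by noting that any compressing disk for $P(\Delta)$ is a compressing disk for the $4$-punctured sphere $\bdy B_j - T_j$, and then either invoking the inductively known incompressibility of $\bdy X(T_j)$ when $\ell(T_j)>1$, or Lemma \ref{lem: pq tangle} when $T_j$ is rational: $\bdy D \cap \bdy\Delta = \emptyset$ forces $2q = |\bdy\Delta \cap \bdy D| = 0$, so $T_j = T[\infty]$.

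Second, your $\bdy$-incompressibility reduction is mis-wired. After pushing $E$ to one side, ``a $\bdy$-compression of $P(\Delta)$ inside $X(T_1)$'' does not typecheck, since inside $X(T_1)$ the surface $P(\Delta)$ is a subsurface of $\bdy X(T_1)$, not properly embedded; and ``$P(\Delta_1)$ is identified with $P(\Delta)$'' is false --- $\Delta_1$ is an interior decomposing disk of $T_1$, whereas $\Delta$ lies on $\bdy B_1$. What $E$ actually becomes is a compressing disk for the $4$-punctured sphere $\bdy X(T_1)$ meeting $\bdy\Delta$ in exactly two points, and ruling it out requires the incompressibility of $\bdy X(T_1)$ --- the \emph{second} clause of the Proposition, which your inductive statement (``$P(\Delta_R)$ is essential'') deliberately omits, so your induction does not close. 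The same omission undermines your final step: ``the two strings must close up on the $D$-side \dots forces $T_1$ to be trivial'' is assertion, not proof; what does the work is the inductive boundary-incompressibility when $\ell(T_1)\geq 2$, together with the count of Lemma \ref{lem: pq tangle} in the rational case ($|\bdy D \cap \bdy\Delta| = 0 \Rightarrow q=0$, i.e.\ $T[\infty]$; and for the $\bdy$-compression, $|\bdy E \cap \bdy\Delta| = 2 \Rightarrow q=1$, an integral tangle, making the marked sum trivial). The repair is exactly the paper's device: run the minimal-counterexample/induction with \emph{both} clauses --- essentiality of $P(\Delta)$ and incompressibility of $\bdy X(T)$ --- carried simultaneously.
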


\begin{proof}
Let $T$ be a minimal counterexample in the sense that $T$ is an arborescent tangle such that $\ell(T)=n$ and there does not exist an arborescent tangle $T'$ such that $\ell(T') < n$ and $T'$ is a counterexample.  Then \tsum\ is a nontrivial sum and $n \geq 2$.  Let $P=P(\Delta)$.  We need to prove that $P$ is incompressible and $\bdy$-compressible, and $\bdy X(T)$ is incompressible.

Suppose $P$ is compressible.  Let $D$ be a compressing disk for $P,$ so $D \cont X(T_j)$ for $j=1$ or $2$.  If $\ell(T_j)=1$ then by Lemma \ref{lem: pq tangle}, $T_j=T[\infty]$.  This contradicts the fact that \tsum\ is a non-trivial sum.  If $\ell(T_j) >1,$ then $D$ is a compressing disk for $\bdy B-T_j$.  Since $\ell(T_j) < n,$ this contradicts the fact that $T$ is a minimal counterexample.  Therefore, $P$ is incompressible.

Next, note that a $\bdy$-compressing disk for $P$ is also a compressing disk for $\bdy B-T_j$ for $j=1$ or $2$.  Thus if there exists a $\bdy$-compressing disk for $P,$ we will have the same contradiction as above unless $T_j$ is a rational tangle.  In this case, $T_j$ must be an integral tangle and the sum is trivial.  Hence $P$ is $\bdy$-incompressible.

Finally, suppose $D'$ is a compressing disk for $\bdy B - T$.  Since $P$ is essential, by Lemma \ref{lem: boundary reducing disk with empty intersection AND sphere}, we can find a compressing disk $D''$ such that $D'' \cap P = \emptyset$.  Thus $D''$ is a compressing disk for $\bdy B_k-T_k$ for $k=1$ or $2,$ and $\ell(T_k)<n$.  Since $T$ is a minimal counterexample, $T_k$ cannot be a nontrivial tangle sum, so it is a rational tangle $T(p/q)$.  Since $D''$ is disjoint from $P$, by Lemma \ref{lem: pq tangle}, we must have $q=0$, so $T_k$ is a trivial tangle.  This contradicts the assumption that $T=T_1 +_{\Delta} T_2$ is a nontrivial sum.
\end{proof}

\begin{prop}
Arborescent tangle complements are irreducible.
\label{prop: arb tangle complements irred}
\end{prop}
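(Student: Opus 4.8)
The plan is to prove irreducibility by induction on the length $\ell(T)$, arranging the argument so that essentially all of the work is carried by the two results already in hand: Proposition \ref{prop: P essential and bdy sphere incompressible}, which tells us that the twice-punctured disk $P(\Delta)$ of a nontrivial sum is essential in $X(T)$, and the sphere half of Lemma \ref{lem: boundary reducing disk with empty intersection AND sphere}, which lets us make a reducing sphere disjoint from a given essential surface. The induction will follow the recursive definition of arborescent tangles exactly as in the proof of Proposition \ref{prop: P essential and bdy sphere incompressible}: a length-$1$ arborescent tangle is rational, and a length-$n$ arborescent tangle with $n \geq 2$ is a nontrivial sum $T = T_1 +_\Delta T_2$ of arborescent tangles of strictly smaller length.

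First I would dispatch the base case $\ell(T)=1$, where $T = T[p/q]$ is rational and therefore consists of two arcs with no closed components. Since a rational tangle is homeomorphic to the trivial tangle $T[0]$, whose exterior $E(T[0])$ is a genus-$2$ handlebody and hence irreducible, it suffices to push any embedded sphere into the exterior. Given an embedded sphere $S \subset X(T)$, note that $S \cap T = \emptyset$ automatically (as $T \not\subset X(T)$), so by compactness of $S$ we may shrink $N(T)$ until $S \subset E(T)$; irreducibility of the handlebody then produces a ball in $E(T) \subset X(T)$ bounded by $S$. Thus $X(T)$ is irreducible in the base case.

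For the inductive step I would write $T = T_1 +_\Delta T_2$ as a nontrivial sum of arborescent tangles $T_1,T_2$ of smaller length, so that $X(T_1)$ and $X(T_2)$ are irreducible by the inductive hypothesis, and argue by contradiction. Suppose $X(T)$ is reducible. By Proposition \ref{prop: P essential and bdy sphere incompressible} the twice-punctured disk $P = P(\Delta)$ is essential in $X(T)$, so Lemma \ref{lem: boundary reducing disk with empty intersection AND sphere} supplies a reducing sphere $S$ with $S \cap P = \emptyset$. Cutting $X(T)$ along $P$ recovers $X(T_1)$ and $X(T_2)$, so a sphere disjoint from $P$ must lie entirely in one of them, say $X(T_1)$. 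But $X(T_1)$ is irreducible, whence $S$ bounds a ball in $X(T_1) \subset X(T)$, contradicting the choice of $S$ as a reducing sphere. Therefore $X(T)$ is irreducible, completing the induction. Note that this step handles closed components of $T$ for free: any circles sit inside $T_1$ or $T_2$ and never interfere with the separation along $P$.

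The inductive step is genuinely short once the two cited results are available, so the one point requiring real care is the base case, specifically the identification of the trivial-tangle exterior with a genus-$2$ handlebody and the verification that an arbitrary sphere in the open complement $X(T)$ can be isotoped off $N(T)$ into $E(T)$. I would also want to confirm that the sphere version of Lemma \ref{lem: boundary reducing disk with empty intersection AND sphere} is legitimately applied to the non-compact manifold $X(T)$; since the spheres and the surface $P$ in question are compact this presents no obstacle, and if one prefers, the entire argument can be transported to the compact exterior $E(T)$ without change.
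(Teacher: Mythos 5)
Your proof is correct and takes essentially the same route as the paper's: induction on $\ell(T)$ with the rational (handlebody) base case, then using Proposition \ref{prop: P essential and bdy sphere incompressible} together with Lemma \ref{lem: boundary reducing disk with empty intersection AND sphere} to isotope a reducing sphere off $P(\Delta)$ and apply the inductive hypothesis on one side. The additional care you take in the base case (pushing the sphere into $E(T)$) and in checking that the ball in $X(T_i)$ sits inside $X(T)$ simply makes explicit what the paper leaves implicit.
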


\begin{proof}
If $T$ is rational then $E(T)$ is a handlebody and hence $X(T)$ is irreducible.  Suppose the result holds for any arborescent tangle $T$ such that $\ell(T) \leq n$.  Suppose $T'$ is an arborescent tangle such that $\ell(T') =n+1$ and suppose there exists an essential sphere, $S \cont X(T')$.  Write $T'=T_1 +_{\Delta} T_2$, where the sum is non-trivial.  By Lemma \ref{lem: boundary reducing disk with empty intersection AND sphere} and Proposition \ref{prop: P essential and bdy sphere incompressible}, we can find an essential sphere $S'$ which does not intersect $P(\Delta)$.  However, by the inductive hypothesis, this cannot happen.
\end{proof}

\subsection{Standard torus and standard annulus in $Q_n$}
\label{subsection: std torus and annulus}

\begin{defin}
A curve $\alpha$ on a planar surface $F$ is of \emph{type I} (resp. \emph{type II}) means it bounds a once-punctured (resp. twice-punctured) disk on $F$.   Suppose $A \cont E(T)$ is an annulus with $\bdy A \cont \bdy B$.  We call $A$ a \emph{type I annulus} (resp \emph{type II annulus}) exactly when $\bdy_i A$ is a type I (resp type II) curve on $\bdy B-T$ for $i=1,2$.  Furthermore, we note that there are two kinds of type I annuli.  We call $A$ a \emph{type I-A annulus} exactly when $A$ is a type I annulus such that $\bdy_1 A \parallel \bdy_2 A$ on $\bdy B-T$; We say that $A$ is a \emph{type I-B annulus} exactly when $\bdy_1 A \not \parallel \bdy_2 A$ on $\bdy B-T$.   (See Figure \ref{fig: annulus type}.)
\end{defin}

\begin{figure}[htbp]
\centering
\includegraphics[width=3.5in]{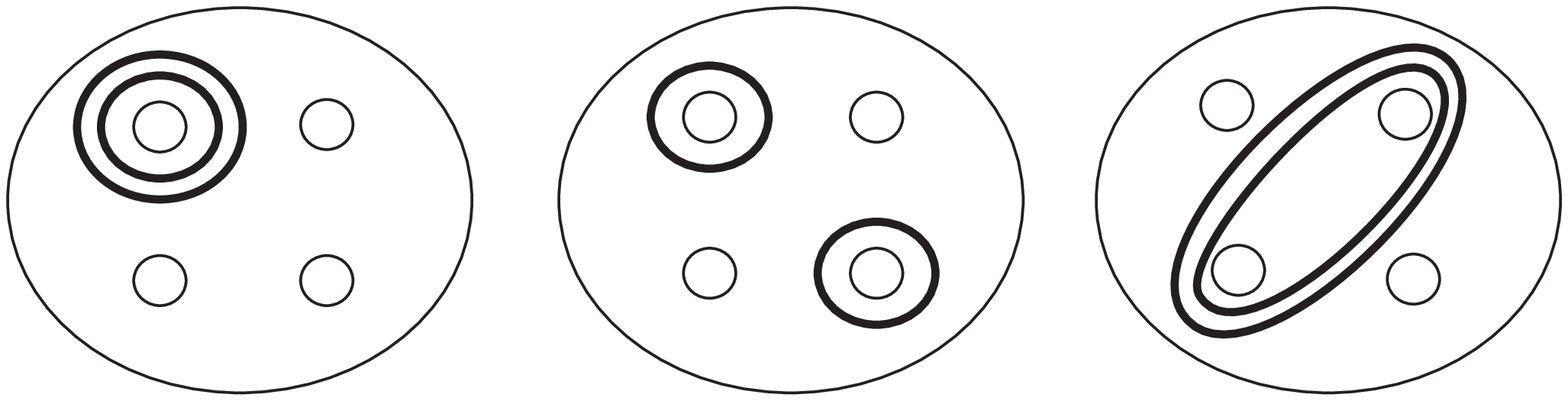}
\put(-245,-10){Boundary of a} \put(-250, -20){type IA annulus}
\put(-155,-10){Boundary of a} \put(-160, -20){type IB annulus}
\put(-65,-10){Boundary of a} \put(-70, -20){type II annulus}
\caption{The boundary of the different types of annuli as viewed on $S=\bdy B-T$.}
\label{fig: annulus type}
\end{figure}

\begin{lem}
  Suppose \tsum\ is an arborescent tangle, $A$ is an annulus in $X(T)$
  with $\bdy A \cont (\bdy B - T)$, and $\bdy A \cap \bdy \Delta =
  \emptyset$.  Then $(A \cap P(\Delta)) \sqcup (A \cap (\bdy B-T))$ cannot contain curve components of both types I and II.
\label{lem: can't have intersections of types I and II}
\end{lem}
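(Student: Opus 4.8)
The plan is to detect the type of each curve by a mod-$2$ linking number with the strings of $T$, and then to observe that all the curves in the set $(A\cap P(\Delta))\sqcup\bdy A$ are mutually parallel inside the annulus $A$, so they must all carry the same linking number.

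First I would set up the invariant. Since $B$ is a ball, $H_1(X(T);\mathbb{Z}/2)=H_1(B-T;\mathbb{Z}/2)$ is freely generated by the meridians of the components of $T$, so there is a well-defined homomorphism $\lambda\colon H_1(X(T);\mathbb{Z}/2)\to\mathbb{Z}/2$ sending each meridian to $1$; concretely, $\lambda([\gamma])$ is the number of intersection points of $T$ with any $2$-chain in $B$ bounded by $\gamma$, counted mod $2$. The key computation is this: if $c$ is a simple closed curve on $\bdy B-T$ (resp.\ on $P(\Delta)$) that bounds a subdisk of $\bdy B$ (resp.\ of $\Delta$) meeting $T$ in $n$ points, then pushing that subdisk slightly into $B$ realizes $c$ as the boundary of a disk crossing $T$ exactly $n$ times, so $\lambda(c)=n\bmod 2$. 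Note this works uniformly for the four endpoint-punctures of $\bdy B-T$ and for the two interior gluing-point punctures of $P(\Delta)$. In particular a type I curve (one enclosed puncture) has $\lambda=1$, while a type II curve and an inessential curve (two or zero enclosed punctures) have $\lambda=0$.

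Next I would normalize the intersection pattern. Isotope $A$, fixing $\bdy A$, so as to minimize $|A\cap P(\Delta)|$; such isotopies can be supported in $\Int X(T)$ and hence preserve the types of the curves of $\bdy A$ on $\bdy B-T$ as well as the hypothesis $\bdy A\cap\bdy\Delta=\emptyset$. A standard innermost-circle argument then shows that no component of $A\cap P(\Delta)$ bounds a disk in $A$: an innermost such disk would, by the incompressibility of $P(\Delta)$ (Proposition \ref{prop: P essential and bdy sphere incompressible}), be capped off by a disk on $P(\Delta)$, and the resulting sphere would bound a ball by the irreducibility of $X(T)$ (Proposition \ref{prop: arb tangle complements irred}), yielding a further reduction of $|A\cap P(\Delta)|$, a contradiction. (Note that this step uses only properties of $P(\Delta)$ and $X(T)$, not any assumption that $A$ itself is essential.) Hence after the isotopy every component of $A\cap P(\Delta)$ is essential in the annulus $A$, i.e.\ parallel to its core.

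Finally I would combine the two observations. After the reduction, every curve of the set $(A\cap P(\Delta))\sqcup\bdy A$ is parallel to the core of $A$: the two boundary circles are cores by definition, and each interior intersection curve is core-parallel by the previous step. Consequently all of these curves are homologous in $X(T)$ and share one common value $\ell:=\lambda\in\mathbb{Z}/2$. Since any type I curve among them forces $\ell=1$ while any type II curve forces $\ell=0$, the collection cannot contain curves of both types, which is exactly the claim. The main obstacle I anticipate is the careful bookkeeping of the first paragraph: verifying that $\lambda$ is genuinely well-defined on $H_1(X(T);\mathbb{Z}/2)$ and that the push-off computation gives the enclosed-puncture parity correctly for both the boundary $4$-punctured sphere and the interior twice-punctured disk $P(\Delta)$, whose punctures are of different geometric origin (tangle endpoints versus gluing points).
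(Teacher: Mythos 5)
Your proof is correct, and it is at heart the same mod-$2$ parity obstruction as the paper's, but packaged differently. The paper argues geometrically: after reducing to the case where the type I curve $\alpha_1$ and the type II curve $\alpha_2$ are disjoint essential circles on $A$, it takes the sub-annulus $A' \cont A$ they cobound, caps $\alpha_1$ with the once-punctured disk $D_1$ and $\alpha_2$ with the twice-punctured disk $D_2$ on $\Delta \cup \bdy B$, and observes that the sphere $D_1 \cup A' \cup D_2$ would meet $T$ in three points, which is impossible. Your homomorphism $\lambda$ is precisely that impossibility promoted to an invariant: well-definedness of $\lambda$ is the statement that every closed surface in $B$ meets $T$ an even number of times (each arc of $T$ has both endpoints on $\bdy B$ and each circle is closed, so each component crosses a separating closed surface evenly), and ``all core-parallel curves of $A$ share one value of $\lambda$'' replaces the paper's explicit choice of sub-annulus and capping disks. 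What your version buys is economy and robustness: you never have to locate the sub-annulus between the two offending curves, and the contradiction becomes a one-line homology computation that applies simultaneously to boundary curves and interior intersection curves.

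One mismatch deserves attention: the lemma is a statement about the intersection curves of the \emph{given} annulus $A$, but your normalization isotopes $A$ to minimize $|A \cap P(\Delta)|$, which changes the set $A \cap P(\Delta)$; strictly speaking you prove the lemma only for annuli in minimal position. (This is how the lemma is applied in the paper, and the paper's own ``we may assume'' glosses the same point, so this is not a fatal objection.) In fact your tools close the gap with no isotopy at all: if a curve $c$ of $A \cap P(\Delta)$ bounds a disk in $A$, that disk is an embedded disk in $X(T)$ with boundary $c$, so on one hand $\lambda(c)=0$, ruling out type I; on the other hand type II curves are essential on the twice-punctured disk $P(\Delta)$, and incompressibility of $P(\Delta)$ (Proposition \ref{prop: P essential and bdy sphere incompressible}, via the standard innermost-circle surgery) shows no essential curve of $P(\Delta)$ bounds an embedded disk in $X(T)$, ruling out type II. Hence every type I or type II curve in $(A \cap P(\Delta)) \sqcup \bdy A$ is automatically core-parallel in $A$, and your final paragraph applies verbatim to the original annulus. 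This also sidesteps the one soft spot in your reduction step: when you push $A$ across the ball bounded by the sphere obtained from Proposition \ref{prop: arb tangle complements irred}, that ball may contain other sheets of $A$, so the claimed reduction of $|A \cap P(\Delta)|$ needs the usual extra care; dropping the isotopy as above makes the issue moot.
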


\begin{proof}
Suppose we have an annulus $A$ such that $(A \cap P(\Delta)) \sqcup (A \cap (\bdy B-T))$ contains curves $\alpha_1$ and $\alpha_2$ with $\alpha_1$ a type I curve and $\alpha_2$ a type II curve.  By an innermost disk/outermost arc argument, we may assume that $\alpha_1$ and $\alpha_2$ are disjoint essential circles on $A$.  Thus, there must be an annulus $A' \cont A$ such that $\alpha_1=\bdy _1 A'$ bounds a disk $D_1$ on $\Delta \cup \bdy B$ that intersects the tangle in one point and $\alpha_2=\bdy _2 A'$ bounds a disk $D_2$ on $\Delta \cup \bdy B$ that intersects the tangle in two points (see Figure \ref{fig: annulusT12}).  Thus we have a sphere $S=(D_1 \cup A' \cup D_2) \cont B$ that intersects the tangle in three points. This is impossible.
\end{proof}

\begin{figure}[htbp]
\centering
\includegraphics[width=2in]{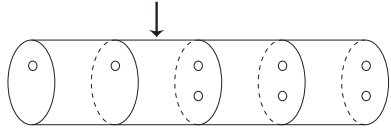}
\caption{The arrow points to $S=D_1 \cup A' \cup D_2$.}
\label{fig: annulusT12}
\end{figure}

Note that in the proof, $T_1$ could be trivial, in which case Lemma \ref{lem: can't have intersections of types I and II} says that every annulus $A \cont E(T)$ with $\bdy A \cont \bdy B-T$ such that $\bdy_i A$ does not bound a disk on $\bdy B - T$ for $i=1,2,$ is of type I or type II.

\begin{prop}
Suppose $T$ is an arborescent tangle and $A$ is an incompressible annulus of type I in $X(T)$.  If $A$ is of type I-A then $A$ is parallel to the annulus $A' \cont \bdy B-T$ with $\bdy A' =\bdy A$.  If $A$ is of type I-B then $A = \bdy N(t_i)$ for some string $t_i \in T$.  In particular, $X(T)$ contains no essential annulus of type I.
\label{prop: type I annulus}
\end{prop}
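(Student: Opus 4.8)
The plan is to cap off the annulus $A$ along the once-punctured disks that its boundary curves bound, reduce to a $2$-sphere meeting $T$ in at most two points, and then play the incompressibility of $A$ against the irreducibility of $X(T)$ (Proposition \ref{prop: arb tangle complements irred}). Since each $\bdy_i A$ is a type I curve, it bounds a once-punctured disk $\delta_i \cont \bdy B$ whose puncture is an endpoint $p_i$ of some string, so $\bdy_i A$ is a meridian of that string; write $\gamma$ for the core of $A$. I would treat the two types separately.

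For type I-B the two curves encircle distinct punctures, so $p_1 \neq p_2$ and the disks $\delta_1,\delta_2$ are disjoint; hence $\Sigma = A \cup \delta_1 \cup \delta_2$ is an embedded $2$-sphere in $B$ meeting $T$ transversally in $\{p_1,p_2\}$, and it cuts $B$ into two balls. I first argue that $p_1,p_2$ are the two endpoints of a \emph{single} string: if they lay on different strings, then one of these balls would contain no arc of $T$ in its interior, and $\gamma$ would bound a disk there disjoint from $T$, compressing $A$ — contrary to hypothesis. So $p_1,p_2$ are the endpoints of one string $t_i$, which runs through one of the balls $R$. On the torus $\bdy(R - N(t_i))$ the annuli $A$ and $\bdy N(t_i)$ are the two complementary annuli, with the same meridional core, so they are isotopic and $A = \bdy N(t_i)$.

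For type I-A the two curves encircle the \emph{same} puncture, so they are parallel on $\bdy B - T$ and cobound an unpunctured annulus $A' \cont \bdy B - T$ with $\bdy A' = \bdy A$. Then $\mathcal{T} = A \cup A'$ is a torus bounding a region $R$ (the side adjacent to $A' \cont \bdy B$) whose interior misses $T$. Incompressibility of $A$ shows $\gamma$ cannot bound a compressing disk in $R$; granting that $R$ is a solid torus, its meridian must then be the slope meeting $\gamma$ once, whence $R \cong A \times I$ and $A$ is parallel to $A'$. This already gives the two halves of the statement: a type I-A annulus is $\bdy$-parallel, hence not essential, while a type I-B annulus is $\bdy N(t_i)$, which is $\bdy$-compressible once the string $t_i$ is boundary-parallel; so $X(T)$ contains no essential annulus of type I.

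The main obstacle is exactly the two places where I wrote ``granting that $R$ is a solid torus'' and ``once $t_i$ is boundary-parallel'': both reduce to showing that the tangle strings carry no local knotting, for a nontrivially knotted $R$ would make $A$ essential and contradict the conclusion. A secondary nuisance is that a closed component of $T$ could a priori sit inside the capped ball or region and link $\gamma$, breaking the compression step. I would handle both uniformly by inducting on the length $\ell(T)$: writing \tsum\ and putting $A$ in minimal position with respect to $P(\Delta)$, Lemma \ref{lem: can't have intersections of types I and II} forces every curve of $A \cap P(\Delta)$ to be of type I, so cutting $A$ along $P(\Delta)$ (keeping things disjoint via Lemma \ref{lem: boundary reducing disk with empty intersection AND sphere}) produces type I pieces in $X(T_1)$ and $X(T_2)$ to which the inductive hypothesis applies; the base case is a rational tangle, where the two strings are visibly boundary-parallel and the classification — and with it the solid-torus and $\bdy$-compressibility claims above — is immediate.
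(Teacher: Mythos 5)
Your proposal is correct and is essentially the paper's own proof: your capping arguments (the sphere $A \cup \delta_1 \cup \delta_2$ for type I-B, the torus $A \cup A'$ for type I-A) are exactly the paper's base case for rational tangles, and the two gaps you flag --- local knotting of the strings and closed components trapped in the capped region --- are closed by precisely the induction the paper runs: put $A$ in minimal position with respect to $P(\Delta)$, use Lemma \ref{lem: can't have intersections of types I and II} to force every piece to be a type I annulus, discard type I-A pieces by minimality (they are parallel into $P(\Delta)$ by the inductive hypothesis), and glue the remaining $\bdy N(t_i)$ pieces back into $\bdy N(t)$ for a string $t$ of $T$. The only slip worth noting is the citation: achieving minimal position is a routine innermost-circle/outermost-arc argument, not an application of Lemma \ref{lem: boundary reducing disk with empty intersection AND sphere}, which produces reducing disks and spheres disjoint from a fixed essential surface rather than isotoping an annulus with respect to $P(\Delta)$.
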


\begin{proof}
We proceed by induction on the length of the tangle.  Suppose $T=(t_1 \cup t_2)$ is a rational tangle and $A$ is an incompressible annulus of type I in $X(T)$.  Define $P_1$ and $P_2$ as the respective left and right disks of the boundary sphere $\bdy B$.   

Suppose $A$ is a type I-A annulus with $\bdy_1 A, \  \bdy_2 A \cont P_1$.  Consider the disk $A \cup D$ such that $D \cont B,$ $\bdy D = \bdy_1 A$ and ${\rm Int} D \cap A = \emptyset$.  Push $D$ to ${\rm Int} B$ to get a disk $D_1 \cong A \cup D$ which intersects $T$ at a single point.  Since $T$ is trivial, $D_1$ cuts off a ball $B_1$ such that $B_1 \cap T$ is a single unknotted arc $\tau$.  Thus after removing a regular neighborhood of $\tau,$ we get a solid torus bounded by $A \cup A'$ where $A'$ is an annulus on $\bdy B$.  Hence $A \parallel A'$.

If $A$ is of type I-B, then by definition, $\bdy_i A$ bounds a disk $D_i \cont \bdy B$ that intersects the tangle in exactly one point for $i=1,2$.  Thus $A \cup D_1 \cup D_2$ is a sphere intersecting $T$ in exactly two points.  It must be the case that $D_1$ and $D_2$ intersect the tangle in a single point from the same string, $t$.  Since $T$ is rational, the string $t$ is unknotted and therefore $A = \bdy N(t)$.   

Suppose $T = T_1 +_{\Delta} T_2$ is a nontrivial sum and the result holds for any incompressible annulus of type I in $E(T_i)$ for $i=1,2$.  Suppose $A$ is an incompressible annulus of type I in $E(T)$.  Let $P=P(\Delta)$ and consider the intersection $A \cap P$ with minimal number of components.  If $A \cap P =\emptyset$ then the result follows by induction.  Suppose $A \cap P \neq \emptyset$.  By Lemma \ref{lem: can't have intersections of types I and II}, $P$ cuts $A$ into type I annulus components $A_1,A_2,...,A_n$.  If $A_i$ is a type I-A component for some $i \in \{1,2,...,n\},$ then by induction, $A_i$ is parallel to an annulus on $P,$ hence we may reduce the intersection by an isotopy, contradicting the minimality of $|A \cap P|$.  Thus $A_i$ is a type I-B component for $i=1,2,...,n$.  By induction, $A_i=\bdy N(t_i)$ for some string $t_i$ in $T_1$ or $T_2$.  Gluing the components back together, the result follows.
\end{proof}

In particular, Proposition \ref{prop: type I annulus} tells us that for an arborescent tangle $T,$ any essential annulus $A$ in $E(T)$ with $\bdy A \cont \bdy B$ must be a type II annulus.

Although $T(1/2, 1/2)=T[1/2] +_{\Delta} T[1/2]$ is equivalent to $\thh=T[1/2] +_{\Delta} T[-1/2]$ up to isotopy rel $\Delta,$ it is important to recognize some special properties of the latter tangle.  In particular, we want to develop a new notation to describe the sum:  $T_1 +_{\Delta'} T_2$ where $T_1=T_2=\thh$ and ${\Delta'}$ is the bottom disk of $B_1$ and the top disk of $B_2$.  We denote this sum by $\thh \ast \thh,$ i.e., this is the tangle where the $\thh$ tangle is glued on top of another  $\thh$ tangle.  In general, we call such a new tangle $T_1 \ast T_2$ the \emph{product} of tangles $T_1$ and $T_2$.  See Figure \ref{fig: specialSum}.  Note that Kauffman and Lambropoulou \cite{KauffmanL} use this notation and terminology in combining rational tangles.

\begin{figure}[hbtp]
\centering
\includegraphics[width=2in]{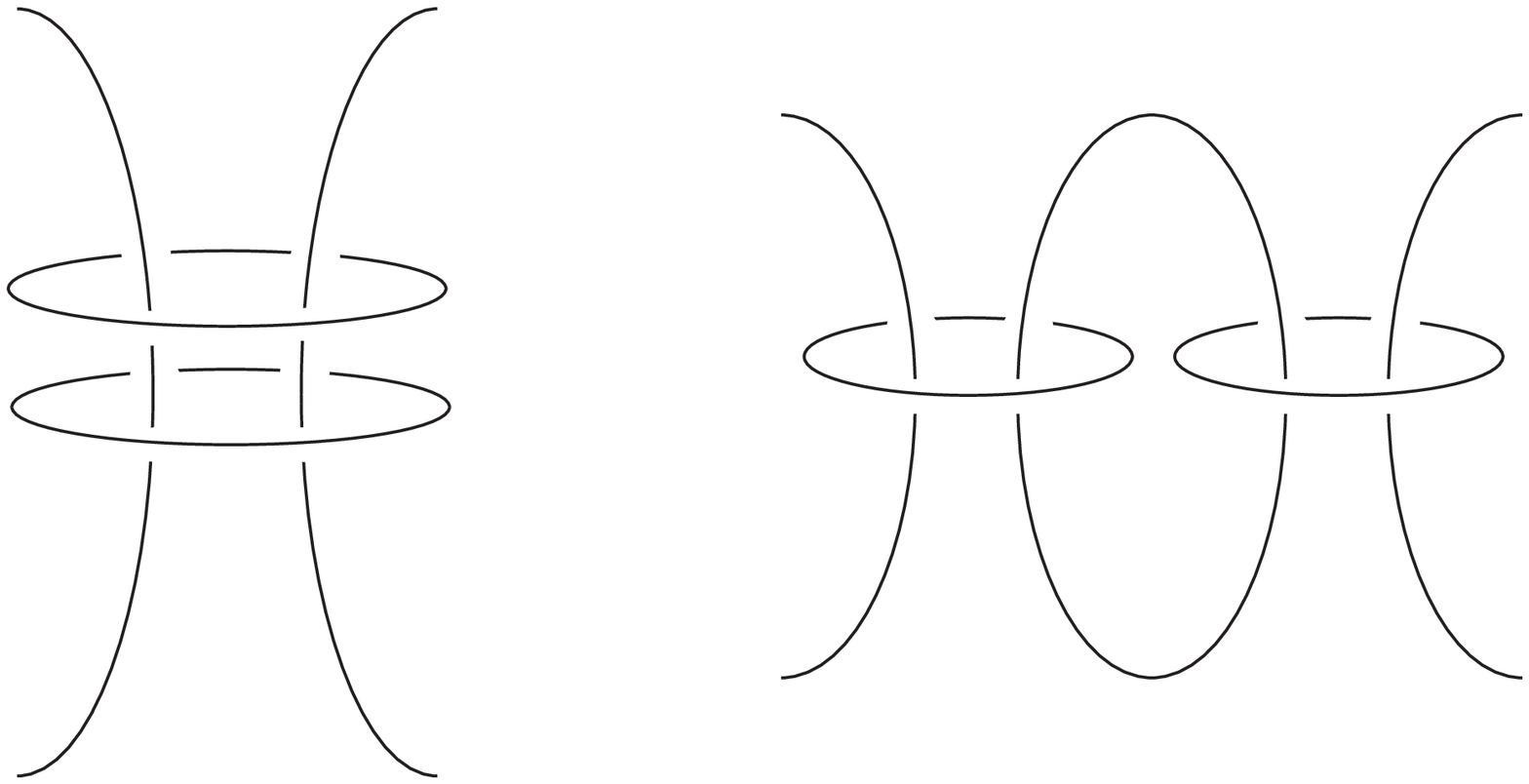}
\caption{$\thh \ast \thh$ versus the Montesinos tangle $\thh +\thh=T(1/2, -1/2, 1/2, -1/2)$}
\label{fig: specialSum}
\end{figure}

This leads to some new terms:

\begin{defin}
\label{def: Q_n tangle, std torus, etc}
  (1) For $n \geq 0$, define $Q_0$ to be the trivial tangle
  $T[\infty]$, $Q_1=T(1/2,-1/2)$, and $Q_n=Q_1 \ast Q_{n-1}$.  Thus
  $Q_n$ is the tangle with two vertical strings and $n$ parallel
  horizontal circle components $c_i$, $i=1,...,n$.  For each $c_i$,
  there exists a horizontal annulus $A_i$ such that $A_i \cap T =
  \bdy_0 A_i=c_i$, and $\bdy_1 A_i \cont (\bdy B - Q_n)$, where $(\bdy
  B - Q_n)$ is the punctured boundary sphere for the tangle $Q_n$.
  See Figure \ref{fig: Q3 and annulus and torus} for an example.

\begin{figure}[hbtp]
  \centering
  \includegraphics[width=4in]{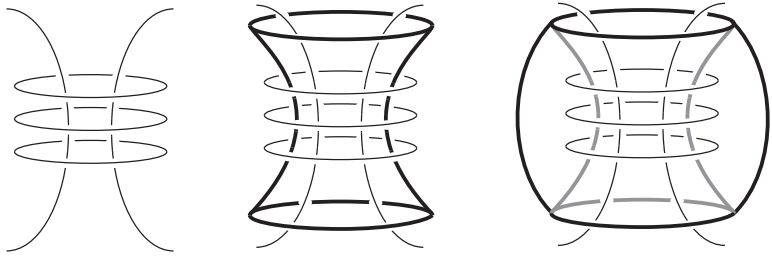}
  \caption{The diagram on the left demonstrates the $Q_3$ tangle.  The
    bold annulus in the middle is the standard annulus for $Q_3$ while
    the diagram on the right demonstrates the standard torus for
    $Q_3$.}
\label{fig: Q3 and annulus and torus}
\end{figure}

(2) For $n \geq 1$, define the \emph{standard annulus} in $Q_n$ as
  the annulus in $Q_n$ which separates the circles from the arcs of
  $Q_n$, as in Figure \ref{fig: Q3 and annulus and torus}.

  (3) For $n \geq 1$, let $A$ be the standard annulus in $Q_n$.  Let
  $A'$ be the annulus on $\bdy B$ with $\bdy A'=\bdy A$.  Define the
  \emph{standard torus} in $Q_n$ as the torus obtained by pushing $A
  \cup A'$ into the interior of $X(Q_n)$.  (Note for $n=1$ this torus
  is inessential since it cuts off a cups in $X(T)$.) See Figure
  \ref{fig: Q3 and annulus and torus} for an example.

(4) Since $T \ast Q_1 = Q_1 \ast T$ up to an isotopy rel $\bdy B$,
  we define \emph{switching} as changing the order of $T$ and $Q_1$.
  See Figure \ref{fig: switching} for an example.
\end{defin}

\begin{figure}[hbtp]
\centering
\includegraphics[width=2.5in]{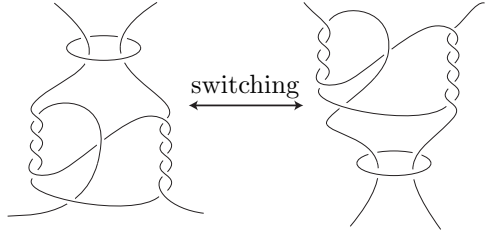}
\put(-111,52){switching}
\caption{The tangle on the left and the tangle on the right are equivalent by switching.}
\label{fig: switching}
\end{figure}

\begin{lem}
If $A$ is a type II essential annulus in $X(T)$ where \tsum\ is a nontrivial sum, and $A$ intersects $P(\Delta)$ in a minimal nonempty collection of essential arcs on $P(\Delta),$ then $T = T(1/2) +_{\Delta} T(1/2)$ up to isotopy rel $\bdy \Delta$ and $A$ is the standard annulus for $Q_1$.
\label{lem: type II essential annulus/arcs}
\end{lem}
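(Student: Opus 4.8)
The plan is to isotope $A$ into the given minimal position with respect to $P = P(\Delta)$ and to study the rectangles into which the essential arcs of $A \cap P$ cut $A$. The goal is to force each such rectangle to be a compressing disk for one of the two summand complements $X(T_1), X(T_2)$, to read off the slopes of $T_1, T_2$ from Lemma \ref{lem: pq tangle}, and finally to reassemble $A$ and recognize it as the standard annulus of $Q_1$.

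First I would record the local picture. Each arc $\gamma$ of $A \cap P$ is properly embedded in $P$, so its endpoints lie on $\partial P = \partial \Delta$; since these endpoints also lie on $A$ and $A \cap \partial B = \partial A$, they lie on $\partial A \cap \partial \Delta$. By hypothesis each $\gamma$ is essential on the twice-punctured disk $P$, hence separates its two punctures. I claim each $\gamma$ is moreover a spanning arc of the annulus $A$. If some $\gamma$ had both endpoints on one component of $\partial A$, then choosing it outermost on $A$ yields a disk $D_A \cont A$ with $\partial D_A = \gamma \cup (\text{arc of } \partial A)$ and $\Int D_A \cap P = \emptyset$; as $\gamma$ is essential on $P$ and the other arc lies on $\partial X(T)$, $D_A$ would be a $\partial$-compressing disk for $P$, contradicting the $\partial$-incompressibility of $P$ established in Proposition \ref{prop: P essential and bdy sphere incompressible}. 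Thus the arcs are parallel spanning arcs that cut $A$ cyclically into rectangles $R_1, \dots, R_k$. Crossing each arc switches between $X(T_1)$ and $X(T_2)$, so the $R_j$ alternate sides; hence $k$ is even and equally many rectangles lie in each side.

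Next I would show each rectangle forces its side to be rational. Each $R_j \cont X(T_i)$ is a disk whose boundary lies entirely on $\partial X(T_i)$ (two opposite sides on $P$, the other two on the complementary twice-punctured disk $P_i^c$), so $R_j$ is a properly embedded disk with boundary on the four-punctured sphere $\partial X(T_i)$. If $\partial R_j$ bounded an unpunctured disk there, then by irreducibility (Proposition \ref{prop: arb tangle complements irred}) $R_j$ would be $\partial$-parallel and isotopic away, lowering $|A \cap P|$; if it bounded a once-punctured disk, capping off would yield a sphere meeting $T$ in one point, impossible by parity. So $\partial R_j$ is essential and $R_j$ is a genuine compressing disk, whence by Proposition \ref{prop: P essential and bdy sphere incompressible} the tangle $T_i$ is not a nontrivial sum, i.e.\ $T_i = T[p_i/q_i]$ is rational. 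Since rectangles occur in both sides, both $T_1$ and $T_2$ are rational. Now each rectangle has exactly four corners, all lying on $\alpha = \partial \Delta$, so $|\partial R_j \cap \alpha| = 4$. In the standard position of the Conway sum, $\alpha$ is a vertical circle of $T_i$; minimality of $|A \cap P|$ makes this the minimal geometric intersection number, since the two $P$-sides are essential and an inessential $P_i^c$-side would be a trivial excursion of $\partial A$ across $\alpha$ that could be pushed back, reducing $|A \cap P|$. Hence by Lemma \ref{lem: casson and bleiler} there are no bigons, and Lemma \ref{lem: pq tangle} gives $4 = |\partial R_j \cap \alpha| = 2 q_i$, so $q_i = 2$. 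As $(p_i, q_i) = 1$ forces $p_i$ odd, $T_i = T[p_i/2] \equiv T[1/2]$ rel $\partial \Delta$. Finally, since a rational tangle complement has a unique compressing disk up to isotopy, all rectangles in a given side are isotopic, and a standard product-region argument between two parallel ones contradicts minimality unless there is a single rectangle per side; thus $k = 2$. Reassembling the two rectangles along the two arcs recovers $A$ as the horizontal annulus of $T[1/2] +_\Delta T[1/2] = T(1/2,1/2) \cong Q_1$, that is, the standard annulus of Definition \ref{def: Q_n tangle, std torus, etc}.

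The main obstacle, I expect, is the bookkeeping that converts minimality of $|A \cap P|$ into the two quantitative facts I rely on: that the $P_i^c$-sides of each rectangle are essential (so the corner count $4$ is the minimal intersection with $\alpha$ and Lemma \ref{lem: pq tangle} delivers $q_i = 2$), and that two parallel rectangles in the same side can always be canceled (so $k = 2$). Both are standard innermost/outermost isotopies, but they must be carried out so that every isotopy of $A$ stays in $X(T)$ and genuinely lowers the arc count. A secondary point requiring care is the normalization placing $\alpha$ as a vertical rather than a horizontal circle of each $T_i$, which is exactly what distinguishes the desired conclusion $T_i = T[1/2]$ from an unwanted $T[2/q_i]$.
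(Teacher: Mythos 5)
Your proposal tracks the paper's proof almost step for step up to the identification of the two summands: the arcs of $A \cap P(\Delta)$ are essential on both surfaces, adjacent arcs cut $A$ into rectangles alternating between $X(T_1)$ and $X(T_2)$, each rectangle is a compressing disk for the relevant four-punctured sphere (forcing $T_i$ rational by Proposition \ref{prop: P essential and bdy sphere incompressible}), and the four-corner count $|\bdy R_j \cap \bdy\Delta| = 4$ combined with Lemma \ref{lem: pq tangle} gives $q_i = 2$, hence $T_i = T(1/2)$ rel $\bdy\Delta$. All of that is sound and is exactly the paper's route, including the appeal to uniqueness of the compressing disk in a rational tangle complement to conclude that the rectangles on each side are mutually parallel.

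The genuine gap is your final step, where you claim that two parallel rectangles on the same side can be canceled by ``a standard product-region argument,'' forcing one rectangle per side. As stated, this move fails: isotoping $R_a$ through the product region $W$ between $R_a$ and its parallel partner $R_b$ never carries any part of $A$ across $P(\Delta)$, so it cannot reduce $|A \cap P|$; worse, $R_b$ is itself a subdisk of the same embedded annulus, so $R_a$ cannot be pushed past it without creating self-intersections, and you are not free to isotope a subdisk of $A$ independently of the rest. The paper closes this step not by cancelation but by a connectivity argument: the arcs of $A \cap P$ form a nested parallel family on $\Delta$, and since the disks on each side are parallel, the two arcs outermost in this family must belong to a \emph{single} disk $D_{2r-1}$ on the $T_1$ side and a \emph{single} disk $D_{2s}$ on the $T_2$ side; then $D_{2r-1} \cup D_{2s}$ is a closed-up component of $A$, and connectedness of $A$ forces $A = D_{2r-1} \cup D_{2s}$, i.e.\ $|A \cap P| = 2$. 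Substituting that argument for your cancelation claim repairs the proof; everything else, including the reassembly of the two rectangles into the standard annulus of $Q_1$, goes through as you wrote it.
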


\begin{proof}
By assumption, $A$ intersects $P=P(\Delta)$ in essential arcs on $P$.  Since $P$ is essential by Proposition \ref{prop: P essential and bdy sphere incompressible}, these arcs are also essential on $A$.  Let $S_i=\bdy B_i- T_i,$ and $P_i=\overline{S_i - P},$ $i=1,2$.

Suppose $|A \cap P| > 0$.  Since $P$ is separating, $|A \cap P|$ is even.  Let $\alpha_1, \alpha_2,...,\alpha_n$ be arcs in $A \cap P$ such that $\alpha_j$ is adjacent to $\alpha_{j+1}$ on $A$ for $j=1,2,...,n$ (where we define $\alpha_{n+1}=\alpha_1$).  Then $\alpha_j \cup \alpha_{j+1}$ cuts off a disk $D_j$ from $A$ with  $\bdy D_j=\alpha_j \cup \beta_j \cup \alpha_{j+1} \cup \gamma_j$ where $\bigcup \beta_j = \bdy_1 A,$ $\bigcup \gamma_j = \bdy_2 A$.  Recall $\alpha_j$ and $\alpha_{j+1}$ are essential arcs on $P_i$ for $i=1$ or $2,$ and ${\rm Int} D_j \cap P=\emptyset$.  Thus, $\bdy D_j$ is an essential curve on $S_i$ for $i=1$ or $2$ and $D_j$ is a compressing disk for $S_i$.  Furthermore, $D_j$ intersects $P$ in two arcs.  Without loss of generality, suppose $D_1$ is such a disk in $B_1-T_1$.  By Proposition \ref{prop: P essential and bdy sphere incompressible}, $T_1$ is rational, and by Lemma \ref{lem: pq tangle}, $T_1=T(1/2)$.  Similarly, the disk $D_2$ is a compressing disk for $S_2$ intersecting $P$ in two arcs.  By the same argument, $T_2=T(1/2)$.

Since the compressing disk in $B_i-T_i$ is unique up to isotopy, $D_1, D_3,...D_{n-1}$ are all parallel in $B_1-T_1$ while $D_2,D_4,...,D_n$ are parallel in $B_2-T_2$.  Thus the two arcs of $P \cap A$ which are outermost on $\Delta$ belong to the same disk $D_{2r-1}$ in $A \cap (B_1-T_1),$ and they belong to the same disk $D_{2s}$ in $A \cap (B_2-T_2)$ for some $r,s,$ so $D_{2r-1} \cup D_{2s}$ is a component of $A$.  Since $A$ is connected, $D_{2r-1} \cup D_{2s}=A,$ hence $|A \cap P|=2$.  By construction, one can see that $A$ is the standard annulus for $Q_1$.  
\end{proof}

\subsection{Essential tori}
\label{subsection: essential tori}

\begin{defin}
Given a curve $\alpha$ on $\bdy B$ separating $\bdy B$ into two twice-punctured disks, we say that $T$ is \textbf{$\alpha$-annular} exactly when there exists an essential annulus $A$ with $\bdy A \parallel \alpha$.  Otherwise, we say that $T$ is \textbf{$\alpha$-anannular}.
\end{defin}

\begin{lem}
Suppose $T$ is  an arborescent tangle.

(1)  If $A$ is an inessential annulus of type II in $B-T$ and $\bdy A$ does not bound a disk in $B-T,$ then $A$ is parallel to the annulus on $\bdy B-T$ bounded by $\bdy A$.

(2)  Suppose $F$ is an essential annulus or torus in $E(T),$ \tsum, $\bdy F \cap \bdy \Delta = \emptyset,$ and $|F \cap P(\Delta)|$ is minimal.  Then each component of $F \cap E(T_i)$ is essential.

(3)  Let \tsum, where $T_2$ is \bpann.  Suppose $F$ is an essential torus in $E(T)$.  Up to isotopy, $F \cont E(T_i)$ for $i=1$ or $2$.
\label{lem: three parts to show that essential torus is on one side or other for bpann}
\end{lem}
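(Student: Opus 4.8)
The plan is to prove the three parts in order, with (1) and (2) feeding the case analysis in (3).

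For part (1), I would first argue that $A$ is incompressible: compressing $A$ along any compressing disk produces two disks bounded by $\bdy_1 A$ and $\bdy_2 A$, so each $\bdy_i A$ would bound a disk in $B-T$, contrary to hypothesis. Being inessential and incompressible, $A$ is either $\bdy$-parallel or $\bdy$-compressible. The two type II curves $\bdy_1 A,\bdy_2 A$ are disjoint on the four-punctured sphere $\bdy B-T$, hence parallel, and the unique unpunctured annulus $A'\cont \bdy B-T$ with $\bdy A'=\bdy A$ is the product region between them; so if $A$ is $\bdy$-parallel it is parallel to $A'$, as claimed. In the $\bdy$-compressible subcase, a $\bdy$-compression bands $\bdy_1 A$ to $\bdy_2 A$ across this product region and yields a disk whose boundary encircles all four punctures, hence is trivial on $\bdy B-T$ and bounds a disk $D_0$ there; capping off with $D_0$ and invoking irreducibility (Proposition \ref{prop: arb tangle complements irred}) gives the ball exhibiting $A\parallel A'$.

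For part (2), recall that $P=P(\Delta)$ is essential by Proposition \ref{prop: P essential and bdy sphere incompressible}. With $|F\cap P|$ minimal, the usual innermost-disk/outermost-arc argument (as in Lemma \ref{lem: boundary reducing disk with empty intersection AND sphere}) shows that no component of $F\cap P$ is inessential on $F$ or on $P$, so all intersection circles are essential on both surfaces. A compressing disk for a piece $F_j\cont F\cap E(T_i)$ would, by incompressibility of $F$, bound a disk on $F$ and permit a reduction of $|F\cap P|$; a $\bdy$-compression of $F_j$ likewise contradicts the $\bdy$-incompressibility of $F$ or the essentiality of $P$. Hence each $F_j$ is incompressible and $\bdy$-incompressible. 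Finally, since $\bdy F_j\cont P$, a boundary-parallel $F_j$ is parallel to an annulus of $\bdy E(T_i)$ meeting $P$, and pushing $F_j$ across the parallelism reduces $|F\cap P|$; so each $F_j$ is in fact essential.

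For part (3), note that $F$ is a closed torus, so $\bdy F=\emptyset$ and $F\cap P$ consists of circles; isotope $F$ to minimize $|F\cap P|$. If $F\cap P=\emptyset$ then $F\cont E(T_1)$ or $E(T_2)$ and we are done, so suppose not. By part (2) the intersection circles are essential on $F$, hence mutually parallel on the torus, and every component of $F\cap E(T_i)$ is an essential annulus. Each intersection circle is essential on the twice-punctured disk $P$, so is type I or type II there, and by Lemma \ref{lem: can't have intersections of types I and II} all are of a single type. If they are type I, then some piece is an essential annulus in $E(T_i)$ with type I boundary on $\bdy B_i-T_i$, contradicting Proposition \ref{prop: type I annulus}, which asserts that there is no essential annulus of type I. If they are type II, then a piece lying on the $T_2$ side is an essential annulus in $E(T_2)$ whose boundary is parallel to $\bdy\Delta$, so that $T_2$ is $\bdy\Delta$-annular, contradicting the hypothesis. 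Either way $F\cap P=\emptyset$, so $F$ lies in a single $E(T_i)$.

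The crux is part (2), specifically showing that the pieces are essential rather than merely incompressible and $\bdy$-incompressible: it is the non-$\bdy$-parallel conclusion that powers both contradictions in part (3). The delicate case is a piece isotopic to some $\bdy N(t_i)$, which is boundary-parallel and must be shown to reduce $|F\cap P|$; here minimality of the intersection, together with the classification of boundary-parallel annuli from part (1) and Proposition \ref{prop: type I annulus}, must be applied with care.
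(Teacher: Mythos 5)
Your proposal is correct and follows the paper's architecture quite closely. Part (1) is argued exactly as in the paper: a compression of $A$ would make a component of $\bdy A$ bound a disk in $B-T$; a $\bdy$-compressing arc must run across the unique annulus $A' \cont \bdy B - T$ between the two parallel type II curves, the resulting disk has trivial boundary, and capping it off and invoking irreducibility (Proposition \ref{prop: arb tangle complements irred}) yields the parallelism. Parts (2)--(3) use the same scheme as the paper: minimize $|F \cap P(\Delta)|$, show the pieces are essential type II annuli, and derive the final contradiction with the $\bdy \Delta$-anannularity of $T_2$. One redistribution is actually an improvement: you establish the type II restriction in part (3) by combining Lemma \ref{lem: can't have intersections of types I and II} with Proposition \ref{prop: type I annulus}, whereas the paper asserts the type II conclusion inside part (2) with only the terse remark that $F$ is not boundary-parallel. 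Do note, however, that Lemma \ref{lem: can't have intersections of types I and II} is stated for annuli with boundary on $\bdy B - T$; you should remark that its proof (two adjacent intersection curves of different types cobound a subannulus of $F$ which caps off to a sphere meeting $T$ in three points) applies verbatim to a closed torus.

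The one step that is mis-stated is your justification of $\bdy$-incompressibility in part (2): a $\bdy$-compression of a piece $F_j$ does not contradict the ``$\bdy$-incompressibility of $F$'' --- this is vacuous when $F$ is a closed torus, which is precisely the case needed in part (3) --- nor does it contradict the essentiality of $P(\Delta)$, since the disk's arc $\alpha$ lies on $F$ rather than on $\bdy X(T)$, so it is not a compressing or $\bdy$-compressing disk for $P(\Delta)$. The correct resolution, which is the paper's and which your closing paragraph in effect supplies, is to route this case through part (1): the piece is incompressible (by your first bullet), so if it is $\bdy$-compressible then by part (1) it is boundary-parallel, and pushing it across $P(\Delta)$ violates minimality of $|F \cap P(\Delta)|$; alternatively, in the torus case the $\bdy$-compression itself merges the two circles of $F \cap P(\Delta)$ meeting $\bdy \alpha$ and lowers the intersection count directly. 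With that step repaired as you indicate, the proof is complete and is essentially the paper's.
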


\begin{proof}
(1):  Let $T$ be an arborescent tangle such that $A$ is an inessential annulus of type II in $B-T$ and $\bdy A$ does not bound a disk in $B-T$.  Suppose $A$ is compressible with compressing disk $C$.  Then $\bdy C$ cuts $A$ into two components, $A_1$ and $A_2$.  Thus $C \cup A_1$ is a compressing disk for $\bdy B-T,$ contradicting the fact that $\bdy A$ does not bound a disk in $B-T$. 

Suppose $A$ is $\bdy$-compressible.  Let $D \cont E(T)$ be the boundary compressing disk with $\bdy D = \alpha \cup \beta,$ $\alpha=D \cap A,$ $\beta=D \cap \bdy(B-T),$ and $\alpha$ is essential in $A$.  Note that $\beta$ must lie in the annulus $A' \cont \bdy(B-T)$ with $\bdy A'=\bdy A$.  In this case, boundary compress $A$ along $D$ to get a disk $D'$ which has inessential boundary, $\alpha ',$ bounding a disk $D''$ on $ \bdy(B-T)$.  Since the tangle complement is irreducible by Proposition \ref{prop: arb tangle complements irred}, the sphere $D' \cup D''$ bounds a ball.  Therefore, $D' \parallel D''$ and hence $A$ is parallel to the annulus on $ \bdy(B-T)$ bounded by $\bdy A$. 

(2): Let $F$ be an essential annulus or torus in $E(T),$ \tsum\ an arborescent tangle, $\bdy F \cap \bdy \Delta = \emptyset,$ and $|F \cap P(\Delta)|$ minimal.  Since $\bdy F \cap \bdy \Delta = \emptyset,$ $F$ intersects $P=P(\Delta)$ in circles.  These circles are essential on both $F$ and $P$ by a standard innermost disk/outermost arc argument.  Thus $P$ cuts $F$ into annulus components $F_{i_j} \cont E(T_i)$ with $\bdy F_{i_j} \cont P$, unless $F$ is an annulus, in which case one of the boundary components of two of the sub-annuli $F_{i_j}$ are contained in $\bdy B-T$ (not on $P$).   
Since $F$ is not boundary-parallel, each component $F_{i_j}$ is a type II annulus in $E(T_i)$.  By Part (1), if any such component $F_{i_j}$ is inessential, we can isotope it just past $P$ to reduce $|F \cap P|,$ contradicting minimality.

(3): Suppose \tsum, with $T_2$  \bpann\ and $F$ an essential torus in $E(T)$.  By Part (2), if $F \cap P \neq \emptyset,$ $P$ cuts $F$ into essential type II annulus components $F_{i_j}$ with $\bdy F_{i_j} \cont P=P(\Delta)$ and $\bdy F_{i_j} \parallel \bdy \Delta$.  This contradicts the fact that $T_2$ is \bpann.  Thus $F \cap P = \emptyset$.
\end{proof}

\begin{lem}
Let \tsum\ be a nontrivial sum of arborescent tangles.  If $T_2$ is \bpann\ then any annulus $A$ with $\bdy A \cont P(\Delta)$ can be isotoped into $E(T_1)$.  Thus if $T_1$ is also \bpann\ then \tsum\ is \bpann.
\label{lem: isotope bpann}
\end{lem}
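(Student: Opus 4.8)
The plan is to make $A$ transverse to $P=P(\Delta)$, minimize $|A\cap P|$, and then remove the portions of $A$ lying in $E(T_2)$ by pushing them across $P$ into $E(T_1)$, using that $T_2$ is \bpann. The structural point I would exploit is that, since $\bdy\Delta$ lies on $\bdy B$ and $\Delta$ is a gluing disk of $(B_2,T_2)$, the twice-punctured disk $P$ is properly embedded in $E(T)$ and is a subsurface of the four-punctured sphere $\bdy B_2-T_2=\bdy E(T_2)$, with $\bdy\Delta$ one of its boundary circles; consequently every sub-annulus of $A$ lying in $E(T_2)$ with boundary on $P$ is an annulus in $B_2-T_2$ whose boundary curves lie on $\bdy B_2-T_2$, so the earlier classification results apply to it. If $\bdy A$ is inessential on $P$ the claim is immediate from irreducibility (Proposition \ref{prop: arb tangle complements irred}), so I would assume each component of $\bdy A$ is essential on $P$; then any compressing disk for $A$ would display an essential curve of $P$ bounding a disk, contradicting Proposition \ref{prop: P essential and bdy sphere incompressible}, so $A$ is automatically incompressible.

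First I would minimize $|A\cap P|$. Since $\bdy A\cont P$, the intersection $A\cap P$ consists of circles, and by the standard innermost-circle argument of Lemma \ref{lem: boundary reducing disk with empty intersection AND sphere} --- using that $P$ is incompressible and $X(T)$ irreducible --- every such circle may be taken essential on both $A$ and $P$. Being essential on the annulus $A$, these circles are parallel to its core, hence to $\bdy A$; they cut $A$ into parallel sub-annuli that lie alternately in $E(T_1)$ and $E(T_2)$, each with boundary on $P$.

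Next I would treat a sub-annulus $A'\cont E(T_2)$. By the sphere-building argument of Lemma \ref{lem: can't have intersections of types I and II}, its two boundary circles cannot be of different types on $\bdy B_2-T_2$. If they are type II, then $A'$ is a type II annulus with $\bdy A'\parallel\bdy\Delta$, which cannot be essential in $E(T_2)$ because $T_2$ is \bpann; hence Part (1) of Lemma \ref{lem: three parts to show that essential torus is on one side or other for bpann} makes $A'$ parallel to the annulus $R'\cont P$ bounded by $\bdy A'$ (the degenerate possibility that $\bdy A'$ bounds a disk is ruled out by Lemma \ref{lem: pq tangle} and the nontriviality of the sum). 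If they are type I, then $A'$ is an incompressible type I annulus, and Proposition \ref{prop: type I annulus} again forces $A'$ to be parallel to an annulus $R'\cont P$ (the alternative $A'=\bdy N(t_i)$ cannot occur, since $\bdy A'$ lies in $\Int P$). In either case, choosing $A'$ with $R'\cont P$ innermost, I would isotope $A'$ across $R'$ and just past $P$ toward the $E(T_1)$ side, strictly decreasing $|A\cap P|$. Iterating pushes every sub-annulus lying in $E(T_2)$ across $P$, so that finally $A\cont E(T_1)$.

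For the second assertion, suppose $T_1$ is also \bpann\ yet $T$ is $\bdy\Delta$-annular, witnessed by an essential annulus $A$ in $E(T)$ with $\bdy A\parallel\bdy\Delta$. Running the same pushing argument (the boundary curves being parallel to $\bdy\Delta=\bdy P$) isotopes $A$ into $E(T_1)$. There $A$ inherits incompressibility and $\bdy$-incompressibility from $E(T)$, and it cannot be $\bdy$-parallel in $E(T_1)$: a parallel annulus avoiding $P$ would already make $A$ $\bdy$-parallel in $E(T)$, while one meeting $P$ would let $A$ be pushed into $E(T_2)$, where, since $T_2$ is \bpann, $A$ could not be essential, contradicting that $A$ is essential in $E(T)$. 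Thus $A$ is an essential annulus in $E(T_1)$ with $\bdy A\parallel\bdy\Delta$, so $T_1$ is $\bdy\Delta$-annular --- a contradiction; hence $T$ is \bpann. The \emph{main obstacle} is the third step: one must use $P\cont\bdy E(T_2)$ to recognize each $E(T_2)$-sub-annulus as a type I or type II annulus in $B_2-T_2$, rule out essentiality (via \bpann) and compressibility, and deduce $\bdy$-parallelism into $P$ from the prior classification lemmas. Keeping the curve-type bookkeeping consistent through the successive isotopies, and verifying that essentiality survives the final push into $E(T_1)$, are the delicate points.
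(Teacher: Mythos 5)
Your strategy coincides with the paper's: put $A$ in minimal position with respect to $P=P(\Delta)$, observe that the pieces of $A$ on the $T_2$ side are annuli in $E(T_2)$ with boundary on $P$, use the \bpann\ hypothesis to forbid essential pieces there, and push boundary-parallel pieces across $P$. The paper packages exactly these steps as parts (1) and (2) of Lemma \ref{lem: three parts to show that essential torus is on one side or other for bpann}, which you largely re-derive, and your treatment of the second assertion likewise matches the paper's in substance.

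However, there is a genuine gap in your type I analysis. You dispose of the type I-B alternative of Proposition \ref{prop: type I annulus} by asserting that $A'=\bdy N(t_i)$ ``cannot occur, since $\bdy A'$ lies in $\Int P$.'' This is false: the two boundary circles of $\bdy N(t_i)$ encircle the punctures at the endpoints of $t_i$, and a string of $T_2$ may have \emph{both} of its endpoints on the gluing disk $\Delta$, in which case both circles lie in $\Int P$. This already happens in the first nontrivial example to which the lemma applies: in $Q_1=T(1/2)+_{\Delta}T(-1/2)$ the closed component $c$ meets $\Delta$ twice, so $t_2=c\cap B_2$ is a string of $T_2$ with both endpoints on $\Delta$; here $T_2$ is rational, hence \bpann, and $A=\bdy N(t_2)$ is an annulus with $\bdy A\cont P$ whose unique piece on the $T_2$ side is of type I-B. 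Such an annulus is not parallel to any annulus in $P$, so your pushing step has nothing to act on and the induction on $|A\cap P|$ stalls precisely there. Two repairs are available: (a) when $\bdy A$ is parallel to $\bdy\Delta$ --- the type II case, which is the only one the paper ever uses and the one relevant to the \bpann\ conclusion --- the sphere-building argument of Lemma \ref{lem: can't have intersections of types I and II} forbids mixing types, so every curve of $A\cap P$ is of type II and type I pieces never arise (this is implicitly how the paper's proof proceeds, as it treats $A$ as a type II annulus before invoking Lemma \ref{lem: three parts to show that essential torus is on one side or other for bpann}); or (b) for a genuinely type I $A$, eliminate a $\bdy N(t_i)$-parallel piece not by a parallelism into $P$ but by sliding the sleeve along the component of $T$ containing $t_i$, across the two punctures of $\Delta$ and into $B_1$ (as one slides $\bdy N(t_2)$ around $c$ in the $Q_1$ example); this is an ambient isotopy in $X(T)$ reducing $|A\cap P|$ by two. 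As written, your proof needs one of these two fixes.
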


\begin{proof}
  Since $T$ is a nontrivial sum, it is not a rational tangle.  If $A$
  is inessential then by Lemma \ref{lem: three parts to show that essential torus
  is on one side or other for bpann} (1) it is boundary parallel; 
  since $A$ is a type II annulus and $\bdy A \cap \bdy \Delta = \emptyset$, it
  can be isotoped into $B_2 - T_2$.  

  If $A$ is essential then by Lemma \ref{lem: three parts to show that essential torus
  is on one side or other for bpann} (2), up to isotopy we may assume
  that each component of $A \cap X(T_i)$ is essential, but since $T_2$
  is \bpann, no such component exists in $X(T_2)$.  Therefore
  $A\subset X(T_1)$.

  If $T_1$ is \bpann, then the annulus $A$ must be parallel in
  $X(T_1)$ to the annulus on $\bdy X(T_1)$ bounded by $\bdy A$.
  Hence $T$ is \bpann.
\end{proof}

\begin{lem}
\label{lem: only ess annulus in Q1 is the std annulus}
The only type II essential annulus in $B - Q_1$ is the standard annulus. 
\end{lem}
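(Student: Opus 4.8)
The plan is to use the decomposition $Q_1 = T(1/2) +_{\Delta} T(1/2)$ (recall that $\thh$ is isotopic rel $\Delta$ to $T(1/2) +_{\Delta} T(1/2)$), set $P = P(\Delta)$, and study a type II essential annulus $A$ by making $|A \cap P|$ minimal over all isotopies of $A$. A standard innermost-disk/outermost-arc argument lets me assume every component of $A \cap P$ is essential on both $A$ and $P$, and that $\bdy A \cap \bdy \Delta = \emptyset$ whenever no arc is present. The argument then splits according to whether $A \cap P$ contains an arc.

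If $A \cap P$ contains an arc, then after discarding inessential components $A$ meets $P$ in a minimal, nonempty collection of essential arcs, and Lemma \ref{lem: type II essential annulus/arcs} applies directly to conclude that $A$ is the standard annulus for $Q_1$; moreover the standard annulus genuinely realizes this case, since it meets $P$ in two arcs. So the remaining content is to rule out the possibilities that $A$ is disjoint from $P$ or that $A \cap P$ consists only of circles. The crux is the claim that the complement $X(T(1/2))$ of a single rational tangle contains no essential type II annulus. Granting this, both cases collapse: if $A \cap P = \emptyset$ then $A \cont X(T(1/2))$ for one side, contradicting the claim; and if $A \cap P$ is a nonempty family of essential circles, these are parallel copies of the core of $A$, so by Lemma \ref{lem: three parts to show that essential torus is on one side or other for bpann}(2) each component of $A \cap E(T_i)$ is an essential type II annulus in $E(T_i) = E(T(1/2))$, again impossible. (Lemma \ref{lem: can't have intersections of types I and II} ensures the circles are all of one type, so that the components are indeed type II.) Hence only the arc case survives and $A$ is the standard annulus.

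To prove the claim I would use $\bdy$-reducibility. For the rational tangle $T(1/2)$, the disk $D_0$ separating the two strings is a compressing disk for $\bdy B - T$, so $X(T(1/2))$ is $\bdy$-reducible, and by the discussion preceding Lemma \ref{lem: pq tangle} this compressing disk is unique up to isotopy. If $A$ were an essential type II annulus, then since $A$ is essential Lemma \ref{lem: boundary reducing disk with empty intersection AND sphere} produces a $\bdy$-reducing disk $D$ with $D \cap A = \emptyset$; by uniqueness $D$ is isotopic to $D_0$, so $A$ lies in one of the two pieces obtained by cutting $X(T(1/2))$ along $D_0$. Each such piece is the complement of a single unknotted arc in a ball, whose exterior is a solid torus, and in a solid torus every incompressible, $\bdy$-incompressible annulus is $\bdy$-parallel. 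Tracing this parallelism back through $D_0$ would show that $A$ is $\bdy$-parallel in $X(T(1/2))$, contradicting essentiality.

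The main obstacle is precisely this last claim, and within it the $\bdy$-incompressibility bookkeeping when passing to the piece cut off by $D_0$: one must verify that a $\bdy$-compression of $A$ inside the solid-torus piece cannot run essentially across the cutting disk $D_0$ in a way that fails to yield a $\bdy$-compression in $X(T(1/2))$, and that the $\bdy$-parallelism obtained in the piece survives regluing along $D_0$. Once the claim is in hand, the rest of the proof is a direct appeal to Lemmas \ref{lem: type II essential annulus/arcs}, \ref{lem: can't have intersections of types I and II}, and \ref{lem: three parts to show that essential torus is on one side or other for bpann}(2).
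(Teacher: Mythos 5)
Your overall architecture is sound and genuinely departs from the paper in the hard case. The paper splits on arcs versus circles exactly as you do, and handles the arc case by Lemma \ref{lem: type II essential annulus/arcs}; but in the circle/empty case it never decomposes along $P$ at all. Instead it introduces the auxiliary annulus $A'$ running from the circle component of $Q_1$ to the horizontal circle on $\bdy B$, notes that the type II curves of $\bdy A$ are parallel to the vertical circle $\bdy \Delta$ and so must meet $\bdy_2 A'$, and then derives a contradiction from $A \cap A'$: its arcs must be essential on $A'$ (an outermost inessential arc would either $\bdy$-compress $A$ or reduce $|A \cap A'|$), yet every arc has both endpoints on $\bdy_2 A'$ because $A$ is disjoint from $\bdy_1 A' \cont T$, hence is inessential. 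Your route instead reduces everything to the claim that $X(T(1/2))$ contains no essential type II annulus, which is true, and your use of Lemma \ref{lem: can't have intersections of types I and II} and Lemma \ref{lem: three parts to show that essential torus is on one side or other for bpann} Part (2) to produce essential type II pieces in the two halves is legitimate.

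There are two gaps. The one you flag is real, but you are working harder than necessary: you never need to cut along $D_0$, so the scar bookkeeping never arises. In every case that reaches your claim, the boundary curves of the annulus in question are parallel to $\bdy \Delta$ in $\bdy X(T(1/2))$: a circle of $A \cap P$ that is essential on the twice-punctured disk $P$ and of type II must enclose both punctures, hence is parallel to $\bdy \Delta$; and a type II curve of $\bdy A$ disjoint from $\bdy \Delta$ lies in $P_i$ and must enclose both of its punctures, so it too is parallel to $\bdy \Delta$. Now if such an annulus were essential in $X(T(1/2))$, Lemma \ref{lem: boundary reducing disk with empty intersection AND sphere} gives a compressing disk $D$ disjoint from it, and $\bdy D$ is isotopic to $\bdy D_0$ by the uniqueness you cite; but Lemmas \ref{lem: pq tangle} and \ref{lem: casson and bleiler} show the minimal geometric intersection of $\bdy D_0$ with a vertical circle is $2q = 4$ for $T(1/2)$, so $\bdy D$ cannot be disjoint from a curve parallel to $\bdy \Delta$ --- contradiction, with no solid-torus analysis at all. (Even for an arbitrary type II boundary class, disjointness from $\bdy D$ would force $\bdy A \parallel \bdy D_0$, since disjoint type II curves on a 4-punctured sphere are parallel; then the core of $A$ bounds a disk and $A$ is compressible.) The second gap is unflagged: when $A \cap P = \emptyset$ you cannot simply say that $A \cont X(T(1/2))$ contradicts the claim, because essentiality of $A$ in $X(Q_1)$ does not pass to the half; $A$ might be inessential in $B_i - T_i$. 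Here you need Lemma \ref{lem: three parts to show that essential torus is on one side or other for bpann} Part (1): since $\bdy X(Q_1)$ is incompressible by Proposition \ref{prop: P essential and bdy sphere incompressible}, $\bdy A$ bounds no disk, so an inessential $A$ is parallel to the annulus on $\bdy B_i - T_i$ bounded by $\bdy A$; as both curves of $\bdy A$ lie in $P_i$ parallel to $\bdy \Delta$, that annulus lies in $P_i \cont \bdy B - T$, making $A$ $\bdy$-parallel in $X(Q_1)$, a contradiction. With these two repairs your proof is complete.
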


\begin{proof}
Let $A$ be an essential annulus in $B - Q_1$, where $Q_1=T_1 +_{\Delta} T_2$, $T_i=T(1/2)$ for $i=1,2$, and $P=P(\Delta)$.  Suppose $A$ intersects $P$ transversely and the number of components $|A \cap P|$ is minimal.  By a standard innermost circle/outermost arc argument, the components of $A \cap P$ are either all circles or all arcs, essential on both $A$ and $P$.  If the latter is true then we can apply Lemma \ref{lem: type II essential annulus/arcs} to get the result.

Assume that $A \cap P$ is all circles, thus $\bdy A$ is disjoint from $\bdy \Delta$.  Let $A'$ be an annulus in $B$ with $\bdy_1 A' = A' \cap T$ the circle component of $Q_1$, and $\bdy_2 A'$ the horizontal circle on $\bdy B$.  Since $A$ is of type II, the two components in $\bdy A$ are parallel to the vertical circle $\bdy \Delta$, so $\bdy A
\cap \bdy_2 A' \neq \emptyset$.  Suppose the number of components $|A \cap A'|$ is minimal, and denote by $C$ the arc components of $A \cap A'$.

If there exists an arc component $\xi \subset C$ such that $\xi$ is an
inessential outermost arc on $A'$, then $\xi$ cuts off an outermost
disk $X$ from $A'$.  If $\xi$ is essential on $A$, then $X$ is a
$\bdy$-compressing disk for $A$, contradicting the fact that $A$ is
essential.  If $\xi$ is inessential then as above we may reduce the
number of components in the intersection $|A \cap A'|$.  Thus we may
assume that $C$ consists of essential arcs on $A'$.  On the other
hand, since $A \cap \bdy_1 A' = \emptyset$ and $\bdy A \cap \bdy_2 A'
\neq \emptyset$, $C$ is nonempty, and each component of $C$ has both
endpoints on $\bdy_2 A'$ and hence is inessential on $A'$, which is a
contradiction.
\end{proof}

\begin{lem}
  Given the $Q_n$ tangle, $n \geq 1$ and any type II essential annulus
  $A \cont B - Q_n$, there is an isotopy of $Q_n$ so that $Q_n=Q_m
  \ast Q_{n-m}$ and $A$ is the standard annulus in $Q_m$.
\label{lem: Qn annulus standard}
\end{lem}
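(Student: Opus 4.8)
The plan is to induct on $n$. The base case $n=1$ is exactly Lemma~\ref{lem: only ess annulus in Q1 is the std annulus}, together with the observation that $Q_1 = Q_1 \ast Q_0$, so $m=1$ works. For $n \ge 2$ I would write $Q_n = Q_1 +_{\Delta} Q_{n-1}$ as the nontrivial product that splits off the top circle $c_1$, set $P = P(\Delta)$, and isotope the type II essential annulus $A$ so that $|A \cap P|$ is minimal. A standard innermost-circle/outermost-arc argument makes every component of $A \cap P$ essential on both $A$ and $P$, and all of the same kind. If the components were arcs, Lemma~\ref{lem: type II essential annulus/arcs} would force $Q_n = T(1/2) +_{\Delta} T(1/2)$, hence $Q_n \cong Q_1$, which is impossible for $n \ge 2$; so $A \cap P$ consists of circles (possibly none). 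Since $A$ is of type II, Lemma~\ref{lem: can't have intersections of types I and II} shows each of these circles is itself of type II, i.e.\ parallel to $\bdy \Delta$ on $P$.

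Next I would cut $A$ along $P$. By Lemma~\ref{lem: three parts to show that essential torus is on one side or other for bpann}(2) every resulting piece is an essential type II annulus lying in $E(Q_1)$ or in $E(Q_{n-1})$, and consecutive pieces along $A$ alternate between the two sides. Any piece contained in $B_1 - Q_1$ is an essential type II annulus in $B - Q_1$, so by the base case (Lemma~\ref{lem: only ess annulus in Q1 is the std annulus}) it must be the standard annulus of the top $Q_1$. Because $\Delta$ lies below $c_1$, that standard annulus has exactly one boundary circle parallel to $\bdy\Delta$ (the one below $c_1$, on $P$) and the other above $c_1$ on $\bdy B - \Delta$; in particular it cannot have both boundary circles on $P$. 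Hence no \emph{interior} piece of the cut-up $A$ (one with both boundary circles on $P$) can lie in $B_1 - Q_1$, for it would then be inessential, contradicting Lemma~\ref{lem: three parts to show that essential torus is on one side or other for bpann}(2). Combined with the alternation this leaves only $A\cap P=\emptyset$, a single intersection circle, or a configuration with two parallel copies of the standard annulus of $Q_1$ in $E(Q_1)$; the last is excluded because those parallel copies would permit an isotopy reducing $|A\cap P|$, against minimality. Thus either $A \cap P = \emptyset$, or $A$ meets $P$ in a single circle cutting off the standard annulus $A_0$ of the top $Q_1$ from a remaining annulus contained in $E(Q_{n-1})$.

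Finally I would apply the inductive hypothesis to the part $A'$ of $A$ lying in $B_2 - Q_{n-1}$, which is an essential type II annulus in $B - Q_{n-1}$ (with $\bdy A' \cont \bdy B - Q_{n-1}$, one component on $P$ when $A$ meets $P$). Induction gives an isotopy after which $Q_{n-1} = Q_{m'} \ast Q_{n-1-m'}$ and $A'$ is the standard annulus of $Q_{m'}$. Regluing $A_0$ across the $P$-circle and using switching ($T \ast Q_1 = Q_1 \ast T$ rel $\bdy B$, Definition~\ref{def: Q_n tangle, std torus, etc}(4)) to bring the relevant block of circles to the top, I recognize $A$ as the standard annulus of $Q_m$ in a regrouping $Q_n = Q_m \ast Q_{n-m}$, with $m = m'$ in the disjoint case and $m = m'+1$ when $A$ crosses $P$. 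I expect the main obstacle to be precisely this intersection analysis and reassembly: pinning down the number of circles in $A \cap P$ and verifying that the piece recovered from $E(Q_1)$ together with the inductively standard piece in $E(Q_{n-1})$ glue to the standard annulus \emph{at the correct level}, not to some other type II annulus. The two facts that make this go through are the uniqueness of the essential annulus in $B - Q_1$ and that switching is an isotopy rel $\bdy B$, so it transports the pieces without disturbing $\bdy A$ or the gluing circle; a minor separate point is the $\bdy$-parallel case when $A \cap P = \emptyset$, where one must rule out a parallelism whose annulus on $\bdy(B_1 - Q_1)$ runs over $P$.
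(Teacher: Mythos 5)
Your base case, the reduction to the circles-only case via Lemma \ref{lem: type II essential annulus/arcs}, and the use of Lemmas \ref{lem: can't have intersections of types I and II} and \ref{lem: three parts to show that essential torus is on one side or other for bpann}(2) are all sound, but the crux of your argument --- bounding $|A \cap P|$ by one --- rests on a false claim. You assert that the standard annulus of the top $Q_1$ ``has exactly one boundary circle parallel to $\bdy \Delta$'' and hence cannot occur as an interior piece with both boundary circles on $P$. In fact \emph{both} boundary components of the standard annulus are isotopic to each other and to $\bdy \Delta$ on the $4$-punctured sphere $\bdy B_1 - Q_1$: the region of the sphere between the circle encircling the two top punctures and $\bdy \Delta$ contains no punctures at all (this parallelism is precisely what makes the standard torus construction work, since $\bdy_1 A \parallel \bdy_2 A$ cobound the annulus $A'$ on $\bdy B$). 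Consequently the standard annulus of $Q_1$ \emph{can} be positioned with both boundary circles on $P$: run a tube around the two strands starting at a circle of $P$ around its two punctures, up along the strands, flare outward near the top of $B_1$, and back down just inside $\bdy B_1$ to a circle of $P$ near $\bdy \Delta$. This annulus is isotopic to the standard annulus, hence essential in $B_1 - Q_1$ (it traps $c_1$ between itself and $\bdy B_1$), and it is an interior piece. So your exclusion of interior pieces on the $Q_1$ side fails, and with it the count $|A\cap P| \le 1$.

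The gap is not cosmetic, because such configurations genuinely occur: in $Q_2 = Q_1 \ast Q_1$, take the standard annulus of the top $Q_1$ and slide both of its boundary circles down $\bdy B$ past $\bdy \Delta$ to below the level of $c_2$. The result is an essential annulus meeting $P$ in two circles in which \emph{every} piece is essential on its side --- the two pieces in $B_2$ are nested parallel copies of the standard annulus of the bottom $Q_1$, and the interior piece in $B_1$ is the annulus described above. Thus ``all pieces essential'' does not certify minimality of $|A \cap P|$; the isotopy that removes the two intersection circles slides $\bdy A$ along $\bdy B$ globally, and is supplied neither by Lemma \ref{lem: three parts to show that essential torus is on one side or other for bpann}(2) nor by your remark that two parallel copies in $E(Q_1)$ ``permit an isotopy reducing $|A\cap P|$'' (parallel essential pieces do not by themselves yield a reduction; in the arc case of Lemma \ref{lem: type II essential annulus/arcs} the paper concludes via connectivity of $A$, not via a reduction). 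This is exactly the difficulty the paper's proof avoids: it never cuts along $P$ or inducts on $n$, but instead isotopes $A$ off the horizontal annuli $A_i$ joining each circle $c_i$ to $\bdy B$ by cut-and-paste, identifies $B - \Int N(A)$ as a $D^2 \times I$ containing the strands plus a solid torus, and reads off the standard position of $A$ from the pattern of nested circles $\bdy_1 A_i$ on $\bdy B$. To repair your induction you would essentially have to reprove that nested-circle analysis at the inductive step, so I recommend adopting the direct argument instead.
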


\begin{proof}
  Let $n \geq 1$ and suppose $A \cont B - Q_n$ is an essential annulus
  of type II with $\bdy A \cont \bdy B -Q_n$.  Let the $n$ core circle
  components of $Q_n$ be represented by $c_i$, $i=1,2,..,n$.  Recall
  that for each $c_i$, there exists an annulus $A_i$ such that $\bdy_0
  A_i=c_i$, $\bdy_1 A_i \cont (\bdy B - Q_n)$, where $(\bdy B - Q_n)$
  is the boundary sphere for the tangle $Q_n$.  

By an innermost circle/outermost arc argument we may assume that $(\cup A_i) \cap A$ consists of circles essential on both $A$ and $\cup A_i$.  (See Figure \ref{fig: messy annulus}.) By cutting and pasting along the annulus cut off by an outermost circle on $A$, we may further assume that $(\cup A_i) \cap A = \emptyset$.  Since $A$ is of type II, the two arc components of $Q_n$ must be on the same component $W$ of $B - \Int N(A)$, which is homeomorphic to $D^2 \times I$ with $A$ identified to $\partial D^2 \times I$.  Since the arc components of $Q_n$ are unknotted in $B$, we see that the other component $X$ of $B - \Int N(A)$ is a solid torus, with $A$ a longitudinal annulus.


Recall that the set of annuli $A_i$ from the above argument do not
intersect $A$ and furthermore, $\bdy_0 A_i =c_i$ and $\bdy_1 A_i \cont
(\bdy B-Q_n)$.  One can view $\bigcup_i \bdy_1 A_i$ as a set of nested
circles with $\bdy A$ separating them into the two groups (see Figure
\ref{fig: nested circles}).

The two boundary components of $A$ break $\bdy B-Q_n$ into an annulus
and two (twice-punctured) disks.  After renumbering, we may assume
that $\bdy_1 A_i$ ($i=1,...,m$) are contained in the annulus part (as
in Figure \ref{fig: nested circles}).

Consequently, the other $(n-m)$ nested circles correspond to the
circle components in the two (twice-punctured) disk portions of $\bdy
B -Q_n$.  By adding a copy of $A$ and doing an isotopy, we may assume
these $(n-m)$ circles all lie on the bottom disk (as in Figure
\ref{fig: nested circles}).  Similarly, for the $m$ core circles
described above, we know that these circles are parallel to circles on the
boundary sphere $\bdy B - Q_n$, with the parallelisms given by the
$A_i$'s.  Using these parallelisms, we can perform isotopy to reorder
the $c_i$'s, $i=1,...,m$ and see that $A$ is the standard annulus for
$Q_m \cont Q_n$.
\end{proof}

\begin{figure}[hbt]
\centering
\includegraphics[width=3in]{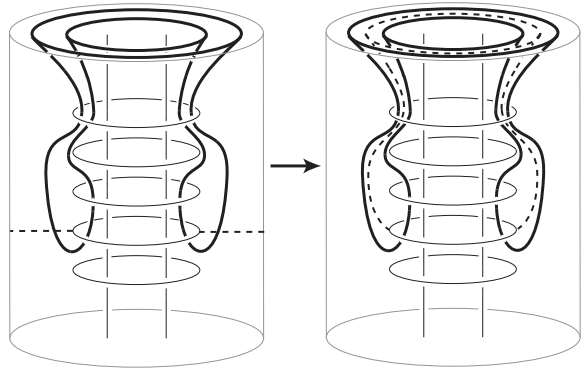}
\caption{This shows an example of how to change $A_4$ (dotted) so that
  its intersection with $A$ (bold) has less components as in the proof
  of Lemma \ref{lem: Qn annulus standard}.}
\label{fig: messy annulus}
\end{figure}

\begin{figure}[hbtp]
\centering
\includegraphics[width=3.5in]{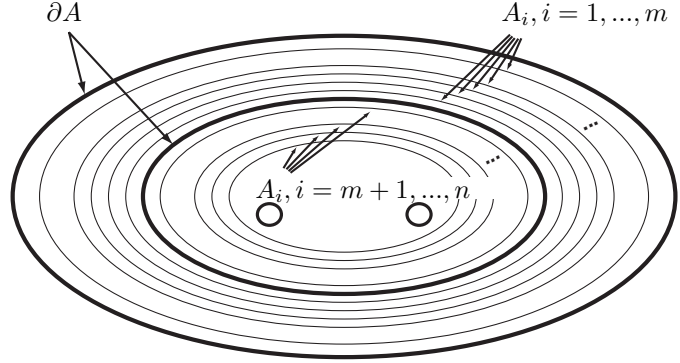}
\put(-239, 128){$\bdy A$}
\put(-67, 128){$A_i, i=1,...,m$}
\put(-160, 61){$A_i, i=m+1,...,n$}
\caption{A diagram showing nested circles as in the proof of Lemma \ref{lem: Qn annulus standard}.}
\label{fig: nested circles}
\end{figure}

\begin{lem}
Let $T$ be an arborescent tangle.  For any type II curve $\alpha \cont \bdy X(T)$, $T$ can be written as $T=Q_n \ast_{\Delta} T'$ such that $\bdy \Delta = \alpha$ and $T'$ is \bpann\ ($n \geq 0$).
\label{lem: can rewrite tangle as sum with Qn}
\end{lem}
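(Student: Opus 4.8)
The plan is to induct on the length $\ell(T)$, at each stage distinguishing whether $T$ is $\alpha$-annular or $\alpha$-anannular. First I would dispose of the anannular case, which covers both the base case and one branch of the inductive step. If $T$ is $\alpha$-anannular, let $\delta^+$ be the twice-punctured disk on $\bdy B$ bounded by $\alpha$ and push it slightly into $\Int B$ to obtain a properly embedded disk $\Delta$ with $\bdy \Delta = \alpha$ meeting $T$ in two points; the product region it cuts off is the trivial tangle $T[\infty]=Q_0$, so $T = Q_0 \ast_{\Delta} T'$ with $T' \cong T$. Since removing a product region induces a homeomorphism $X(T') \cong X(T)$ carrying $\bdy \Delta$ to $\alpha$, the tangle $T'$ is $\bdy\Delta$-anannular, and the conclusion holds with $n=0$. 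For the base case $\ell(T)=1$ (so $T$ is rational) I would check that $T$ is automatically $\alpha$-anannular: a rational tangle has a unique compressing disk $D$ for $\bdy B - T$ separating its two strings, so by Lemma \ref{lem: boundary reducing disk with empty intersection AND sphere} any essential annulus could be isotoped off $D$ and would then lie in one of the two solid-torus pieces $E(t_i)$ obtained by cutting along $D$; since a solid torus contains no essential annulus, no essential (necessarily type II, by Proposition \ref{prop: type I annulus}) annulus exists, and in particular none is parallel to $\alpha$.

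The heart of the argument is the $\alpha$-annular case of the inductive step. Here there is an essential annulus $A$ with $\bdy A \parallel \alpha$, which is of type II by Proposition \ref{prop: type I annulus}. I would cap the two boundary circles of $A$ off by the twice-punctured disks they bound on $\bdy B$ to form a Conway disk $\Delta$ with $\bdy \Delta = \alpha$, and consider the torus $A \cup R$ (with $R$ the unpunctured annulus of $\bdy B$ between the components of $\bdy A$) pushed into $\Int B$. This torus bounds a solid torus $V$; incompressibility of $A$ forces $V$ to meet $T$, and since $R$ carries no endpoints of $T$, every component of $V \cap T$ is a circle. I would then argue, using the standardness results for annuli in $Q_n$ (Lemmas \ref{lem: only ess annulus in Q1 is the std annulus} and \ref{lem: Qn annulus standard}) together with the fact that circle components of an arborescent tangle are unknotted and arise as the horizontal circles of $Q$-blocks, that these circles are parallel cores of $V$ and that $A$ is the standard annulus separating them from the two arcs. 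Consequently the side of $\Delta$ containing $V$ is $Q_k$ for some $k \geq 1$, and $T = Q_k \ast_{\Delta} T_1$ with $\bdy \Delta = \alpha$, where $T_1$ is the arborescent tangle on the other side; peeling off these $k$ circles strictly decreases the length, so $\ell(T_1) < \ell(T)$.

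To close the induction I would apply the inductive hypothesis to $(T_1, \alpha)$, viewing $\alpha = \bdy \Delta$ as a type II curve on $\bdy X(T_1)$, to get $T_1 = Q_m \ast_{\Delta'} T'$ with $\bdy \Delta' = \alpha$ and $T'$ being $\bdy\Delta'$-anannular. Using $Q_k \ast Q_m = Q_{k+m}$ (immediate from $Q_n = Q_1 \ast Q_{n-1}$), this yields $T = Q_k \ast Q_m \ast_{\Delta'} T' = Q_{k+m} \ast_{\Delta'} T'$ with $\bdy \Delta' = \alpha$ and $T'$ anannular, completing the induction with $n = k+m$.

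The step I expect to be the main obstacle is the structural analysis in the annular case: showing that the solid torus $V$ cut off by the essential annulus $A$ meets $T$ exactly in a standard family of parallel unknotted circle cores (so that the top side is literally $Q_k$ and the complementary tangle $T_1$ is again arborescent of smaller length), rather than in some knotted or linked configuration. This is where the hypothesis that $T$ is arborescent is essential, and where the earlier standardness lemmas for $Q_n$ must be leveraged; in particular, passing to an auxiliary arborescent decomposition $T = T_1 +_{\Delta_0} T_2$ and handling the possibility $A \cap P(\Delta_0) \neq \emptyset$ via Lemma \ref{lem: type II essential annulus/arcs} is the technical core of the verification.
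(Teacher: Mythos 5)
Your anannular branch, base case, and closing algebra ($Q_k \ast Q_m = Q_{k+m}$ via switching) all match the paper in spirit, but the heart of your annular case has a genuine gap --- and it sits exactly where you predicted. You claim that an essential type II annulus $A$ in a general arborescent tangle cuts off a solid torus $V$ meeting $T$ in parallel unknotted circle cores, so that the $A$-side is literally $Q_k$. Nothing available at this point in the paper delivers this. Lemmas \ref{lem: only ess annulus in Q1 is the std annulus} and \ref{lem: Qn annulus standard} apply only when the ambient tangle \emph{is} $Q_1$ or $Q_n$; the statement that an essential annulus in an arbitrary arborescent $T$ is standard in a $Q$-block is precisely Proposition \ref{prop: essential annulus is standard}, which the paper proves \emph{from} Lemma \ref{lem: can rewrite tangle as sum with Qn} (together with Lemmas \ref{lem: isotope bpann} and \ref{lem: Qn annulus standard}), so invoking it here is circular; and the assertion that circle components of an arborescent tangle ``arise as horizontal circles of $Q$-blocks'' is likewise a consequence of the results being proved, not a prior fact. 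There are secondary unproved steps in the same spot: that the torus $A \cup R$ bounds a solid torus on the circle side; that the complementary tangle $T_1$ cut off by your disk $\Delta$ is again arborescent (a Conway disk built from an annulus is not a priori a sum disk of an arborescent decomposition, so the inductive hypothesis may not apply to $T_1$ at all); and that $\ell(T_1) < \ell(T)$ (length is a minimum over \emph{all} decompositions, so peeling off $Q_k$ does not obviously decrease it).

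The paper's proof avoids this structural analysis entirely, and that is the move you are missing. It never identifies $A$ with a standard annulus at this stage. Instead it writes $T = T_1 +_{\Delta} T_2$ as a nontrivial sum with $\ell(T_i) \leq n$ (available by the definition of length, so the inductive hypothesis applies to both summands for free) and splits on whether $\bdy \Delta \parallel \alpha$. If not parallel, $A$ meets $P(\Delta)$ in essential arcs and Lemma \ref{lem: type II essential annulus/arcs} forces $T = T(1/2) +_{\Delta} T(1/2)$, i.e., $T = Q_1 \ast_{\Gamma} T'$ with $T'$ trivial and $\bdy \Gamma = \alpha$. If parallel, it applies the inductive hypothesis to \emph{each} $T_i$ along curves $\alpha_i \parallel \alpha$, obtaining $T_i = Q_{m_i} \ast_{\Delta_i} T_i'$ with $T_i'$ anannular, slides the two $Q$-blocks together by switching to get $T = Q_{m_1+m_2} \ast_{\Delta} (T_1' \ast T_2')$, and certifies that the remainder is \bpann\ by Lemma \ref{lem: isotope bpann}. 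Your proposal never uses Lemma \ref{lem: isotope bpann}, yet it is the only tool in the paper that guarantees anannularity of the remainder without ever locating the essential annuli; rerouting your induction through the tangle-sum decomposition and that lemma closes the proof without any solid-torus analysis.
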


\begin{proof}
If $T$ is rational then there is no essential annulus in $E(T),$ thus $T=Q_0 \ast T'$ where $T'=T$ is rational.  Suppose the result holds for any arborescent tangle $T$ having length $\ell(T)\leq n$ and proceed by induction.  Let $T$ be an arborescent tangle such that $\ell(T)=n+1,$ and let $\alpha \cont \bdy B - T$ be a type II curve.  If $T$ is $\alpha$-anannular, then the result holds as $T$ can be written as $T=Q_0 \ast_{\Delta} T'$ with $\bdy \Delta = \alpha$ and $T'=T$.  Suppose, then, that $T$ is $\alpha$-annular with $A$ an essential annulus, $\bdy A \parallel \alpha$.  Note that $A$ must be a type-II annulus by Proposition \ref{prop: type I annulus}.

Write \tsum\ where $\ell(T_i) \leq n$ for $i=1,2$.  If $\bdy \Delta \not \parallel \alpha,$ then $A$ intersects $\Delta$ in arcs.  By Lemma \ref{lem: type II essential annulus/arcs}, $T=T(1/2,1/2)=T(1/2)+_{\Delta}T(1/2)$.  Furthermore, we can write $T=Q_1 \ast_{\Gamma} T'$ where $\bdy \Gamma = \alpha$ and $T'$ is the trivial tangle.

If $\bdy \Delta \parallel \alpha,$ choose a curve $\alpha_i \parallel \alpha$.  By the inductive hypothesis, we may write $T_i = Q_{m_i} \ast_{\Delta_i} T_i',$ $i=1,2,$ where $\bdy \Delta_i=\alpha_i$ and $T_i'$ is $\Delta_i$-anannular.  Now:
\begin{eqnarray*}
T_1 +_{\Delta} T_2 &=& (Q_{m_1} \ast T_1') +_{\Delta} (Q_{m_2} \ast T_2') \mbox{ by definition} \\
&=& (T_1' \ast Q_{m_1}) +_{\Delta} (Q_{m_2} \ast T_2') \mbox{ by switching}
\end{eqnarray*}
However because of how we have chosen $\Delta, \Delta_1, \Delta_2,$ this is the same as $(T_1' \ast Q_{m_1}) \ast_{\Delta} (Q_{m_2} \ast T_2')$
\begin{eqnarray*}
&=& T_1' \ast (Q_{m_1} \ast Q_{m_2}) \ast T_2'  \\
&=& Q_{m_1+m_2} \ast_{\Delta} (T_1' \ast T_2') \mbox{ by switching} \\
&=& Q_{m_1+m_2} \ast_{\Delta} T'' \mbox{ where } T''=T_1' \ast T_2',
\end{eqnarray*}
and by Lemma \ref{lem: isotope bpann}, $T''$ is \bpann.
\end{proof}

\begin{prop}
\label{prop: essential annulus is standard}
Let $T$ be an arborescent tangle.  Then an annulus $A$ in $X(T)$ is essential if and only if
$T=Q_m \ast T'$ for some $m \geq 1$ and $A$ is a standard annulus in $Q_m$.
\end{prop}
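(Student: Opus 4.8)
The plan is to derive the proposition by assembling the structural results of this section, treating the two implications separately; the ``only if'' direction carries the real content, while the ``if'' direction is a direct essentiality check. For the reverse implication, suppose $T = Q_m \ast T'$ with $m \geq 1$ and $A$ is the standard annulus in $Q_m$. I would verify that $A$ is incompressible, $\bdy$-incompressible, and not $\bdy$-parallel. The core of $A$ is isotopic to one of the $m$ circle components $c_i$ of $Q_m$, and $c_i$ bounds a twice-punctured disk meeting the two strands, so it is not null-homotopic in $X(T)=B-T$; hence no essential curve of $A$ compresses and $A$ is incompressible. Since $Q_m \ast T'$ is a nontrivial sum (write it as $Q_1 \ast (Q_{m-1}\ast T')$), Proposition \ref{prop: P essential and bdy sphere incompressible} shows $\bdy X(T)$ is incompressible, so a $\bdy$-compression of $A$ would produce a compressing disk for $\bdy B - T$, which is impossible; combined with incompressibility and the next point this yields $\bdy$-incompressibility. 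Finally $A$ is not $\bdy$-parallel, since the component of $X(T)$ it cuts off on the circle side contains the $m \geq 1$ circles and so is not a product region. Thus $A$ is essential.

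For the forward implication, let $A$ be an essential annulus in $X(T)$. By Proposition \ref{prop: type I annulus} it is of type II, so $\bdy_1 A$ and $\bdy_2 A$ are essential (type II) curves on the $4$-punctured sphere $\bdy B - T$. Being disjoint, and since distinct essential curves on a $4$-punctured sphere have positive geometric intersection number, they are parallel; let $\alpha$ be their common isotopy class. Then $T$ is $\alpha$-annular, realized by $A$. Applying Lemma \ref{lem: can rewrite tangle as sum with Qn} to $\alpha$, write $T = Q_n \ast_{\Delta} T'$ with $\bdy \Delta = \alpha$ and $T'$ \bpann; since $T$ is $\alpha$-annular we must have $n \geq 1$, for otherwise $T = Q_0 \ast T' = T'$ would be \bpann.

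I would then push $A$ off the $T'$ side. Isotope $A$ to make $|A \cap P(\Delta)|$ minimal while keeping $\bdy A \cap \bdy \Delta = \emptyset$, so that $A \cap P(\Delta)$ is a collection of circles essential on both surfaces. By Lemma \ref{lem: three parts to show that essential torus is on one side or other for bpann}(2), every component of $A \cap X(T_i)$ is an essential type II annulus. Any component contained in $X(T')$ has all of its boundary parallel to $\bdy \Delta$: a boundary circle on $P(\Delta)$ is parallel to $\bdy \Delta$, and a boundary circle on $\bdy B - T$ is parallel to $\alpha = \bdy \Delta$. Such a component would make $T'$ $\partial\Delta$-annular, contradicting that $T'$ is \bpann. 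Hence $A \cap X(T') = \emptyset$ and $A \cont X(Q_n)$. Now $A$ is an essential type II annulus in $B - Q_n$, so Lemma \ref{lem: Qn annulus standard} gives an isotopy making $Q_n = Q_m \ast Q_{n-m}$ with $A$ the standard annulus in $Q_m$, and essentiality forces $m \geq 1$. Re-associating, $T = Q_n \ast T' = (Q_m \ast Q_{n-m}) \ast T' = Q_m \ast T''$ with $T'' = Q_{n-m} \ast T'$, so $T = Q_m \ast T''$ and $A$ is standard in $Q_m$, as claimed.

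The step I expect to be the main obstacle is pushing $A$ entirely onto the $Q_n$ side and certifying that it remains essential there. This requires correctly reading off the boundary slopes of the pieces $A \cap X(T_i)$ on the two different boundary $4$-punctured spheres of $X(T_1)$ and $X(T_2)$ so that the \bpann\ hypothesis can be invoked, and it is exactly where the distinction between surfaces in $X(T)$ and in $E(T)$, noted in Section \ref{section: Definitions and Preliminaries}, must be watched; in particular one must rule out a $\bdy$-compression of $A$ directed into $P(\Delta)$, which would otherwise let $A$ be isotoped across $P(\Delta)$ and either reduce $|A \cap P(\Delta)|$ or move $A$ to the $T'$ side, contradicting minimality or the anannularity of $T'$.
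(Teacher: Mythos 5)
Your proof is correct and follows essentially the same route as the paper: type II via Proposition \ref{prop: type I annulus}, the decomposition $T=Q_n \ast_{\Delta} T'$ with $T'$ \bpann\ via Lemma \ref{lem: can rewrite tangle as sum with Qn}, pushing $A$ into $X(Q_n)$, and then Lemma \ref{lem: Qn annulus standard} with re-association. The only cosmetic difference is that where the paper cites Lemma \ref{lem: isotope bpann} to move $A$ off the $T'$ side, you inline that lemma's argument (minimal $|A \cap P(\Delta)|$, Lemma \ref{lem: three parts to show that essential torus is on one side or other for bpann}(2), and the $\bdy\Delta$-anannularity of $T'$), and you spell out the essentiality check in the ``if'' direction that the paper calls clear.
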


\begin{proof}
A standard annulus of $Q_m$ is clearly an essential annulus in $Q_m * T'$, so assume that $A$ is an essential annulus in $X(T)$.  By Proposition \ref{prop: type I annulus}, $A$ is of type II.  By Lemma \ref{lem: can rewrite tangle as sum with Qn} we can write $T$ as $Q_n \ast_{\Delta} T''$, where $T''$ is \bpann, and $\bdy \Delta \parallel \bdy A$.  By Lemma \ref{lem: isotope bpann}, $A \cont X(T)$ can be isotoped into $Q_n$.  By Lemma \ref{lem: Qn annulus standard}, up to isotopy we have $Q_n=Q_m \ast Q_{n-m}$ and $A$ is standard in $Q_m$.  The result now follows by rewriting $T=Q_n \ast T''$ as $Q_m \ast T'$ with $T' = Q_{n-m} \ast T''$.
\end{proof}

\begin{lem}
If $F$ is an incompressible torus in $Q_n,$ $n \geq 1,$ then $Q_n$ is isotopic to $Q_m \ast Q_{n-m}$ such that $F$ is standard in $Q_m,$ $1 \leq m \leq n$.  In particular if $F$ is essential in $X(Q_n)$, then $F$ is standard in $Q_m,$ $2 \leq m \leq n$.
\label{lem: torus standard in Qn}
\end{lem}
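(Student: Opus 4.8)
The plan is to study an incompressible torus $F$ by cutting $X(Q_n)$ along the standard annulus $A$ of $Q_n$. By Proposition~\ref{prop: essential annulus is standard} (with $T'=Q_0$), $A$ is essential, so after isotoping $F$ to make $|F \cap A|$ minimal, the irreducibility of $X(Q_n)$ (Proposition~\ref{prop: arb tangle complements irred}) together with the incompressibility of $A$ and $F$ lets a standard innermost-circle argument arrange that every component of $F \cap A$ is a circle essential on both surfaces. The key preliminary step is to identify the two pieces $R,R'$ into which $A$ cuts $X(Q_n)$. I would check that the arc side $R$ is a product $\Sigma \times I$ over a twice-punctured disk (a pair of pants), hence a genus-two handlebody, and that the circle side $R'$ is the Seifert fibered space $(D^2 - n\ \mathrm{points}) \times S^1$, in which the $n$ deleted points correspond to the circles $c_i$ and the annulus $A$ appears as a \emph{vertical} annulus.

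The crux, and the step I expect to be the main obstacle, is showing that minimality forces $F \cap A = \emptyset$. If $F \cap A \neq \emptyset$, the essential intersection circles are parallel on the torus $F$ and cut it into annuli lying alternately in $R$ and $R'$; in particular at least one annulus piece $F_0$ lies in the handlebody $R$, with $\partial F_0$ a pair of core-parallel circles on $A$. Minimality of $|F\cap A|$ guarantees that $F_0$ is incompressible and $\partial$-incompressible rel $A$ (otherwise a compression would contradict incompressibility of $F$, and a $\partial$-compression along $A$ would reduce the intersection). The classification of incompressible, $\partial$-incompressible surfaces in the $I$-bundle $\Sigma \times I$ then shows $F_0$ is vertical or horizontal; but a vertical annulus has its boundary on $\Sigma \times \{0,1\}$ and a vertical surface over an arc is a disk, so no essential annulus can have both boundary circles on the vertical annulus $A$. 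Hence $F_0$ is $\partial$-parallel into $A$, and isotoping $F$ across an innermost such parallelism region reduces $|F \cap A|$, contradicting minimality. The delicate bookkeeping here is confirming that the cut pieces really are essential in $R$ and that the parallelism reduction is available.

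With $F \cap A = \emptyset$, $F$ lies entirely in $R$ or in $R'$. It cannot lie in $R$, whose fundamental group is free and contains no $\mathbb{Z}^2$, so $F \subset R' = (D^2 - n\ \mathrm{pts}) \times S^1$. By the classification of incompressible surfaces in Seifert fibered spaces, $F$ is isotopic to a vertical torus $\gamma_0 \times S^1$ with $\gamma_0$ an essential simple closed curve in $D^2 - n\ \mathrm{pts}$ encircling some $m$ of the punctures, $1 \le m \le n$; equivalently $F$ encloses exactly $m$ of the circles $c_i$. Using the parallelisms $A_i$ precisely as in the proof of Lemma~\ref{lem: Qn annulus standard}, I would reorder the $c_i$ by an isotopy of $Q_n$ so that the enclosed circles become $c_1,\dots,c_m$; then $Q_n = Q_m \ast Q_{n-m}$ and $F$ is exactly the standard torus in $Q_m$, giving the first statement with $1 \le m \le n$. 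Finally, if $F$ is essential then $m=1$ is excluded, since a curve $\gamma_0$ around a single $c_i$ yields a torus parallel to the cusp $\partial N(c_i)$ and hence $\partial$-parallel (consistent with Lemma~\ref{lem: only ess annulus in Q1 is the std annulus}, which shows $Q_1$ carries no essential torus); therefore $2 \le m \le n$.
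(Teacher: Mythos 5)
Your proposal is correct in substance, but it takes a genuinely different route from the paper's. The paper never cuts along the standard annulus $A$: it takes the disjoint parallelism annuli $A_i$ joining each circle $c_i$ to $\partial B$, chosen transverse to $F$ with $\sum_i |A_i \cap F|$ minimal, observes that $F$ must meet some $A_i$ (otherwise $F$ would lie in $E(T[\infty])$ and be compressible), and then performs a cut-and-paste: cutting the torus along an intersection circle that is outermost on $A_1$ and attaching two parallel copies of the outermost band $A_1' \subset A_1$ converts $F$ into a type II essential annulus $F'$ with boundary on $\partial B - Q_n$, to which Lemma \ref{lem: Qn annulus standard} applies directly; regluing the two copies of the cutting circle recovers $F$ as the standard torus in $Q_m$, with $m \geq 1$ (and $m \geq 2$ in the essential case) exactly as in your endgame. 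Your decomposition along $A$ into the pair-of-pants $I$-bundle $R$ and the Seifert-fibered piece $R' = (D^2 - n\ \mathrm{pts}) \times S^1$, followed by the vertical-torus classification in $R'$, is heavier machinery but arguably more transparent: it explains structurally \emph{why} every incompressible torus is vertical around some subset of the circles, whereas the paper's surgery trick is shorter and reuses Lemma \ref{lem: Qn annulus standard} as its only engine; both proofs converge in the final reordering of the $c_i$ via the parallelisms $A_i$.

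One step in your argument deserves tightening, as you yourself suspected: the $I$-bundle classification requires $\partial$-incompressibility with respect to \emph{all} of $\partial R$, not just ``rel $A$,'' and a $\partial$-compressing disk for an annulus piece $F_0 \subset R$ may have its boundary arc running over $\partial B - Q_n$, in which case it neither contradicts the essentiality of the closed surface $F$ nor immediately reduces $|F \cap A|$. The repair is standard and does not need the classification at all: a handlebody contains no incompressible, $\partial$-incompressible annulus, so $F_0$ is $\partial$-compressible, and $\partial$-compressing it yields a disk whose boundary is inessential, whence by irreducibility (exactly as in the paper's Lemma \ref{lem: three parts to show that essential torus is on one side or other for bpann}(1)) $F_0$ is boundary-parallel; since the two core-parallel circles of $\partial F_0$ cut the genus-two surface $\partial R$ into the sub-annulus of $A$ and a twice-holed torus, the only parallelism available for an annulus is into $A$, and an innermost such parallelism region reduces $|F \cap A|$ as you intended. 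With that patch your proof is complete; your final remark invoking Lemma \ref{lem: only ess annulus in Q1 is the std annulus} is unnecessary (that lemma concerns annuli), since the $m=1$ torus is inessential simply because it cuts off the cusp $N(c_i) - c_i$, as noted in Definition \ref{def: Q_n tangle, std torus, etc}.
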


\begin{proof}
Let $n \geq 1$ and suppose $F \cont X(Q_n)$ is an incompressible torus.  Define each of the core circle components of $Q_n$ by $c_1, c_2, .., c_n$.  Recall that for each $c_i,$ there exists and annulus $A_i$ such that $\bdy_0 A_i=c_i,$ $\bdy_1 A_i \cont (\bdy B - Q_n),$ where $(\bdy B - Q_n)$ is the boundary sphere for the tangle $Q_n$.  As in the proof of Lemma \ref{lem: Qn annulus standard}, choose the $A_i$ pairwise disjoint and transverse to $F$ with $\sum_i |A_i \cap F|$ minimal.

If $(\bigcup_i A_i) \cap F = \emptyset,$ then $F \cont E(Q_n - A_i)=E(T[\infty]),$ hence $F$ is compressible.  
This contradicts the hypothesis, therefore $(\bigcup_i A_i) \cap F \neq \emptyset$.  
Suppose $F \cap A_1 \neq \emptyset$.  The intersection may only consist of circles which are essential on both $F$ and $A_1$.  Let $\alpha$ be such a circle in the intersection which is ``outermost'' on the annulus with respect to the ball.  (In other words, it cuts off an annulus $A_1' \cont A_1$ such that $F \cap A_1'=\emptyset.)$  Cut the torus along this arc $\alpha$ to get an annulus $F'$ with two copies of $\alpha$ as its boundary components.  Glue each copy of $\alpha$ to one of two copies of $A_1'$ to make $F'$ an annulus with two parallel copies of $\bdy_1 A_1 \cont (\bdy B - Q_n)$ as its boundary.  Since $F$ is incompressible, the new annulus $F'$ must be essential and moreover, $F'$ is an essential type-II annulus.  Lemma \ref{lem: Qn annulus standard} tells us that up to isotopy, $F'$ is a standard annulus in $Q_m,$ $m \leq n$.  Gluing the two copies of $\alpha$ back together we recover $F,$ a standard torus in $Q_m,$ $1 \leq m \leq n$ (since $m=0$ implies that the torus is compressible).  Furthermore, if $F$ is essential in $X(Q_n)$, then $2 \leq m \leq n$.
\end{proof}

\begin{prop}
Let $T$ be an arborescent tangle.  Then a torus $F$ in $X(T)$ is essential if and only if $F$ contains $Q_m$ for some $m \geq 2$ and $F$ is a standard torus in $Q_m$. 
\label{prop: toroidal tangle contains Q2 with torus standard}
\end{prop}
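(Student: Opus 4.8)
The plan is to prove both directions, with the essential content in the forward (``only if'') implication, which I would establish by induction on the length $\ell(T)$. (Here I read the statement in the form recorded in the Addendum: $F$ is essential if and only if \emph{$T$} contains some $Q_m$ with $m\ge 2$ and $F$ is the standard torus of that $Q_m$.) For the easy (``if'') direction, suppose $T$ contains $Q_m$ with $m\ge 2$ and $F$ is its standard torus. Since $\bdy X(T)$ is a $4$-punctured sphere it has no torus component, so $F$ cannot be $\bdy$-parallel, and $\bdy$-incompressibility is vacuous for a closed surface; thus it suffices to check incompressibility. The standard torus is the double of the standard annulus $A$ of $Q_m$, which is essential by Proposition \ref{prop: essential annulus is standard}, and it encloses the $m\ge 2$ parallel removed circles $c_1,\dots,c_m$; a compression to the inside would have to separate these circles inside a solid torus, and a compression to the outside would compress $A$, so $F$ is incompressible. (For $m=1$ the torus instead cuts off a cusp, as noted in Definition \ref{def: Q_n tangle, std torus, etc}.) The base case of the induction, $\ell(T)=1$, is also immediate: then $T$ is rational and $E(T)$ is a handlebody (as used in Proposition \ref{prop: arb tangle complements irred}), so $\pi_1 X(T)$ is free and $X(T)$ contains no incompressible torus.

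For the inductive step I would assume the result for all arborescent tangles of smaller length, write $T=T_1+_{\Delta}T_2$ as a nontrivial sum, take $F$ an essential torus, and isotope $F$ so that $|F\cap P(\Delta)|$ is minimal; put $P=P(\Delta)$. If $F\cap P=\emptyset$, then $F\cont X(T_i)$ for some $i$; because $X(T_i)\cont X(T)$ has genus-$0$ boundary, $F$ stays incompressible and non-$\bdy$-parallel, hence essential, in $X(T_i)$, and the inductive hypothesis applies. If $F\cap P\neq\emptyset$, then by Lemma \ref{lem: three parts to show that essential torus is on one side or other for bpann}(2) the disk $P$ cuts $F$ into essential type~II annuli, each having both boundary circles on $P$ and parallel to $\bdy\Delta$. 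Since $P$ is separating and $F$ is connected, transverse, and meets $P$, there is such a piece in each of $X(T_1)$ and $X(T_2)$; hence \emph{both} $T_1$ and $T_2$ are $\bdy\Delta$-annular.

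To localize $F$, I would then apply Lemma \ref{lem: can rewrite tangle as sum with Qn} with $\alpha=\bdy\Delta$ to each side, obtaining $T_i=Q_{n_i}\ast T_i'$ with $T_i'$ \bpann\ and with $n_i\ge 1$, the last precisely because $T_i$ is $\bdy\Delta$-annular (if $n_i=0$ then $T_i=T_i'$ would be anannular). Running the switching-and-reassociating computation from the proof of Lemma \ref{lem: can rewrite tangle as sum with Qn}, these combine to
\[
T \;=\; Q_{n_1+n_2}\ast_{\Delta} T'',\qquad T''=T_1'\ast T_2',
\]
where $T''$ is \bpann\ and $n:=n_1+n_2\ge 2$. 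Because $T''$ is \bpann, Lemma \ref{lem: three parts to show that essential torus is on one side or other for bpann}(3) applies and shows that, up to isotopy, $F\cont E(Q_n)$ or $F\cont E(T'')$. In the first case Lemma \ref{lem: torus standard in Qn} yields $Q_n\cong Q_m\ast Q_{n-m}$ with $F$ standard in $Q_m$ for some $2\le m\le n$, so $T$ contains this $Q_m$ and $F$ is its standard torus. In the second case $\ell(T'')<\ell(T)$ (as $n\ge 2$), $F$ remains essential in $X(T'')$, and the inductive hypothesis completes the argument.

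I expect the main obstacle to be the case $F\cap P\neq\emptyset$. The crux is to guarantee that cutting along $P$ produces genuinely \emph{essential} type~II annuli on both sides, so that the rewriting lemma supplies $n_i\ge 1$ and the induction strictly decreases the length; inessential pieces would instead be removable, contradicting minimality of $|F\cap P|$, and this is exactly what Lemma \ref{lem: three parts to show that essential torus is on one side or other for bpann}(2) rules out. The supporting bookkeeping—verifying at each step that $F$ stays incompressible and non-$\bdy$-parallel when restricted to a sub-tangle complement (using that all these boundaries have genus $0$)—is routine but is what keeps the induction honest.
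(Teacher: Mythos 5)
Your proposal is correct and takes essentially the same route as the paper: induction on $\ell(T)$, using Lemma \ref{lem: can rewrite tangle as sum with Qn} to put $T$ in the form $Q_n \ast T''$ with $T''$ \bpann, then Lemma \ref{lem: three parts to show that essential torus is on one side or other for bpann}(3) to push $F$ either into $Q_n$ (settled by Lemma \ref{lem: torus standard in Qn}) or into a shorter tangle (settled by induction); your only deviations are organizational, since you split on whether $F \cap P(\Delta) = \emptyset$ rather than on whether $T = Q_s \ast T'$ with $s > 0$, and both versions rely tacitly on the same length-monotonicity convention ($\ell(T'') < \ell(T)$, asserted but not proved in the paper either). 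One small repair for the easy direction: rather than claiming ``a compression to the outside would compress $A$,'' argue as the paper does that compressing $F$ on the side away from the circles would produce a reducing sphere of $B - T$, contradicting Proposition \ref{prop: arb tangle complements irred}.
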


\begin{proof}
First assume that $T$ contains $Q_m$ with $m \geq 2$ and $F$ is a standard torus in $Q_m$.  Then the solid torus $V$ in $B$ cut off by $F$ contains $m \geq 2$ circle components of $T$, which are the cores of $V$; hence $V-T$ is not a cusp and $F$ is incompressible in $V-T$.  If $F$ is compressible in $B-\Int V$, then after compression $F$ would become a reducing sphere of $B-T$, contradicting Proposition \ref{prop: arb tangle complements irred}.  Therefore, $F$ is an essential torus in $X(T)$.  

We now assume that $F$ is an essential torus in $X(T)$ and proceed by induction on the length of the tangle $\ell(T))$.  Suppose $T$ is a tangle having length $\ell (T)=1$.  $T$ is rational and hence atoroidal so the result is vacuously true.  Suppose the result holds for a tangle $T$ having length $\ell(T) \leq k$.

Let $T$ be a tangle of length $k+1$.  By Lemma \ref{lem: can rewrite tangle as sum with Qn}, $T$ can be written as $T=Q_s \ast_{\Delta} T'$ with $T'$ \bpann\ and $P=P(\Delta)$.  If $s > 0,$ then Lemmas \ref{lem: three parts to show that essential torus is on one side or other for bpann} Part (3) and \ref{lem: torus standard in Qn} imply that $F$ is standard in $Q_n$ or $F \cont X(T')$.  Furthermore, $\ell(T') \leq k$ so in the latter case, the inductive hypothesis gives the result.  

Suppose $s=0$ and moreover that $T$ cannot be written in such a sum with $s > 0$. 
Write $T=T_1 +_{\Delta'} T_2$ with $\ell(T_i) \leq k$ for $i=1,2$.  Let $\alpha=\bdy \Delta'$ and apply Lemma \ref{lem: can rewrite tangle as sum with Qn}.  The tangle $T_2$ can be written as $Q_m \ast_{\Delta'} T_2'$ where $T_2$ is $\bdy \Delta'$-anannular.  However, if $m > 0,$ then by switching, $T$ can be written in the form $T=Q_s \ast_{\Delta} T'$ with $T'$ \bpann\ and $s >0$ (by letting $s=m$).  Since we assumed that was not the case, $m=0$.  Thus, $T_2$ is $\alpha$-anannular.  Now by Lemma \ref{lem: three parts to show that essential torus is on one side or other for bpann} Part (3), $F \cont X(T_i)$ for $i=1$ or $2$.  The result follows by induction.    
\end{proof}

We have now determined all arborescent tangles whose complement
contains an essential surface which is an $S^2$, $D^2$, annulus or
torus.  These are summarized in the following theorem to determine all
arborescent tangles whose complements are non-hyperbolic.

\begin{thm}
Suppose $T$ is an arborescent tangle.  Then $X(T)$ is non-hyperbolic if
and only if one of the following holds.

(1) $T$ is a rational tangle.

(2) $T = Q_m * T'$ for some $m\geq 1$.

(3) $T$ contains $Q_n$ for some $n\geq 2$.
\label{thm: hyperbolic tangle complements}
\end{thm}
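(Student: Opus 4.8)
The plan is to combine the propositions of this section through the characterization of hyperbolicity given in the definition preceding the theorem. By that definition, $X(T)$ is hyperbolic exactly when it is irreducible, $\bdy$-irreducible, atoroidal, and anannular, so $X(T)$ is non-hyperbolic precisely when it contains an essential surface that is a sphere, a disk, an annulus, or a torus. I would therefore treat each of these four surface types in turn and then assemble the equivalences into the stated disjunction, handling both implications at once since each cited proposition is an ``if and only if.''

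First I would rule out the sphere: by Proposition \ref{prop: arb tangle complements irred}, $X(T)$ is always irreducible, so it contains no essential sphere and this case contributes nothing to the list. Next, for the disk, I would use that an arborescent tangle is by definition either a rational tangle or a nontrivial sum $T_1 +_{\Delta} T_2$. By Proposition \ref{prop: P essential and bdy sphere incompressible}, $\bdy X(T)$ is incompressible whenever $T$ is a nontrivial sum; hence $X(T)$ contains an essential (compressing) disk if and only if $T$ is rational, in which case the disk separating the two strings is such a disk. This is exactly condition (1).

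The annulus and torus cases are then read off directly from the two main propositions already proved. Proposition \ref{prop: essential annulus is standard} gives that $X(T)$ contains an essential annulus if and only if $T = Q_m \ast T'$ for some $m \geq 1$ (condition (2)), and Proposition \ref{prop: toroidal tangle contains Q2 with torus standard} gives that $X(T)$ contains an essential torus if and only if $T$ contains $Q_n$ for some $n \geq 2$ (condition (3)). Putting the four cases together shows that $X(T)$ fails one of the four hyperbolicity conditions precisely when one of (1), (2), (3) holds.

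Since the heavy lifting has all been done in the propositions, there is no deep obstacle remaining; the only care needed is bookkeeping. For the backward direction I must confirm that each condition genuinely produces an essential surface---a compressing disk in the rational case, a standard annulus in $Q_m$ when $T = Q_m \ast T'$, and a standard torus inside $Q_n$ with $n \geq 2$---each of which is supplied by the relevant proposition. For the forward direction I rely only on the stated equivalences together with the exhaustiveness of the list sphere/disk/annulus/torus as the possible essential surfaces detecting non-hyperbolicity.
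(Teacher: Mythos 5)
Your proposal is correct and takes essentially the same approach as the paper's own proof, which likewise reduces non-hyperbolicity (via the definition) to the existence of an essential sphere, disk, annulus, or torus and then cites Propositions \ref{prop: arb tangle complements irred}, \ref{prop: P essential and bdy sphere incompressible}, \ref{prop: essential annulus is standard}, and \ref{prop: toroidal tangle contains Q2 with torus standard}, respectively. Your added bookkeeping in the disk case (splitting $T$ into rational versus nontrivial sum and invoking incompressibility of $\bdy X(T)$) merely makes explicit what the paper leaves implicit.
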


\begin{proof}
By definition, $X(T)$ is non-hyperbolic if and only if it contains an essential surface $F$ which is a disk, sphere, annulus or torus.  These are determined by Propositions \ref{prop: P essential and bdy sphere incompressible}, \ref{prop: arb tangle complements irred}, \ref {prop: essential annulus is standard}, and \ref {prop: toroidal tangle contains Q2 with torus standard}, respectively.
\end{proof}

\begin{addend} 
Suppose $T$ is an arborescent tangle.

(1) $X(T)$ contains no essential $S^2$.

(2) $X(T)$ contains an essential disk $D$ if and only if $T$ is a rational tangle and $D$ is the disk separating the two strings of $T$.

(3) $X(T)$ contains an essential annulus $A$ if and only if $T = Q_m * T'$ for some $m\geq 1$ and $A$ is a standard annulus in $Q_m$.

(4) If $X(T)$ contains an essential torus $F$ then $T$ contains a $Q_m$ for some $m\geq 2$ and $F$ is a standard torus in $Q_m$.
\label{addendum from YQW}
\end{addend}

\begin{proof}
As above, this follows from Propositions \ref{prop: P essential and bdy sphere incompressible}, \ref{prop: arb tangle complements irred}, \ref {prop: essential annulus is standard}, and \ref {prop: toroidal tangle contains Q2 with torus standard}.
\end{proof}

\subsection{Spheres intersecting $T$ at two points}

As an application of the results in the previous sections, we will
show that any sphere intersecting an arborescent tangle transversely
at two points must be trivial in the sense that it bounds a 3-ball
intersecting the tangle at a single trivial arc.  The following lemma
holds for any $n$-string tangle in a 3-ball.

\begin{lem}
\label{lem: sphere intersecting T twice}
Let $(B,T)$ be a tangle, let $S$ be a sphere in $B$, and let $B'$ be the 3-ball in $B$ bounded by $S$.  Suppose $|S \cap T| = 2$ and the component $t$ of $T$ intersecting $S$ is a circle.  If $B' \cap T$ is not a single unknotted string in $B'$, then either $X(T)$ is reducible, or it is toroidal and the boundary of $B' \cup N(t)$ is an essential torus in $X(T)$.
\end{lem}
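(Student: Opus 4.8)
The plan is to set $V = B' \cup N(t)$, which is a solid torus whose core is homologous to the circle $t$, and to study its boundary torus $F = \bdy(B' \cup N(t))$; this $F$ separates $X(T)$ into an inner region $V - T$ and an outer region $W - T$, where $W = \overline{B - V}$. Here $T \cap V$ consists of $t$ together with whatever closed components of $T$ lie in $B'$, while the strings of $T$, having their endpoints on $\bdy B$ and missing $S$, all lie in $W$. The first thing I would record is a reinterpretation of the hypothesis: the inner region $V - T$ is a product $T^2 \times I$ (equivalently, $F$ is parallel in $V-T$ to the cusp torus $\bdy N(t)$) precisely when $B' \cap T$ is a single unknotted string; so the hypothesis is exactly the statement that $V-T$ is not such a product. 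I would also dispose of inside-compressibility once and for all: any essential curve on $F=\bdy V$ that bounds a disk in $V$ is a meridian, and every meridian disk meets $t$ since $t$ generates $H_1(V)$; hence no compressing disk for $F$ lies in $V - T$. This step is uniform and uses no hypothesis.

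The argument then splits on whether $F$ compresses from the outside. First suppose there is a compressing disk $D \cont W - T$ with $\bdy D$ essential on $F$. I would compress $F$ along $D$ to obtain a $2$-sphere $S^* \cont X(T)$ and argue it is a reducing sphere. Since the compression is supported near $F \cup D$ on the $W$ side, $V$ stays on the inner side of $S^*$; by Alexander's theorem $S^*$ bounds a ball $Y_1$ on the side not meeting $\bdy B$, and $V \cont Y_1$, so this ball contains the closed component $t$. The strings of $T$ lie in the complementary region $Y_2 \ni \bdy B$. Thus both sides of $S^*$ meet $T$, so $S^*$ bounds no ball in $X(T) = B - T$, and $X(T)$ is reducible, giving the first alternative.

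In the remaining case $F$ is incompressible from the outside as well, hence incompressible in $X(T)$, and it remains to see that $F$ is essential, i.e.\ not $\bdy$-parallel. The outer region $W - T$ cannot be a product $T^2 \times I$ because its boundary contains the four-punctured sphere $\bdy B - T$, which is not a torus; and by the reinterpretation above the inner region $V - T$ is a product exactly when $B' \cap T$ is a single unknotted string, which the hypothesis forbids. Since $F$ is parallel to no boundary component of $X(T)$, it is essential, and $X(T)$ is toroidal with essential torus $\bdy(B' \cup N(t))$.

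I expect the main obstacle to be the last step, and specifically the reinterpretation of the hypothesis that feeds it: the only way $F$ can fail to be essential (given incompressibility) is by being parallel from the inside to the cusp torus at $t$, and I must show that this product structure on $V - T$ is equivalent to $B' \cap T$ being a single unknotted arc. This requires controlling both possibilities allowed by the negation of the hypothesis — the presence of extra closed components inside $B'$ (which adds torus ends to $\bdy(V-T)$, precluding a product) and the knotting of the arc $t \cap B'$ (which prevents $t$ from being a core of $V$, so that $V - N(t) \not\cong T^2 \times I$). The incompressibility and reducing-sphere arguments are comparatively routine; the care is in this homeomorphism-type analysis of $V - T$, which is the precise point where the hypothesis is used.
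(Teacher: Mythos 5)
Your proposal is correct and takes essentially the same route as the paper: the paper also forms the solid torus $V=B'\cup N(t'')$, shows $F=\bdy V$ is incompressible from inside because $t$ is homologically nontrivial in $V$ (so every meridian disk meets $t$), turns an outside compression into a reducing sphere, and in the incompressible case notes $F$ can fail to be essential only by cutting off a cusp on the $V$ side, which forces $V\cap T$ to be the core of $V$ and hence $B'\cap T$ to be a single unknotted arc, contrary to hypothesis. The equivalence you flag as the main obstacle is precisely the paper's one-line assertion at that final step, so your treatment is, if anything, slightly more detailed than the original.
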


\begin{proof}
Let $t' = t \cap B'$, and $t'' = t - \Int(t')$.  The union of $B'$ and a regular neighborhood of $t''$ is a solid torus $V$ in $B$. Since a meridian disk of $V$ intersects $t$ at a single point, $t$ is homologically nontrivial in $V$, hence the torus $F = \bdy V$ is incompressible in $V - T$.  If $F$ is compressible in the outside of $V$ then a compression will produce a reducing sphere of $B-T$, hence $X(T)$ is reducible.

Now assume $F$ is incompressible in $B-T$.  Since $\bdy X(T)$ is a punctured sphere, $F$ is not boundary parallel.  Hence either $F$ is an essential torus and we are done, or it bounds a cusp, which means that $V \cap T$ is the core of $V$, so $B'\cap T$ is a single unknotted arc in $B'$, contradicting the assumption.
\end{proof}

\begin{cor}
\label{cor: sphere in B}
If $T$ is an arborescent tangle and $S$ is a sphere in $B$ intersecting $T$ at two points, then the 3-ball $B'$ in $B$ bounded by $S$ intersects $T$ at a single unknotted string.
\end{cor}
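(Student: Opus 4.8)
The plan is to reduce, by a parity count, to a single component of $T$ crossing $S$, then dispose of the circle case using Lemma \ref{lem: sphere intersecting T twice} and treat the arc case by a parallel construction. Since $S\cont\Int B$, every arc of $T$ has both endpoints on $\bdy B$ outside $B'$ and so crosses $S$ an even number of times, and every circle component of $T$, being closed, meets the separating sphere $S$ an even number of times as well. As $|S\cap T|=2$, exactly one component $t$ of $T$ crosses $S$, and it does so in precisely two points; any further component of $T$ meeting $B'$ is a circle lying entirely in $\Int B'$. Thus $B'\cap T$ consists of one string (a subarc of $t$) together with possibly some closed curves, and I must show that no closed curves occur and that the string is unknotted.

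Suppose first that $t$ is a circle. Then Lemma \ref{lem: sphere intersecting T twice} applies directly: if $B'\cap T$ is not a single unknotted string, then either $X(T)$ is reducible, which is impossible by Proposition \ref{prop: arb tangle complements irred}, or $F=\bdy(B'\cup N(t))$ is an essential torus. To exclude the second alternative I would invoke the torus classification. By Proposition \ref{prop: toroidal tangle contains Q2 with torus standard} every essential torus is a standard torus in some $Q_m$ with $m\geq 2$, and it bounds a solid torus whose $m\geq 2$ parallel circle cores are components of $T$, so a meridian disk of that solid torus meets $T$ in at least $m\geq 2$ points. On the other hand $V=B'\cup N(t)$ is a solid torus (a ball with a one-handle) whose meridian disk, taken as a cocore of $N(t'')$, meets $T$ in the single point $t\cap N(t'')$; and $V$ is the only solid-torus side of $F$, since the complementary region contains $\bdy B$ in its boundary and hence is not a solid torus. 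Therefore $F$ cannot be a standard torus, a contradiction, and $B'\cap T$ is a single unknotted arc (in particular there are no interior closed curves).

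The arc case is the main obstacle, since Lemma \ref{lem: sphere intersecting T twice} is stated only for circles; I would establish its analogue by the same method. Writing $\tau=B'\cap t$ for the subarc and drilling it out, set $W_1=B'-\Int N(\tau)$, whose frontier $F$ in $B$ is a torus contained in $X(T)$. If $\tau$ is knotted or $B'$ contains interior circles, then $F$ is incompressible in $W_1$, and a comparison with the standard torus of $Q_m$ again rules out the essential case: a standard torus has its $m\geq 2$ circle cores threaded by the \emph{two} arc strands of $Q_m$ on its non--solid-torus side, and were $F$ such a torus those two strands would have to cross $S$ to link the circles sitting inside $B'$, contradicting $|S\cap T|=2$; equivalently, neither side of $F$ is a solid torus whose meridian disk meets $T$ at least twice. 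If instead $F$ is compressible in $X(T)$, the compressing disk lies on the side away from $W_1$, and compressing produces, by irreducibility (Proposition \ref{prop: arb tangle complements irred}), a sphere bounding a ball, which exhibits $\tau$ as $\bdy$-parallel and excludes interior circles, a contradiction with the hypothesis. In every case we are forced to $B'\cap T$ being a single unknotted string. The delicate point to nail down is precisely this torus analysis in the arc case together with the uniform exclusion of interior closed curves, both of which I expect to hinge on the crossing-count/meridian-disk comparison with the standard torus of $Q_m$ combined with irreducibility.
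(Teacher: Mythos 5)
Your reduction to a single component $t$ crossing $S$ is fine, and your circle case is essentially the paper's: it invokes Lemma \ref{lem: sphere intersecting T twice}, kills reducibility with Proposition \ref{prop: arb tangle complements irred}, and kills the essential-torus alternative by comparing the meridian of $V=B'\cup N(t'')$ (meeting $T$ once) with the meridian of the solid torus bounded by a standard torus in $Q_m$ (meeting $T$ at least $m\geq 2$ times). That part is sound.

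The gap is in the arc case, and it sits exactly where you flagged uncertainty. For the drilled torus $F=\bdy\bigl(B'-\Int N(\tau)\bigr)$ you argue only two branches: $F$ essential (excluded via the standard-torus comparison) or $F$ compressible (excluded via irreducibility). But there is a third possibility, which is the one the paper's own Lemma \ref{lem: sphere intersecting T twice} makes explicit in the circle setting: $F$ may be incompressible yet inessential because it cuts off a \emph{cusp}. Concretely, take $\tau$ unknotted and $B'\cap T=\tau\cup c$, where $c$ is a single circle component of $T$ forming the core of the solid torus $W_1=B'-\Int N(\tau)$, i.e.\ $c$ is a meridian circle encircling $\tau$ --- the earring configuration. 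There $F$ is incompressible on the $W_1$ side (every meridian disk of $W_1$ meets the core $c$), it is not essential (it bounds the cusp $W_1-c\cong T^2\times[0,1)$), and neither of your two branches fires, so your proof derives no contradiction in precisely the configuration the corollary must exclude. In the circle case the cusp alternative \emph{is} the desired conclusion, but in the arc case it is the bad configuration, so the ``parallel construction'' is not actually parallel. You also cannot patch this by citing Corollary \ref{cor: no earrings}, since that corollary is deduced from the present one; the exclusion of earrings is the real content here. The paper closes this hole with a different construction: for $t$ an arc it takes the frontier of $B'\cup N(t'')$, which is a type I-B \emph{annulus}, and applies Proposition \ref{prop: type I annulus} (whose induction on tangle length carries the substance): an incompressible type I-B annulus must be $\bdy N(t_i)$, forcing $B'\cap T=t'$ unknotted with no interior circles, while compressing such an annulus would produce a sphere meeting $T$ in one point, impossible by parity. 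To repair your argument you would need an input of comparable inductive strength showing no circle component of an arborescent tangle bounds a once-punctured disk; the torus classification alone does not see this case.
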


\begin{proof} Let $t'$ be the arc component of $T \cap B'$, let $t$ be
  the component of $T$ containing $t'$, and let $t'' = t - \Int(t')$.

  If $t$ is a circle component of $T$ then by Lemma 
  \ref{lem: sphere intersecting T twice}, either $X(T)$ is
  reducible, which contradicts Proposition \ref{prop: arb tangle complements irred}, 
  or the torus $F = \bdy (B' \cup N(t)$ is an essential torus in $X(T)$.  
  By Proposition \ref{prop: toroidal tangle contains Q2 with torus standard},
  $F$ must be the standard torus in $Q_m$ for some $m\geq 2$, which
  contradicts the fact that the solid torus bounded by $F$ has a
  meridian intersecting $T$ at a single point.

  We now assume that $t$ is an arc component of $T$.  Then the
  frontier of $B' \cup N(t'')$ is a type I-B annulus $F$ in $B$.  By
  Proposition \ref{prop: type I annulus}, $F$ cuts off a cusp in
  $X(T)$, which implies that $B' \cap T = t'$ and $t'$ is an
  unknotted string in $B'$.
\end{proof}

  Consider a tangle $T$ with a single closed component $t'$ that
  bounds a disk $D \cont B$ such that $\Int(D)$ intersects $T$
  transversely in a single point.  We call $t'$ an \emph{earring} of
  the tangle $T$.  

\begin{cor}
Arborescent tangles cannot have earrings.
\label{cor: no earrings}
\end{cor}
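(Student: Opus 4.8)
The plan is to reduce the statement to Corollary \ref{cor: sphere in B} by thickening the earring disk into a $3$-ball whose boundary sphere meets $T$ in exactly two points. Suppose for contradiction that the arborescent tangle $(B,T)$ has an earring $t'$; by definition $t'$ is a closed component of $T$ bounding a disk $D \subset B$ whose interior meets $T$ transversely in a single point $p$. Since $t' = \partial D$ is an embedded circle lying in $\Int B$, the point $p$ lies on a component of $T$ distinct from $t'$. After an isotopy rel $\partial D$ I may assume $D \subset \Int B$.

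Next I would form the $3$-ball $B' = N(D)$, a regular neighborhood of $D$, and set $S = \partial B'$, a sphere in $\Int B$. Taking $N(D)$ small enough, the only parts of $T$ meeting $B'$ are the entire circle $t'$, which sits in $\Int B'$ because $\partial D = t' \subset \Int N(D)$, together with a single short subarc of the string through $p$; this subarc crosses $S$ transversely in exactly the two points where that string enters and leaves $B'$. Hence $|S \cap T| = 2$, and $B'$ is the $3$-ball bounded by $S$ in $B$ (the bounded side of $S$, the other side being $B \setminus \Int B'$, which contains $\partial B$).

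Now I would invoke Corollary \ref{cor: sphere in B}: since $T$ is arborescent and $S$ meets $T$ in two points, $B'$ must intersect $T$ in a single unknotted string, i.e.\ a single arc. But $B' = N(D)$ contains the whole closed component $t'$ in its interior, so $B' \cap T$ has a circle component and therefore cannot be a single arc. This contradiction shows that $T$ has no earring. The main point requiring care is the intersection count: one must choose $N(D)$ small enough that $t'$ is swallowed into $\Int B'$ and contributes nothing to $S \cap T$, while the single transverse intersection at $p$ contributes exactly two points, and then correctly identify $B' = N(D)$ as the ball side of $S$ so that Corollary \ref{cor: sphere in B} may be applied.
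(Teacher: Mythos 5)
Your proposal is correct and is essentially the paper's own proof: the paper likewise takes a regular neighborhood of the earring disk $D$, observes its boundary sphere meets $T$ in two points while the ball contains the closed component $t'$, and derives the contradiction from Corollary \ref{cor: sphere in B}. Your version simply spells out the transversality and intersection-count details that the paper leaves implicit.
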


\begin{proof}
If this is not the case then a regular neighborhood of the disk $D$ described above is a ball whose intersection with $T$ is not a trivial arc since it contains a closed component.  This contradicts Corollary \ref{cor: sphere in B}.
\end{proof}

\section{Hyperbolicity of arborescent link complements}
\label{section: Arborescent links}

As mentioned in the introduction one great accomplishment of the work of Bonahon and Seibenmann \cite{BS} is that they classified all non-hyperbolic arborescent links.  However, their work has remained incomplete and unpublished.  Oertel \cite{Oertel} classified non-hyperbolic Montesinos links.  See Theorem \ref{thm: Oertel's Montesinos hyperbolic links} below.  The main theorem of this section is Theorem \ref{thm: Bonahon-Seibenmann large arborescent links}, which classifies all non-hyperbolic arborescent links of length at least 4.  Together with Oertel's theorem, this gives an alternative proof of Bonahon-Seibenmann's classification theorem.  Another proof of Bonahon-Seibenmann's classification theorem has been obtained recently by Futer and Gueritaud \cite{FuterGueritaud}, using a completely different approach.  

An \emph{arborescent link} $L$ was defined in Section \ref{section: Definitions and Preliminaries} as constructed from an arborescent tangle $(B,T)$ by adding two arcs on $\bdy B$ to connect the boundary points of $T$.  We proceed by recalling the precise definition from Section \ref{section: Definitions and Preliminaries}.  

\bigskip

\noindent \textbf{Definition \ref{defin: arborescent links in terms of tangles}} {\rm (Wu \cite{Wu2})} \textit{A \emph{small arborescent link} is a rational link or a Montesinos link of length $2$ or $3$; these are simply rational tangles connected along a band.  A \emph{large arborescent link} is obtained by gluing two nonrational arborescent tangles, $T_1$ and $T_2$, by an identification map of their boundary spheres (Conway spheres).  A link is an arborescent link if it is either a small arborescent link or a large arborescent link.}

\bigskip
In other words, if an arborescent tangle of length $2$ or $3$ is a Montesinos tangle, we may turn the tangle into an arborescent link by simply connecting the top two strings and the bottom two strings.  (See Figure \ref{fig: Montesinos Link}.)   To be precise, however, note that an arborescent tangle of length $3$ is not necessarily a Montesinos tangle. For example, the tangle in Figure \ref{fig: length} is not a Montesinos tangle, but after closing it appropriately one gets a Montesinos link.   Montesinos links have been studied in detail by Oertel \cite{Oertel}, who called them star links since the tree diagrams (as in Gabai \cite{Gabai}) are star-shaped.  To denote a Montesinos link as in Figure \ref{fig: Montesinos Link}, we write $K(r_1, r_2,...,r_n),$ where $r_i=p_i/q_i$.

The following theorem of Oertel \cite[Corollary 5]{Oertel} determines all
non-hyperbolic Montesinos links.

\begin{thm} {\rm (Oertel)}  Suppose $K$ is a Montesinos link.  $S^3-K$ has complete hyperbolic structure if $K$ is not a torus link, and it is not equivalent to $L(\frac 12, \frac 12, \frac{-1}2, \frac{-1}2)$,  $L(\frac 23, \frac{-1}3, \frac{-1}3)$, $L(\frac 12, \frac{-1}4, \frac{-1}4)$, $L(\frac 12, \frac{-1}3, \frac{-1}6)$ or the mirror image of these links.
\label{thm: Oertel's Montesinos hyperbolic links}
\end{thm}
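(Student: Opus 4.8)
The plan is to deduce the statement from Thurston's hyperbolization theorem for Haken manifolds, combined with a classification of the essential surfaces in a Montesinos (star) link exterior in the spirit of Oertel's branched-surface analysis. Writing $E(K) = S^3 - \Int N(K)$, the exterior is a compact $3$-manifold with toroidal boundary, so by Thurston \cite{Thurston} it carries a complete hyperbolic structure precisely when it is irreducible, $\bdy$-irreducible, atoroidal, and not Seifert fibered. First I would therefore split the theorem into three independent questions: when is $E(K)$ reducible or $\bdy$-reducible, when is it Seifert fibered, and when does it contain an essential torus.

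Next I would dispose of the reducibility, boundary, and Seifert-fibered cases, which are the easy ones. Because each $r_i = p_i/q_i$ has $q_i > 1$, the link $K(r_1,\dots,r_n)$ is non-split and prime, so $E(K)$ is irreducible and $\bdy$-irreducible as soon as $K$ is not the unknot. Among prime link exteriors in $S^3$ the Seifert-fibered ones are exactly the torus-link exteriors, so excluding torus links removes precisely these. A convenient way to organize all of this, and the torus analysis below, is through the base $2$-orbifold of the Seifert structure carried by $(S^3,K)$: this is a sphere with $n$ cone points of orders determined by the $q_i$, and hyperbolicity of $E(K)$ corresponds to this base orbifold having negative orbifold Euler characteristic, i.e. $\sum_i(1 - 1/q_i) > 2$.

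The hard part will be the torus analysis: proving that the only non-torus Montesinos links whose exteriors contain an essential torus are the four listed exceptions. Here I would follow Oertel \cite{Oertel} and put an essential torus $F$ into normal position with respect to the branched surface carrying the incompressible surfaces of the star-link complement, then control how $F$ meets the decomposing Conway spheres separating the rational-tangle summands. The combinatorics should force the base $2$-orbifold to be either spherical (giving a torus link) or one of the four Euclidean signatures $(2,2,2,2)$, $(2,3,6)$, $(2,4,4)$, $(3,3,3)$, with every larger, genuinely hyperbolic signature excluded. These four Euclidean signatures correspond exactly to $L(\tfrac12,\tfrac12,-\tfrac12,-\tfrac12)$, $L(\tfrac12,-\tfrac13,-\tfrac16)$, $L(\tfrac12,-\tfrac14,-\tfrac14)$, and $L(\tfrac23,-\tfrac13,-\tfrac13)$ (up to mirror image), since one computes $\sum_i(1-1/q_i) = 2$ in each case. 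I expect this normal-surface bookkeeping to consume almost all of the work.

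Finally I would close the argument by checking the exceptions directly. Each of the four links is verified to be non-hyperbolic: for instance $L(\tfrac12,\tfrac12,-\tfrac12,-\tfrac12)$ is the closure of a tangle containing $Q_2$, hence toroidal by Proposition \ref{prop: toroidal tangle contains Q2 with torus standard}, and the three length-$3$ links fiber as Seifert spaces over Euclidean base orbifolds. Conversely, any non-torus Montesinos link not on the list has base orbifold of negative Euler characteristic, so $E(K)$ meets all four of Thurston's conditions and is hyperbolic. The genuinely technical step, and the main obstacle, is the surface-theoretic argument in the previous paragraph ruling out essential tori outside the Euclidean signatures.
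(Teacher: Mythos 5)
Your proposal cannot be checked against an internal argument, because the paper does not prove this statement at all: it is quoted directly from Oertel \cite[Corollary 5]{Oertel}, where it follows from his classification of incompressible surfaces in star link complements. Judged on its own, your sketch has the right outer frame (Thurston for Haken manifolds, plus Burde--Murasugi to identify Seifert fibered link exteriors with generalized torus links -- note that ``torus link'' here must be read in the generalized sense the paper flags after the theorem, allowing cores of the solid tori), but the central step contains a genuine error: hyperbolicity of $E(K)$ is \emph{not} equivalent to the condition $\sum_i (1-1/q_i) > 2$ on the $q$-signature, and the four Euclidean signatures do \emph{not} ``correspond exactly'' to the four exceptional links. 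A Montesinos link of length at least $3$ is determined by the fractions $r_i$ mod $1$ \emph{together with} the Euler number $e(K)=\sum r_i$, so each signature is realized by infinitely many inequivalent links. For instance $K(\frac12,\frac12,\frac12,\frac12)$ has Euclidean signature $(2,2,2,2)$ but $e=2$; it is not equivalent to $L(\frac12,\frac12,-\frac12,-\frac12)$ (which has $e=0$), it is not a torus link, and it is in fact hyperbolic (it is a $4$-chain link), so your criterion would misclassify it. The failure goes both ways: $K(\frac12,\frac12,\frac12)$ has spherical signature $(2,2,2)$ yet is the hyperbolic $3$-chain link, so ``spherical signature $\Rightarrow$ torus link'' is also false. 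The correct characterization of the exceptional list is Euclidean signature \emph{and} $e(K)=0$ (check: all four listed links and their mirrors satisfy $\sum r_i = 0$), and detecting the vanishing of $e$ is precisely where Oertel's surface analysis does its real work -- his classification shows closed essential surfaces, in particular essential tori, occur only subject to this Euler-number constraint, which your normal-surface bookkeeping as described never sees.

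A secondary error is your verification of the exceptions: you assert the three length-$3$ exceptional links ``fiber as Seifert spaces over Euclidean base orbifolds.'' If their exteriors were Seifert fibered, Burde--Murasugi would make them generalized torus links, and Oertel would not need to list them separately; what is true is that their \emph{double branched covers} are flat Seifert manifolds over the Euclidean orbifolds $S^2(3,3,3)$, $S^2(2,4,4)$, $S^2(2,3,6)$ (equivalently, the $\pi$-orbifolds $(S^3,K)$ are Euclidean), while the link exteriors themselves are toroidal, not Seifert fibered. Your identification of ``the base $2$-orbifold of the Seifert structure carried by $(S^3,K)$'' conflates the pair $(S^3,K)$ with its double branched cover, and this conflation is the root of both problems above. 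To repair the proof you would need to carry out Oertel's actual classification of (closed) incompressible surfaces in star link exteriors, keeping track of $e(K)$ throughout, rather than reducing to the signature alone.
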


It should be noted that the term ``torus link'' in the above theorem and in Bonahon-Seibenmann's unpublished manuscript \cite{BS} is not a torus link in the usual sense that it lies on a trivial torus $F$ in $S^3$; instead, it may contain one or both of the cores of the solid tori bounded by $F$.  Bonahon and Seibenmann have given a classification of Montesinos links which are torus links in the above sense.  
For the convenience of the reader, these include $K(1/2, -1/2, 1/q)$ for $q\neq 0$, $K(1/4, -1/2, 1/3)$, and torus knots $(3,4)\equiv K(-1/3,-1/2,1/3)$ and $(3,5)\equiv K(-1/5,1/2,-1/3)$.   Note that $K(1/4, -1/2, 1/3)$ is equivalent to the torus knot $(2,3)$ union the axis linking this torus knot three times.  
See  \cite[Theorem A.8, Appendix]{BS} and Figure \ref{fig: two possibilities}.  

\begin{figure}[hbtp]
\centering
\includegraphics[width=4in]{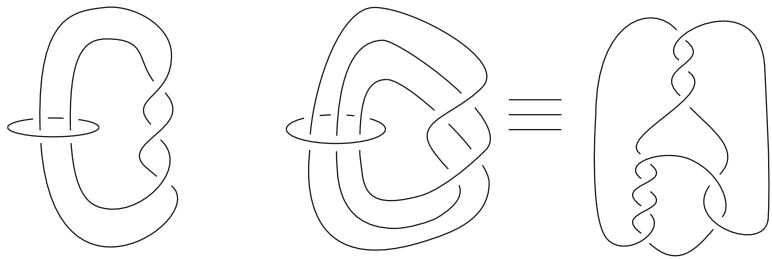}
\caption{On the left is $K(1/2, -1/2, 1/q)$ for $q=3$. On the right is $K(1/4, -1/2, 1/3)$.}
\label{fig: two possibilities}
\end{figure}

The following theorem determines all non-hyperbolic arborescent links
of length at least 4.  Together with the above theorem of Oertel, it
gives an alternative proof of the classification theorem of Bonahon
and Seibenmann for non-hyperbolic arborescent links.  Futer and
Gueritaud \cite{FuterGueritaud} have recently given another proof of
Bonahon-Seibenmann's Theorem using angled structures.

Recall that $Q_2$ denotes the tangle consisting of two vertical arcs
and two horizontal circles.  See Subsection \ref{subsection: std torus and annulus}.

\begin{thm} {\rm (Bonahon-Seibenmann)} 
Suppose $L$ is a large arborescent link.  Then $L$ is non-hyperbolic if and only if it contains $Q_2$.
\label{thm: Bonahon-Seibenmann large arborescent links}
\end{thm}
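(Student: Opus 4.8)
The plan is to reduce the statement about the large arborescent link $L$ to the tangle-complement results already established in Section \ref{section: Hyperbolicity of complements}. Since $L$ is large, by Definition \ref{defin: arborescent links in terms of tangles} there is a Conway sphere $S$ cutting $S^3$ into two non-rational arborescent tangles, so $S^3 - L = X(T_1) \cup_P X(T_2)$ where $P = S - L$ is the twice-punctured-sphere (four-punctured sphere) gluing surface. First I would prove the easy direction: if $L$ contains $Q_2$, then the standard torus in $Q_2$ is an essential torus in $X(T_i)$ by Proposition \ref{prop: toroidal tangle contains Q2 with torus standard}, and I would argue it remains essential (incompressible and not boundary-parallel) in the closed manifold $S^3 - L$, so $L$ is toroidal and hence non-hyperbolic. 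The subtlety here is checking that compressing or boundary-compressing the torus cannot happen after gluing on the other tangle, which should follow from the irreducibility (Proposition \ref{prop: arb tangle complements irred}) and incompressibility of $\bdy X(T_i) = P$ (Proposition \ref{prop: P essential and bdy sphere incompressible}).

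For the hard direction, I would assume $L$ is non-hyperbolic and produce a $Q_2$. The natural approach is to take an essential surface $F$ (sphere, disk, annulus, or torus) in $S^3 - L$ and isotope it to meet the gluing four-punctured sphere $P$ in a minimal, nice collection of curves. Since $S^3 - L$ is a closed-cusped manifold, a reducing sphere would contradict primeness; I would invoke Theorem \ref{thm: large arborescent link is prime} (stated as forthcoming in the excerpt) to rule out essential spheres, and $L$ being a link rules out essential disks. So the essential surface must be an essential torus $F$. The key step is the cut-and-paste: arranging $F \cap P$ to be essential circles on both $F$ and $P$, so that $P$ cuts $F$ into pieces lying in $X(T_1)$ and $X(T_2)$. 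Each such piece is a planar surface with boundary on $P$; by the same innermost/outermost arguments used throughout Section \ref{section: Hyperbolicity of complements}, the minimality of $|F \cap P|$ forces each piece to be an essential annulus of type II in its $X(T_i)$.

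From there I would apply Proposition \ref{prop: essential annulus is standard}: an essential type-II annulus in $X(T_i)$ forces $T_i = Q_{m_i} \ast T_i'$ with the annulus standard in $Q_{m_i}$. Reassembling $F$ across $P$, the annular pieces in $T_1$ and $T_2$ glue along $P$ to recover the torus $F$, and their standard-annulus structure combines (using the product/switching manipulations of Lemma \ref{lem: can rewrite tangle as sum with Qn} and the identity $Q_{m_1} \ast Q_{m_2} = Q_{m_1 + m_2}$) to exhibit $F$ as a standard torus in a $Q_m$ with $m \geq 2$ sitting inside $L$; in particular $L$ contains $Q_2$. The alternative case, where $F \cap P = \emptyset$, places $F$ entirely in one $X(T_i)$, and then Proposition \ref{prop: toroidal tangle contains Q2 with torus standard} directly gives a standard torus in some $Q_m$, $m \geq 2$, inside $T_i$ and hence inside $L$.

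The main obstacle I expect is the reassembly/counting argument across the Conway sphere: I must show that when the essential torus $F$ genuinely crosses $P$, the type-II annular pieces on the two sides match up coherently so that the ``circle count'' $m_1 + m_2$ is at least $2$ and the glued object is honestly a standard torus in a $Q_m$ rather than some more complicated configuration. This requires care that no annular piece is a ``throughgoing'' annulus of the wrong type and that the parallelism data from the $A_i$'s in Lemma \ref{lem: Qn annulus standard} survives the gluing; controlling the number of intersection curves via minimality and the prohibition in Lemma \ref{lem: can't have intersections of types I and II} against mixing type-I and type-II curves is the technical heart of the proof.
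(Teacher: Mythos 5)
There is a genuine gap in the hard direction: you conflate ``$S^3-L$ is non-hyperbolic'' with ``$S^3-L$ contains an essential sphere, disk, annulus, or torus,'' and then silently drop the annulus case, concluding that ``the essential surface must be an essential torus.'' For a cusped link exterior this is false as a logical reduction. The four-surface criterion is the \emph{definition} of hyperbolicity used for tangle complements with higher-genus boundary, but for a manifold whose boundary consists of tori the correct statement (Thurston's hyperbolization for Haken manifolds, as quoted in the paper's proof) is: irreducible, atoroidal, and \emph{not Seifert fibered} implies hyperbolic. A Seifert fibered link exterior --- e.g.\ a torus knot exterior --- is irreducible and atoroidal yet non-hyperbolic; it carries essential annuli but no essential torus, so your argument would terminate without producing a $Q_2$ and without reaching a contradiction. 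The paper handles exactly this residual case in Lemma \ref{lem: Non Seifert fibered}: if $E(L)$ were Seifert fibered, the four-punctured Conway sphere $F = S \cap E(L)$ is shown to be essential, hence horizontal (vertical surfaces being annuli, tori, or Klein bottles), and then an $I$-bundle argument --- filling in the strings on one side would yield an $I$-bundle over $\mathbb{RP}^2$ where a $3$-ball must sit --- gives a contradiction. Your proposal has no counterpart to this step; either you must supply it, or you must fully analyze essential annuli in $E(L)$ directly, which the paper deliberately avoids.

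The rest of your outline tracks the paper closely and is sound. The easy direction (standard torus of $Q_2$ essential or compressible-hence-reducible, using irreducibility as in Lemma \ref{lem: arb link comp irreducible}) matches the paper, as does your torus analysis: spheres are excluded by irreducibility, and the cut-and-paste of an essential torus $F$ along the Conway sphere, with minimality forcing the pieces to be essential type II annuli, Proposition \ref{prop: essential annulus is standard} giving $T_i = Q_{m_i} \ast T_i'$, and the nested/non-nested count ruling out $|F \cap S| > 2$, is precisely the paper's Lemma \ref{lem: toroidal L contains Q2} --- you even correctly identified the reassembly count as the technical heart of that lemma. One smaller caution: you invoke Theorem \ref{thm: large arborescent link is prime} to rule out essential spheres, but in the paper that theorem is proved \emph{using} irreducibility of $X(L)$ (Lemma \ref{lem: arb link comp irreducible}); to avoid circularity you should prove irreducibility directly from the tangle-level results (incompressibility of $S - L$ via Proposition \ref{prop: P essential and bdy sphere incompressible}, then Lemma \ref{lem: boundary reducing disk with empty intersection AND sphere} and Proposition \ref{prop: arb tangle complements irred}), as the paper does.
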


\begin{proof}  
A useful version of Thurston's Hyperbolization Conjecture is stated in a survey paper by Allen Hatcher \cite{Hatcher1} as follows: \emph{The interior of every compact irreducible atoroidal non-Seifert-fibered 3-manifold whose boundary consists of tori is hyperbolic.}  Thurston's Hyperbolization Conjecture has been proved for Haken manifolds and since the exterior $E(L)$ of a link in $S^3$ has non-empty boundary, it is either reducible or Haken.
If $L$ contains $Q_2$ then either the standard torus $F$ in $Q_2$ is essential in $E(L)$ and hence $E(L)$ is toroidal, or $F$ is compressible, in which case $E(L)$ is reducible; in either case $E(L)$ is non-hyperbolic.  Note that the exterior is irreducible and atoroidal if and only if $X(L)$ is irreducible and atoroidal.  Therefore we need only show that if $L$ is a large arborescent link and if it does not contain $Q_2$ then the complement of $L$ is irreducible and atoroidal, and the exterior is not a Seifert fibered space.  These will be proved in Lemmas \ref{lem: arb link comp irreducible}, \ref{lem: toroidal L contains Q2} and \ref{lem: Non Seifert fibered} below.
\end{proof}

\begin{lem}
 Large arborescent link complements are irreducible.
\label{lem: arb link comp irreducible}
\end{lem}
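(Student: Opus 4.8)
The plan is to use the tangle-decomposition structure established earlier, together with irreducibility of arborescent tangle complements (Proposition~\ref{prop: arb tangle complements irred}). First I would recall that a large arborescent link $L$ is, by Definition~\ref{defin: arborescent links in terms of tangles}, obtained by gluing two nonrational arborescent tangles $(B_1,T_1)$ and $(B_2,T_2)$ along their Conway spheres; equivalently, $L$ sits in $S^3$ with a Conway sphere $\Sigma$ splitting $(S^3,L)$ into the two tangles. The complement $X(L)=S^3-L$ is thus obtained by gluing $X(T_1)$ and $X(T_2)$ along the four-punctured sphere $S=\Sigma-L=\bdy B_i - T_i$. The goal is to show $X(L)$ is irreducible, i.e. every embedded $2$-sphere bounds a $3$-ball.

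The key step is a standard innermost-circle argument on $S$. Suppose $S'$ is an essential sphere in $X(L)$; I would isotope it to meet the Conway sphere $S$ transversely and minimally. By Proposition~\ref{prop: P essential and bdy sphere incompressible}, the four-punctured sphere $S$ (which is exactly $\bdy X(T_i)$, and which plays the role of $P(\Delta)$ after a splitting) is incompressible in each $X(T_i)$, so any innermost circle of $S' \cap S$ on $S'$ bounds a disk in $S'$ whose boundary is inessential on $S$; this lets me surger $S'$ and reduce $|S' \cap S|$, contradicting minimality unless $S' \cap S = \emptyset$. Once $S'$ is disjoint from $S$, it lies entirely in one $X(T_i)$. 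But $X(T_i)$ is irreducible by Proposition~\ref{prop: arb tangle complements irred}, so $S'$ bounds a ball there, hence in $X(L)$.

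The main obstacle I anticipate is bookkeeping about which ambient manifold we are in and making the incompressibility statement apply cleanly: Proposition~\ref{prop: P essential and bdy sphere incompressible} is phrased for $P(\Delta)$ inside a tangle sum $X(T)$, whereas here the Conway sphere separates the whole of $S^3-L$ rather than a ball. I would need to verify that the same innermost-disk surgery works when an innermost circle of $S'\cap S$ on $S$ (rather than on $S'$) is considered — i.e. that a circle inessential on $S$ bounds a disk in one $X(T_i)$ that can be used to isotope $S'$ across $S$. This uses incompressibility of $S$ in each side, which is precisely the content of Proposition~\ref{prop: P essential and bdy sphere incompressible} (applied with $S$ as the separating punctured sphere) together with the observation that both tangles are nonrational so $S$ is genuinely incompressible on both sides. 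A secondary care point is that $S'$ may have components of intersection that are trivial on $S$ but link the punctures; after surgery one must confirm the resulting sphere is still embedded and that the count strictly drops, which is routine but worth stating.

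\begin{proof}
Let $L = T_1 \cup_\Sigma T_2$ be a large arborescent link, where $\Sigma$ is a Conway sphere cutting $(S^3,L)$ into two nonrational arborescent tangles $(B_i,T_i)$, and set $S = \Sigma - L$, a four-punctured sphere. Then $X(L) = X(T_1) \cup_S X(T_2)$. By Proposition~\ref{prop: P essential and bdy sphere incompressible}, $S$ is incompressible in each $X(T_i)$. Let $S'$ be any $2$-sphere in $X(L)$, isotoped to meet $S$ transversely with $|S' \cap S|$ minimal. The intersection $S' \cap S$ is a collection of circles. If this collection is nonempty, consider an innermost circle $\gamma$ on $S'$, bounding a disk $D \subset S'$ with $\Int D \cap S = \emptyset$; thus $D \subset X(T_i)$ for some $i$ and $\gamma = \bdy D$ is inessential on $S$ in $X(T_i)$. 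Since $S$ is incompressible in $X(T_i)$, $\gamma$ bounds a disk $D' \subset S$, and the sphere $D \cup D'$ can be isotoped across $D'$ to remove $\gamma$ (and possibly more components) from $S' \cap S$, contradicting minimality. Hence $S' \cap S = \emptyset$, so $S'$ lies entirely in one $X(T_i)$. As each tangle complement is irreducible by Proposition~\ref{prop: arb tangle complements irred}, $S'$ bounds a $3$-ball in $X(T_i) \subset X(L)$. Therefore $X(L)$, equivalently $E(L)$, is irreducible.
\end{proof}
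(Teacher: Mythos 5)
Your overall strategy is the same as the paper's: split $X(L)$ along the Conway sphere, use Proposition~\ref{prop: P essential and bdy sphere incompressible} to get incompressibility of the four-punctured sphere $S$ in each $X(T_i)$ (valid since both tangles are nonrational, hence nontrivial sums), push a sphere off $S$, and finish with Proposition~\ref{prop: arb tangle complements irred}. The only structural difference is that the paper does not inline the intersection-reduction argument: it cites Lemma~\ref{lem: boundary reducing disk with empty intersection AND sphere} to produce an essential sphere disjoint from $S$, then derives the contradiction.

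Your inlined reduction step, however, has a real gap at exactly the point you flagged in your discussion and then glossed over in the proof. You take $\gamma$ innermost on $S'$, obtain $D \subset S'$ lying in one $X(T_i)$, correctly conclude from incompressibility that $\gamma$ bounds a disk $D' \subset S$, and then assert that ``the sphere $D \cup D'$ can be isotoped across $D'$ to remove $\gamma$.'' Choosing $\gamma$ innermost on $S'$ gives no control over $\Int D'$: it may contain further circles of $S' \cap S$, so sheets of $S'$ may lie inside the ball bounded by $D \cup D'$ (whose very existence already uses irreducibility of $X(T_i)$), and sweeping $D$ across that ball is not an isotopy of the embedded sphere $S'$. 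The standard repairs are: (a) simply invoke Lemma~\ref{lem: boundary reducing disk with empty intersection AND sphere}, as the paper does; or (b) pick a circle of $S' \cap S$ bounding a disk $D'' \subset S$ with $\Int D'' \cap S' = \emptyset$ (such a circle exists: every circle of $S' \cap S$ lying in $\Int D'$ bounds a disk inside $D'$, so take an innermost one), surger $S'$ along $D''$ into two spheres each meeting $S$ in strictly fewer circles, and argue by induction on $|S' \cap S|$ that if both surgered spheres bound balls then so does $S'$. Note that option (b) is a surgery, not an isotopy, so it does not produce a sphere isotopic to $S'$; your ``contradicting minimality'' framing must then be replaced by this induction (or by the essential-sphere framing of Lemma~\ref{lem: boundary reducing disk with empty intersection AND sphere}). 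With either repair your proof matches the paper's; as written, the isotopy claim is the one step that would fail.
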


\begin{proof}
Suppose $F$ is an essential sphere in $X(L)$ for a large arborescent link.   Let $L=T_1 \cup_S T_2$, where $T_1$ and $T_2$ are each arborescent tangles of length $\geq 2$, and $S$ is a Conway sphere.  By Proposition \ref{prop: P essential and bdy sphere incompressible}, $S-T = \bdy X(T_i)$ is incompressible in both $X(T_1)$ and $X(T_2)$. Therefore, $S-T$ is incompressible in $X(L) = S^3 - L$.  Hence we can apply Lemma \ref{lem: boundary reducing disk with empty intersection AND sphere}; i.e. we can find an essential sphere $F'$ which does not intersect $S$.  Hence $F' \cont X(T_i)$ for $i=1$ or $2$, however this contradicts Proposition \ref{prop: arb tangle complements irred}.
\end{proof}

\begin{lem}
\label{lem: toroidal L contains Q2}
If $L$ is a large arborescent link, then $X(L)$ contains an essential
torus if and only if $L$ contains $Q_2$.  Furthermore, the essential
torus is standard in $Q_m$ for some $m \geq 2$.
\end{lem}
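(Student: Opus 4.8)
The plan is to prove both implications, splitting the forward direction according to how an essential torus meets the Conway sphere. Write $L = T_1 \cup_S T_2$, where $S$ is the defining Conway sphere bounding balls $B_1, B_2$ with each $(B_i, T_i)$ a nonrational arborescent tangle, and set $P = S - L$, so that $X(L) = X(T_1) \cup_P X(T_2)$ with $P = \bdy X(T_1) = \bdy X(T_2)$ the four-punctured sphere. For the \emph{if} direction I would argue as in the first paragraph of the proof of Proposition \ref{prop: toroidal tangle contains Q2 with torus standard}: if $L$ contains $Q_2$, then in particular it contains $Q_m$ for some $m \geq 2$; let $F$ be the standard torus in $Q_m$. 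The solid torus $V$ bounded by $F$ has the $m \geq 2$ circle components of $L$ as cores, so a meridian of $V$ meets $L$ in $m \geq 2$ points and $F$ is incompressible in $V - L$; a compression of $F$ on the outside of $V$ would yield a reducing sphere, contradicting irreducibility of $X(L)$ (Lemma \ref{lem: arb link comp irreducible}), and $F$ is not boundary-parallel since it does not cut off a cusp. Hence $F$ is essential and standard in $Q_m$.

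For the \emph{only if} direction, let $F$ be an essential torus in $X(L)$. As in the proof of Lemma \ref{lem: arb link comp irreducible}, Proposition \ref{prop: P essential and bdy sphere incompressible} shows that $P$ is incompressible in $X(L)$. I would isotope $F$ to make $|F \cap P|$ minimal; then $F \cap P$ is a union of circles, rendered essential on both surfaces by the usual innermost-disk argument using incompressibility of $P$ and $F$. These circles are all of one type by the reasoning of Lemma \ref{lem: can't have intersections of types I and II}, and type I is impossible: by Proposition \ref{prop: type I annulus} the resulting type I annulus pieces of $F$ are inessential, so either $|F \cap P|$ is not minimal (a type I-A piece is parallel across $P$) or $F$ cuts off a cusp (type I-B pieces are neighborhood annuli), contradicting essentiality. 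Thus all circles are type II. If $F \cap P = \emptyset$, then $F \cont X(T_i)$ for $i = 1$ or $2$, and Proposition \ref{prop: toroidal tangle contains Q2 with torus standard} shows that $T_i$, hence $L$, contains some $Q_m$ with $m \geq 2$ and $F$ is standard in $Q_m$.

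The remaining case is $F \cap P \neq \emptyset$. On the four-punctured sphere $P$ any two disjoint type II circles are parallel, so $F \cap P$ consists of circles all parallel to a single curve $\alpha \cont P$ separating $P$ into two twice-punctured disks; these cut $F$ into annuli that alternate between $X(T_1)$ and $X(T_2)$, at least one on each side. Each such annulus is a type II annulus, and by Lemma \ref{lem: three parts to show that essential torus is on one side or other for bpann}(1) an inessential one would be boundary-parallel and could be isotoped across $P$ to reduce $|F \cap P|$; minimality therefore makes every piece essential. Hence each $T_i$ is $\alpha$-annular, and Lemma \ref{lem: can rewrite tangle as sum with Qn} lets me write $T_i = Q_{n_i} \ast_{\Delta_i} T_i'$ with $\bdy \Delta_i = \alpha$, $T_i'$ being $\partial\Delta_i$-anannular, and $n_i \geq 1$ (since $\alpha$-annular forces $n_i \geq 1$). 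Recombining across $S$ by switching the $Q_{n_i}$ to be adjacent to $S$, exactly as in the computation in the proof of Lemma \ref{lem: can rewrite tangle as sum with Qn}, merges the two pieces into $Q_{n_1 + n_2}$ straddling $S$; thus $L$ contains $Q_{n_1 + n_2}$ with $n_1 + n_2 \geq 2$, and the standard annuli assembling $F$ (Proposition \ref{prop: essential annulus is standard}) reassemble into the standard torus in $Q_{n_1 + n_2}$.

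The main obstacle is this nonempty-intersection case. One must carefully confirm that the intersection circles are genuinely non-peripheral type II and mutually parallel, that the resulting pieces of $F$ really are essential type II annuli on which Proposition \ref{prop: essential annulus is standard} and Lemma \ref{lem: can rewrite tangle as sum with Qn} can be applied, and --- the most delicate point --- that the switching and recombination correctly identify the reassembled standard annuli with the \emph{standard} torus of $Q_{n_1 + n_2}$, reducing in particular to a single pair of intersection circles and verifying $n_1, n_2 \geq 1$, rather than with some other torus crossing $S$.
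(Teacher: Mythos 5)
Your architecture for the forward direction is sound and matches the paper's up to the decisive case, but that case --- $F \cap P \neq \emptyset$ with $|F \cap P| > 2$ --- is exactly where your proposal stops short of a proof. You correctly reduce to the situation where all intersection circles are type II and parallel to a single curve $\alpha$, every annulus piece of $F$ is essential by minimality, each $T_i$ is $\alpha$-annular, and Lemma \ref{lem: can rewrite tangle as sum with Qn} yields $T_i = Q_{n_i} \ast T_i'$ with $n_i \geq 1$. But when $F$ is cut into more than two annuli, the claim that these pieces ``reassemble into the standard torus in $Q_{n_1+n_2}$'' does not follow from anything you have established, and you explicitly defer it (``one must carefully confirm \dots reducing in particular to a single pair of intersection circles'') without supplying an argument. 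This is precisely where the paper's proof does its real work: applying Proposition \ref{prop: essential annulus is standard} to each piece, it observes that if the annuli on both sides of $S$ are all nested, then gluing the pieces along $S$ produces a disconnected surface, contradicting that $F$ is a torus; otherwise some side contains a non-nested innermost piece $A_1$, standard for some $Q_{m_1}$, and the closed components of $Q_{m_1}$ can be pulled past $S$ into the other ball, strictly reducing $|F \cap S|$ --- with the existence of a further annulus $A_3$ on the same side guaranteeing the new decomposition still splits $L$ into two nonrational tangles. This forces $n=2$, after which gluing the two standard annuli gives the standard torus in $Q_{m_1+m_2}$.

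There is also a structural mismatch that would block you from completing the argument along these lines: you fixed $S$ as ``the defining Conway sphere'' and minimized $|F \cap P|$ over isotopies of $F$ alone, whereas the paper minimizes $|S \cap F|$ over the \emph{choice} of Conway sphere. The reduction move above is an isotopy of the decomposing sphere relative to the fixed link, not an isotopy of $F$: with $S$ fixed, the solid torus between the innermost standard annulus $A_1$ and the annulus it cobounds on $S$ contains circle components of $L$, so $F$ cannot be isotoped across it, and your minimality hypothesis yields no contradiction. So the gap is not merely an omitted verification; the quantifier in your minimality assumption is the wrong one for closing the $|F \cap P| > 2$ case. (A minor further point: in the ``if'' direction you assert $F$ ``does not cut off a cusp''; the paper actually verifies this on both sides --- $V$ contains two core circles, and any meridian disk of the complementary solid torus $W$ meets $L$ in two points --- and that check is worth keeping.)
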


\begin{proof}
  If $X(L)$ contains $Q_2$ then let $F$ be the standard torus for
  $Q_2$, with $V$ the solid torus bounded by $F$ intersecting $L$ in 2
  core circles of $F$.  Suppose $F$ is compressible.  It cannot be
  compressible on the side containing the two core curves, so suppose
  it is compressible on the other side.  Compressing along a
  compressing disk $D$ gives a sphere.  This sphere bounds a ball
  which intersects the tangle in two disjoint circles, and there are
  also some components of the link outside of the ball; therefore it
  is an essential sphere, contradicting Lemma \ref{lem: arb link comp
    irreducible}.  If $F$ is boundary parallel, then it cuts off a
  cusp.  The cusp cannot be in $\Int (V)$ since there are two core
  circles from $L$ in $V$.  Thus it must be that $F$ cuts off a cusp
  on the other side of $F$.  Suppose $W$ is the solid torus bounded by
  $F$ on the other side.  Note that $W$ contains the two vertical
  string in the definition of $Q_n$, so there is a meridian disk of
  $W$ intersecting $W\cap L$ in two points.  Therefore any meridian
  disk of $W$ must bound a disk intersecting $W\cap L$ an even number
  of times in $W$, hence $F$ cannot bound a cusp on this side either.
  Therefore, $F$ is essential.

  Now suppose $L$ is a large arborescent link such that $X(L)$
  contains an essential torus, $F$.  We may write $L=T_1 +_S T_2$
  where $T_i$ is an arborescent tangle with length $\ell(T_i) \geq 2$
  for $i=1,2$, and $S$ a Conway sphere, chosen so that $S$ intersects
  $F$ transversely, and the number of components, $|S \cap F|$, is
  minimal.  Let $S'$ be the 4-punctured sphere $X(L) \cap S$; note
  that $|S \cap F|=|S' \cap F|$.  If $F \cap S'$ is empty, then $F
  \cont X(T_i)$ for $i=1$ or $2$.  By Proposition \ref{prop: toroidal
    tangle contains Q2 with torus standard}, $T_i$ contains $Q_2$ and
  the torus $F$ is standard in $Q_m$ for some $m \geq 2$, hence $L$
  contains $Q_2$ and $F$ is standard in $Q_m$ also.  Therefore we
  assume that the intersection $F \cap S'$ is nonempty.
 
By Proposition \ref{prop: P essential and bdy sphere incompressible}, $S'$ is incompressible.   Similar to the proof of Lemma \ref{lem: three parts to show that essential torus is on one side or other for bpann} Part (2), each component of $F \cap S'$ is essential in $X(T_i)$ for $i=1,2$; these are annulus components $A_1, ..., A_n \cont F$ with $\bdy A_i$ essential curves on both $F$ and $S'$ (parallel on $F$).   Thus they must be type-II annulus components by Proposition \ref{prop: type I annulus}.  (Note that $n$ is even.)  Without loss of generality, assume that $A_j \cont X(T_1)$ for $j$ odd, and  $A_j \cont X(T_2)$ for $j$ even.  By Proposition \ref{prop: essential annulus is standard}, for odd $j$, we may write $T_1=Q_{m_j} \ast T_j'$ for some $m_j \geq 1$, and $A_j$ the standard annulus in $Q_{m_j}$.  Similarly for even $j$, we may write $T_2=Q_{m_j} \ast T_j'$ for some $m_j \geq 1$, and $A_j$ the standard annulus in $Q_{m_j}$.   

If $n=2$, then $A_1$ is the standard annulus for $Q_{m_1}$ in $B_1$, $m_1 \geq 1$, and $A_2$ is the standard annulus for $Q_{m_2}$ in $B_2$, $m_2 \geq 1$.  Gluing them together, $F$ is the standard torus for $Q_{m_1+m_2}$, and $m_1 +m_2 \geq 2$.  

Suppose, however, that $n >2$, i.e., $|S' \cap F| > 2$.  If the annuli on both sides of $S$ are nested as in Figure \ref{fig: nested annuli}, then numbering the components from the ``inside-out"  we have $1, 2, ..., k-1, k, k, k-1, ..., 2, 1$.  Gluing the $k$ annuli on the $B_1$ side to the $k$ annuli on the $B_2$ side of $S$, we see that $F$ has more than one component, contradicting the fact that $F$ is a torus.  Thus, without loss of generality, the annuli $A_j$ ($j$ odd) in $B_1$ are not all nested (as in Figure \ref{fig: not nested annuli}).  Hence some $A_j$, say $A_1$, is an ``innermost" annulus and we may isotope the annulus $A_1$ (thus pulling the closed components from $Q_{m_1}$ into $B_2$ past $S$, reducing the number of components in the intersection, $|F \cap S'|$.  Furthermore, since not all the $A_j$ are nested, there is still another annulus, say $A_3$, in $B_1$ which is the standard annulus for $Q_{m_3}$, $m_3 \geq 1$.  This contradicts the fact that we chose $S$ with $|S \cap F|$ minimal.  
\end{proof}

\begin{figure}[hbtp]
\centering
\includegraphics[width=2.5in]{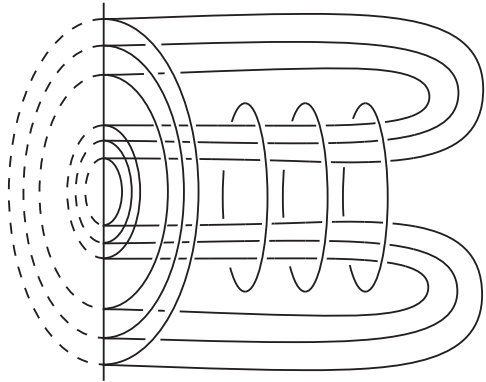}
\put(-175,178){$S$}
\caption{Nested annuli in the proof of Lemma \ref{lem: toroidal L contains Q2}.}
\label{fig: nested annuli}
\end{figure}

\begin{figure}[hbtp]
\centering
\includegraphics[width=2.5in]{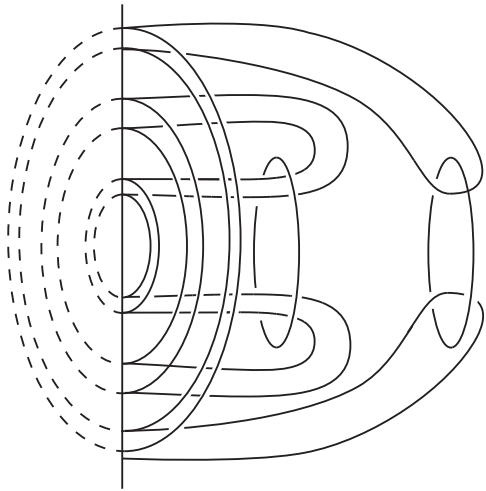}
\put(-168,225){$S$}
\caption{Non-nested annuli for the proof of Lemma \ref{lem: toroidal L contains Q2}.}
\label{fig: not nested annuli}
\end{figure}

\begin{lem}
If $L$ is a large arborescent link, then $E(L)$ is not Seifert fibered.
\label{lem: Non Seifert fibered}
\end{lem}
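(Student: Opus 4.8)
The plan is to suppose, for contradiction, that $E(L)$ is Seifert fibered and to exploit the Conway sphere. Write $L = T_1 \cup_S T_2$ with each $T_i$ a non-rational arborescent tangle, and let $S' = S \cap E(L)$, a $4$-holed sphere with $\chi(S') = -2$ that separates $E(L)$ into the two tangle exteriors $E(T_1)$ and $E(T_2)$. First I would check that $S'$ is essential in $E(L)$: it is incompressible because $S$ is a Conway sphere and $\partial X(T_i)$ is incompressible by Proposition \ref{prop: P essential and bdy sphere incompressible} (together with irreducibility of $E(L)$, which follows from Lemma \ref{lem: arb link comp irreducible}); it is not $\partial$-parallel, since a $4$-holed sphere cannot be isotoped into a boundary torus; and it is $\partial$-incompressible, because a $\partial$-compression would either compress $S$ or trivialize it as a Conway sphere.

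Next I would invoke the standard dichotomy for essential surfaces in a Seifert fibered space: $S'$ is isotopic either to a horizontal or to a vertical surface. A vertical surface in an orientable Seifert fibered space is a torus or an annulus, hence has $\chi = 0$; since $\chi(S') = -2$, the surface $S'$ must be horizontal. In particular $\partial S'$, which consists of four meridians of $L$, is transverse to the Seifert fibers on the boundary tori it meets, so on those tori the meridian is not the fiber slope.

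The key step, and the main obstacle, is to rule out a \emph{separating} horizontal surface. I would do this by Dehn filling $E(L)$ along all meridians of $L$ to recover $S^3$. On every boundary torus the meridian differs from the fiber slope: on the tori met by $S'$ this holds by horizontality, and on the remaining tori it holds because $E(L)$ is irreducible and is not a solid torus (it contains the essential surface $S'$), so filling along a fiber slope would compress a regular fiber and create a reducing sphere. Consequently each filling extends the Seifert fibration, exhibiting $S^3$ as a Seifert fibered space whose base orbifold $\widehat{B}$ is the base $B$ of $E(L)$ with its boundary circles capped off by disks. But every Seifert fibration of $S^3$ has orientable base (the underlying surface is $S^2$), since a closed $2$-orbifold with trivial orbifold fundamental group cannot be non-orientable; hence $\widehat{B}$, and therefore $B$, is orientable.

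Finally, over an orientable base the regular fiber $h$ is non-trivial in $H_1(E(L);\mathbb{Q})$, and a horizontal surface meets every fiber with the same sign, so the algebraic intersection satisfies $[S']\cdot[h] = \pm n \neq 0$, where $n$ is the degree of the covering $S' \to B$. Thus $[S']$ is non-zero in $H_2(E(L), \partial E(L))$ and $S'$ is non-separating, contradicting the fact that $S'$ separates $E(T_1)$ from $E(T_2)$. This contradiction shows that $E(L)$ is not Seifert fibered. I expect the delicate points to be the $\partial$-incompressibility of $S'$ and the verification that no meridian is a fiber slope on the tori disjoint from $S'$; the conceptual heart is excluding a separating horizontal surface, which the filling-to-$S^3$ argument accomplishes by forcing the base orbifold to be orientable.
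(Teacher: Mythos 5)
Your proposal follows the paper exactly through its first half: both take the $4$-holed sphere $S'=S\cap E(L)$, prove it essential, invoke the vertical/horizontal dichotomy for essential surfaces in Seifert fibered spaces, and use $\chi(S')=-2$ to rule out the vertical case. The endgames, however, are genuinely different. The paper stays inside $E(L)$: since $S'$ is horizontal and separating, each complementary piece is an (twisted) $I$-bundle; filling the string neighborhoods back into $E(T_i)$ produces an $I$-bundle whose boundary is a single sphere, hence the twisted $I$-bundle over $\mathbb{RP}^2$, contradicting the fact that this filling is the $3$-ball $B_i$. You instead rule out a separating horizontal surface homologically: pass to $S^3$ by meridian filling, extend the fibration, conclude the base is orientable because Seifert fibrations of $S^3$ have base $S^2$ with cone points, and then compute $[S']\cdot[h]=\pm n\neq 0$, contradicting separation. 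These are two faces of the same phenomenon --- the paper's $\mathbb{RP}^2$ is precisely the nonorientable base that your orientability step excludes --- and your version is conceptually attractive, though it uses heavier input (Dehn filling, orbifold bases, $H_1$ of Seifert fibered spaces) where the paper's argument is elementary and self-contained. Your $\partial$-incompressibility step is also vaguer than the paper's: rather than ``trivialize it as a Conway sphere,'' argue concretely that a $\partial$-compressing disk $D$ has $\beta=D\cap\partial E(L)$ running along $\partial N(t)$ for a string $t$ of some $T_i$, so $t$ is parallel to an arc on $S$ and $T_i$ is rational, contradicting $\ell(T_i)\geq 2$.

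There is one genuine flaw, in the step you yourself flagged: the claim that on tori disjoint from $S'$ the meridian is not the fiber slope because ``filling along a fiber slope would create a reducing sphere.'' As stated this does not close the argument: the reducing sphere lives in an intermediate filling, and essential spheres can become inessential under further filling (fill one component of a split link along its meridian and the splitting sphere bounds a ball). Worse, fiber-slope fillings of some Seifert fibered spaces are reducible with no nontrivial closed summand --- e.g.\ $S^1\times P$ ($P$ a pair of pants) filled along the fiber is $(S^1\times D^2)\,\#\,(S^1\times D^2)$, whose remaining meridian fillings can still yield $S^3$ --- so reducibility alone yields no contradiction. Fortunately the step is unnecessary: a horizontal surface projects to the base as an orbifold covering, in particular surjectively, so $\partial S'$ must meet \emph{every} boundary circle of the base, i.e.\ there are no boundary tori disjoint from $S'$; on each torus the meridian is a boundary curve of the horizontal surface, hence transverse to the fibers, and the fibration extends over all meridian fillings. (Alternatively you can avoid the filling argument entirely: if the base were nonorientable, the preimage of an orientation-reversing loop would be a vertical Klein bottle embedded in $E(L)\subset S^3$, which is impossible since closed embedded surfaces in $S^3$ are separating and hence orientable.) With that repair, your proof is correct.
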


\begin{proof}
Suppose $L=T_1 \cup_S T_2$, where $S$ is a Conway sphere and $\ell(T_i) \geq 2$ for $i=1,2$, and suppose $E(L)$ is Seifert fibered.  Let $F = S \cap E(L)$.  By Proposition \ref{prop: P essential and bdy sphere incompressible}, $F$ is incompressible in $E(T_i)$.  
Suppose $F$ is $\bdy$-compressible.  Then there exists a disk $D \cont E(L)$ such that $\bdy D =\alpha \cup \beta$ where $\alpha= D \cap F$, $\beta = D \cap \bdy E(L)$, and $\alpha$ is
essential in $F$.  Thus, $\beta$ must run along $\bdy N(t)$ for a string $t \in T_i$ for $i=1$ or $2$, and $\alpha$ is an essential arc on $F$.  Hence the string $t$ is parallel to an arc on $S$ and therefore $T_i$ is a rational tangle.  This contradicts the fact that $\ell(T_i) \geq 2$.  Thus $F$ is not $\bdy$-compressible, and $F$ is essential in $M$.  
By \cite[Proposition 1.11]{Hatcher2}, $F$ must be a vertical or horizontal surface in the Seifert-fibered manifold $E(L)$.  Since vertical surfaces can only be annuli, tori, or Klein bottles (see Hatcher \cite{Hatcher2}), $F$ must be a horizontal surface.

Next, notice that $F$ is a separating (horizontal) surface and hence cuts $E(L)$ into I-bundles.  Filling in the regular neighborhood of the strings in $T_i$ for $i=1$ or $2$ (these are simply I-bundles) on one side of $F$ gives an I-bundle with boundary a sphere.  This can only be an I-bundle over $\mathbb{RP}^2$, which has homotopy type the same as $\mathbb{RP}^2$.  On the other hand, filling the regular neighborhood of the strings back into $E(T_i)$ gives a 3-ball.  This is impossible since a 3-ball and $\mathbb{RP}^2$ have different homotopy types.  Hence $E(L)$ cannot be Seifert fibered.
\end{proof}


\begin{thm}
\label{thm: large arborescent link is prime}
Let $L$ be a large arborescent link.  Then $L$ is a non-split prime link. 
\end{thm}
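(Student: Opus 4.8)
The plan is to prove two things separately: that $L$ is non-split (i.e.\ $S^3 - L$ is irreducible, equivalently contains no essential sphere separating the components), and that $L$ is prime (i.e.\ any sphere meeting $L$ transversely in two points bounds a ball meeting $L$ in a single unknotted arc). Non-splitness is essentially already in hand: Lemma~\ref{lem: arb link comp irreducible} shows that large arborescent link complements are irreducible, so $X(L)$ contains no essential $S^2$, and in particular $L$ is non-split. The substantive content is therefore primeness.

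To prove primeness, first I would take a sphere $S_0 \cont S^3$ meeting $L$ transversely in exactly two points, and let $B'$ be one of the two balls it bounds. I want to show $B' \cap L$ (for an appropriate choice of side) is a single unknotted arc. The natural strategy is to reduce to the tangle-level statement already proved in Corollary~\ref{cor: sphere in B}, which says precisely this for a sphere inside a single tangle ball $B$. Writing $L = T_1 \cup_S T_2$ along the Conway sphere $S$, I would isotope $S_0$ to meet $S$ minimally. By Proposition~\ref{prop: P essential and bdy sphere incompressible} the punctured Conway sphere $S - L$ is incompressible in each $X(T_i)$ and hence in $X(L)$; combined with Lemma~\ref{lem: boundary reducing disk with empty intersection AND sphere} (the innermost-circle/outermost-arc machinery), this should let me push $S_0$ off $S$ so that $S_0 \cont X(T_i)$ for $i=1$ or $2$. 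Then $S_0$ is a sphere inside a single arborescent tangle ball meeting $T_i$ in two points, and Corollary~\ref{cor: sphere in B} finishes the argument on that side.

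The main obstacle I anticipate is the disjointness reduction $S_0 \cap S = \emptyset$. The tools in the excerpt (Lemma~\ref{lem: boundary reducing disk with empty intersection AND sphere}) are phrased for reducing disks and spheres against an essential surface $F$, but here $S_0$ itself is punctured by $L$, so the relevant object is the twice-punctured sphere $S_0 - L$ meeting the four-punctured sphere $S - L$ inside $X(L)$. Circles of intersection inessential on $S - L$ can be removed by an innermost-disk argument using incompressibility of $S - L$; the care is in handling circles that are essential on $S - L$ (bounding twice-punctured disks) and in controlling the puncture count so that the piece I isolate in some $X(T_i)$ still meets $T_i$ in exactly two points rather than inheriting extra punctures. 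A parity/Euler-characteristic count on the punctures of $S_0 - L$ as it is cut by $S - L$ should constrain the possibilities, and an essential circle of $S_0 \cap S$ would have to bound a twice-punctured disk on $S_0 - L$ on one side, yielding a sub-sphere meeting $L$ twice that lies in one $X(T_i)$ to which Corollary~\ref{cor: sphere in B} again applies, allowing the intersection to be simplified.

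Once $S_0$ is isolated in a single tangle complement, the conclusion is immediate from Corollary~\ref{cor: sphere in B}: the ball cut off meets $T_i$ in one unknotted arc, so the original $S_0$ bounds a ball meeting $L$ in a single unknotted arc, which is exactly primeness. Thus $L$ is both non-split and prime, completing the proof.
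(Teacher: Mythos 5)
Your proposal is correct and follows essentially the same route as the paper: non-splitness via Lemma~\ref{lem: arb link comp irreducible}, then primeness by isotoping the decomposing sphere to meet the Conway sphere $S$ minimally, using incompressibility of $S-L$ (Proposition~\ref{prop: P essential and bdy sphere incompressible}) to kill trivial circles, and invoking Corollary~\ref{cor: sphere in B} both to reduce the remaining intersections and to rule out the final disjoint case. One small precision the paper supplies where your sketch is vague: rather than tracking essential circles on the annulus $S_0-L$ (note an essential circle there bounds a \emph{once}-punctured disk on each side, not a twice-punctured one), it picks an innermost disk on the decomposing sphere containing at most one puncture and caps it with a once-punctured disk on $S$, which is exactly the parity count you anticipated.
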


\begin{proof} 
Let $L$ be a large arborescent link such that $L=T_1 \cup_S T_2$, where $T_i$ is an arborescent tangle with $\ell(T_i) \geq 2$, and $S$ a Conway sphere.  Suppose $B_i$ is the ball in $S^3$ with $\bdy B_i=S$ and $B_i \cap L=T_i$ for $i=1,2$.  By Lemma \ref{lem: arb link comp irreducible}, $L$ is a non-split link.
We must show that $L$ is prime.  Suppose $L=L_1 \# L_2$, where $L_i$ is nontrivial and let $F$ be a decomposing sphere for $L$ with $L \cap F=\{p_1, p_2\}$.  We may assume that $F$ is transverse to $S$, and that $F \cap S$ consists of simple closed curves.  Furthermore, we assume that $F$ has been chosen so that the number of components $|F \cap S|$ is minimal.  If $|F \cap S|=0$, then $F \cont B_i$ for $i=1$ or $2$.  Then by Corollary \ref{cor: no earrings}, $L_i$ is trivial, a contradiction.  

Suppose $|F \cap S|\neq 0$.  Let $\alpha \in F\cap S$ such that $\alpha$ is an innermost curve on $F$.  Then $\alpha$ bounds an innermost disk $D$ on $F$.  We may choose $\alpha$ so that $D \cap L=\emptyset$ or $D \cap L=p_i$ for $i=1$ or $2$ since $F$ is a sphere.  If $D \cap L=\emptyset$, then we may reduce $|F \cap S|$, contradicting minimality.  If $D$ intersects $L$ in a single point, then $\alpha$ bounds a disk $D'$ on $S$ which also intersects $L$ in a single point.  By Corollary \ref{cor: no earrings}, $D \cup D'$ is a sphere which bounds a ball intersecting $T_1$ in an unknotted string.  Thus we may reduce $|F \cap S|$ by an isotopy, contradicting the minimality of $|F \cap S|$.  Thus it must be the case that $|F \cap S|=0$.
\end{proof}

The Hopf link is an arborescent link since it is simply the boundary of a band with 2 twists, or equivalently, the integral rational tangle $T(2)$ with numerator closure.  By the definition of earring, either closed component can be called an earring of the link.  
However a large arborescent link cannot have an earring.

\begin{cor}
 If $L$ is a large arborescent link then $L$ cannot have earrings.
\label{lem: no earrings, links}
\end{cor}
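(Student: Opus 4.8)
The plan is to argue by contradiction and reduce to the tangle case already handled in Corollary~\ref{cor: no earrings}. Write $L = T_1 \cup_S T_2$ with $\ell(T_i) \ge 2$ and Conway sphere $S$, and suppose $L$ has an earring $t'$ bounding a disk $D \cont S^3$ whose interior meets $L$ transversely in a single point $p$; let $t$ be the component of $L$ through $p$. First I would replace $D$ by the sphere $F = \bdy N(D)$. Since $N(D)$ is a ball containing the entire closed component $t' = \bdy D$ together with a single subarc $\tau \cont t$ crossing $D$ at $p$, the sphere $F$ meets $L$ transversely in exactly two points (both on $t$), and the ball $N(D)$ it bounds satisfies $N(D) \cap L = t' \cup \tau$, which is \emph{not} a single unknotted string.

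The goal is then to isotope $F$ off $S$ into one of the tangle balls $B_i$. I would put $F$ transverse to $S$ with $|F \cap S|$ minimal and run the standard innermost-circle/outermost-arc argument on $F \cap S$, exactly as in the proofs of Lemma~\ref{lem: arb link comp irreducible} and Theorem~\ref{thm: large arborescent link is prime}. Innermost disks of $F \cap S$ on $F$ that miss $L$ are removed using incompressibility of the four-punctured sphere $P = S - L$ (Proposition~\ref{prop: P essential and bdy sphere incompressible}); innermost disks meeting $L$ in a single point are removed using irreducibility of the tangle complements (Proposition~\ref{prop: arb tangle complements irred}) together with Corollary~\ref{cor: sphere in B}, all the while keeping the closed component $t'$ on the $N(D)$ side of $F$. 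Once $F \cap S = \emptyset$, the sphere $F$ lies in a single tangle ball $B_i$, and $N(D)$ is the inner ball it bounds there, namely the one disjoint from $\bdy B_i = S$, which still contains $t'$.

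At that point the contradiction is immediate from Corollary~\ref{cor: sphere in B}: $F \cont B_i$ is a sphere meeting the arborescent tangle $T_i$ in two points, so the inner ball it bounds must meet $T_i$ in a single unknotted string. But that inner ball is $N(D)$, and $N(D) \cap T_i = t' \cup \tau$ contains a closed component. Equivalently, once $F$ is pushed into $B_i$ the disk $D$ exhibits $t'$ as an earring of the \emph{tangle} $T_i$, which is impossible by Corollary~\ref{cor: no earrings}.

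The main obstacle is the middle step, controlling $F \cap S$. The delicate point is that the two points of $F \cap L$ lie on $t$, which may itself cross $S$ several times, so in the link exterior $F \cap S$ contributes not only circles but also spanning arcs, and each surgery must be checked to be disjoint from $t'$ and not to move $t'$ off the $N(D)$ side. This is precisely the bookkeeping carried out in Lemma~\ref{lem: arb link comp irreducible} and the primality theorem, on which I would model the argument. It is also where largeness is genuinely used: the hypothesis $\ell(T_i)\ge 2$ is what makes $P$ incompressible and makes Corollary~\ref{cor: sphere in B} available, consistent with the preceding remark that the Hopf link does carry an earring but fails to be large.
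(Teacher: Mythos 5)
Your overall strategy is the right one, and your first paragraph is exactly how the paper begins: pass from the earring disk $D$ to the ball $N(D)$, whose boundary sphere meets $L$ in two points while $N(D)\cap L = t'\cup\tau$ is not a trivial ball--arc pair. But at that point the paper simply cites Theorem~\ref{thm: large arborescent link is prime} and stops: the entire innermost-circle reduction of $|F\cap S|$ that you re-derive by hand is already packaged inside the proof of that theorem (which is itself your argument: minimality, incompressibility of $S-L$ from Proposition~\ref{prop: P essential and bdy sphere incompressible}, irreducibility from Lemma~\ref{lem: arb link comp irreducible}, and Corollary~\ref{cor: sphere in B} to trivialize the inner ball). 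So the middle two-thirds of your write-up reproves a result you could quote. A small factual correction along the way: since $F$ and $S$ are closed spheres, $F\cap S$ consists only of circles --- the ``spanning arcs'' you worry about cannot occur once transversality keeps the two points of $F\cap L$ and the four points of $S\cap L$ off the intersection --- so the bookkeeping obstacle you flag is not actually there.

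There is, however, a genuine gap in the step you do carry out yourself: the assertion that after $F\cap S=\emptyset$ the inner ball of $F$ in $B_i$ ``is $N(D)$, \ldots which still contains $t'$.'' Nothing in your reductions justifies this. To get it, you would need to check that the $t'$-side of $F$ never swallows the other tangle ball $B_j$: each push is across a ball $W$ bounded by an innermost disk of $F$ and a disk on $S$, and one can show $W$ lies in a single tangle ball and meets $L$ in at most a trivial string, so the $t'$-side never \emph{acquires} $B_j$ --- but you must also rule out $B_j\subset N(D)$ at the start, and you give no argument for that. If the $t'$-side ends up being the \emph{outer} ball, Corollary~\ref{cor: sphere in B} applied to the inner ball yields a single unknotted string and no contradiction at all: this is precisely the configuration in which the outside of $N(D)$ meets $L$ in one trivial arc and nothing else, i.e.\ $L = t\cup t'$ with $t'$ a meridian circle of the knot $t$ (sliding $t'$ down $D$ shows $L$ is the connected sum of the Hopf link with $t$). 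That residual case is exactly what the appeal to Theorem~\ref{thm: large arborescent link is prime} disposes of --- primality forces $t$ to be unknotted, whence $L$ is the Hopf link, which is not large --- and it is why the paper's two-line proof via primality, rather than a re-run of the isotopy machinery with side-tracking, is the cleaner route. Replace your middle section with the citation, or add the missing case analysis; as written, the proof does not close.
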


\begin{proof}
Suppose $L$ is a large arborescent link with an earring. A regular neighborhood of the earring is a ball, $B'$, whose boundary intersects the tangle in two points, but $B' \cap L $ is nontrivial.  This contradicts Theorem \ref{thm: large arborescent link is prime}.
\end{proof}

\noindent
I am indebted to Professor Ying-Qing Wu at The University of Iowa for his guidance, support, and thoughtfulness.

\bibliographystyle{amsplain}
\bibliography{refs}

\end{document}